\newcommand\Argmax{\mathop{\rm argmax}}
\newcommand\bz{\bm{0}}
\newcommand\bE{\mathbb{E}}
\newcommand\bP{\mathbb{P}}
\newcommand\te{\theta}
\newcommand{\norm}[1] {\left \| #1 \right \|} 
\newtheorem{theorem}{Theorem}
\newtheorem{lemma}{Lemma}
\newtheorem{proposition}{Proposition}
\newtheorem{assumption}{Assumption}
\newcommand{\btheta}{\bm{\theta}}
\newcommand{\bmu}{\bm{\mu}}
\newcommand{\bxi}{\bm{\xi}}
\newcommand{\bzeta}{\bm{\zeta}}
\newcommand{\R}{{\bm R}} 
\newcommand{\X}{\bm{X}}
\author{}
\author{ Sina Baghal \thanks{E-mail: srezazadehbaghal@uwaterloo.ca}\and Courtney Paquette \thanks{E-mail: yumiko88@uw.edu; \url{cypaquette.github.io/}} \and Stephen A.~Vavasis \thanks{E-mail: vavasis@uwaterloo.ca; \url{https://www.math.uwaterloo.ca/\~vavasis}} }
\title{A termination criterion for stochastic gradient descent for binary classification \thanks{Department of Combinatorics and Optimization, University of Waterloo, Waterloo, ON, N2L 3G1, Canada; Research of Paquette was supported by
NSF DMS award 1803289 (Postdoctoral Fellowship) and research of
Vavasis was supported in part by an NSERC (Natural Sciences and
Engineering Research Council of Canada) Discovery Grant.} }
\date{\vspace{-5ex}}
\begin{document}
\maketitle

\begin{abstract}

We propose a new, simple, and computationally inexpensive termination test for constant step-size stochastic gradient descent (SGD) applied to binary classification on the logistic and hinge loss with homogeneous linear predictors. Our theoretical results support the effectiveness of our stopping criterion when the data is Gaussian distributed. This presence of noise allows for the possibility of non-separable data. We show that our test terminates in a finite number of iterations and when the noise in the data is not too large, the expected classifier at termination nearly minimizes the probability of misclassification. Finally, numerical experiments indicate for both real and synthetic data sets that our termination test exhibits a good degree of predictability on accuracy and running time.

\end{abstract}
\section{Introduction}
Minimization of an expected loss objective function using linear predictors,
\begin{equation} \label{eq:intro_expected_loss} \min_{\btheta \in \R^d} f(\btheta) := \bE_{(\bzeta,y) \sim \mathcal{P}} \ell(\bzeta^T\btheta, y), \end{equation}
is a central task in machine learning. Here the loss function $\ell: \R \times \R \to \R$, the probability distribution $\mathcal{P}$ is unknown, and the data sample $(\bzeta, y) \in \R^d \times \R$ is a random vector distributed as $\mathcal{P}$. The most prevalent algorithm employed for solving \eqref{eq:intro_expected_loss} is \textit{stochastic gradient descent} (SGD). Whereas a significant amount of work has been devoted to the convergence analysis of SGD (see, {\em e.g.}, \cite{RM1951,Curtis_SGD,Bubeck_Convex_book, Pflug}), leading, in particular, to learning rate schedules, the question of how to terminate the algorithm when one is near an optimal classifier remains largely unaddressed.

 Yet, inexpensive stopping criteria are of utmost interest in machine learning. For instance, if one could produce a low cost test to determine near-optimality, then without sacrificing the quality of the solution or efficiency of the SGD algorithm, needless computational time would be eliminated. Secondly, early termination tests impose a degree of predictability on accuracy and running times-- a useful quality when SGD occurs as a subproblem of a larger computation. Several works show that early termination of SGD can prevent overfitting, speed up learning procedures, and/or improve generalization properties \citep{Prechelt2012, Hardt:2016:TFG:3045390.3045520,Yao2007}. Motivated by these facts, we sought to address from stochastic optimization the following question:
\begin{center}
    How to design a test to terminate SGD with a fixed learning rate that is inexpensive without sacrificing quality of the solution?
\end{center}
To do so, we simplified our setting to binary classification, one of the fundamental examples of supervised machine learning  \citep{ShalevShwartzBenDavid}. In binary classification, the learning algorithm is given a sequence of training examples $(\bzeta_1,y_1),(\bzeta_2,y_2),\ldots$, often noisy, where $\bzeta_i\in\R^d$ and $y_i\in\{0,1\}$ for each $i$.  The job of the algorithm is to develop a rule for distinguishing future, unseen $\bzeta$'s that are classified as $1$ from those classified as $0$. In this work, we limit attention to linear classifiers.  This means that the learning algorithm must determine a vector $\btheta$ such that the classification of $\bzeta$ is $1$ when $\bzeta^T\btheta>0$ else it is $0$. Note that any algorithm for linear classification can be extended to one for nonlinear classification via the construction of ``kernels''; see, {\em e.g.}, \cite{ShalevShwartzBenDavid}. This extension is not pursued; we leave it for later work. 

The usual technique for determining $\btheta$, which is also adopted herein, is to define a loss function that turns the discrete problem of computing a $1$ or $0$ for $\bzeta$ to a continuous quantity. Common choices of loss functions include logistic and hinge. For simplicity, we consider only the unregularized logistic and hinge loss in this work. 

 Our theoretical results assume that our data comes from a Gaussian mixture model (GMM). The GMM is attributed to \cite{article}.  The problem of identifying GMM parameters given random samples has attracted considerable attention in the literature; see, \textit{e.g}., the recent work of \cite{Ashtiani} and earlier references therein. Another common
use of GMMs in the literature, similar to our application here, is as test-cases for a learning algorithm intended to solve a more general problem. Examples include clustering; see, \textit{e.g.,} \cite{DBLP:journals/corr/abs-1902-07137} and \cite{pmlr-v70-panahi17a} and tensor factorization; see, \textit{e.g.,} \cite{sherman2019estimating}.

Ordinarily in deterministic first-order optimization methods, one terminates when the norm of the gradient falls below a predefined tolerance.  In the case of SGD for binary classification, this is unsuitable for two reasons. First, the true gradient is generally inaccessible to the algorithm or it is computationally expensive to generate even a sufficient approximation of the gradient.

 Second, even if the computations were possible, an `optimal' classifier $\btheta$ for the classification task is not necessarily the minimizer of the loss function since the loss function is merely a surrogate for correct classification of the data.  
 
\paragraph{Our contributions.} In this paper, we introduce a new and simple termination criterion for stochastic gradient descent (SGD) applied to binary classification using logistic regression and hinge loss with constant step-size $\alpha>0$. Notably, our proposed criterion adds no additional computational cost to the SGD algorithm. 

We analyze the behavior of the classifier at termination, where we sample from a normal distribution with unknown means $\bmu_0,\bmu_1\in \R^d$ and variances $\sigma^2I_d$. Here  $\sigma>0$ and $I_d$ is the $d \times d$ identity matrix. As such, we make no assumptions on the separability of the data set. 

When the variance is not too large, we have the following results:
\begin{enumerate}
    \item The test will be activated for any fixed positive step-size. In particular, we establish an upper bound for the expected number of iterations before the activation occurs. This upper bound tends to a numeric constant  when $\sigma$  converges to zero. In fact, we show that the expected time until termination decreases linearly as the data becomes more separable ({\em i.e.}, as the noise $\sigma \to 0$). 
    \item  We prove that the accuracy of the classifier at termination nearly matches the accuracy of an optimal classifier. Accuracy is the fraction of predictions that a classification model got right while an optimal classifier minimizes the probability of misclassification when the sample is drawn from the same distribution as the training data. 
\end{enumerate}
When the variance is large, we show that the test will be activated for a sufficiently small step-size. 

We empirically evaluate the performance of our stopping criterion versus a baseline competitor. We compare performances on both synthetic (Gaussian and heavy-tailed $t$-distribution) as well as real data sets (MNIST \citep{MNIST} and CIFAR-10 \citep{cifar10}). In our experiments, we observe that our test yields relatively accurate classifiers with small variation across multiple runs.

\paragraph{Related works.}  To the best of our knowledge, the earliest comprehensive numerical testing of a stopping termination test for SGD in neural networks was introduced by \cite{Prechelt2012}. 
His stopping criteria, which we denote as \textit{small validation set} (SVS), periodically checks the iterate on a validation set.  Theoretical guarantees for SVS were established in the works of \citep{Early_stopping_Lin,Yao2007}. \cite{Hardt:2016:TFG:3045390.3045520} shows that SGD is uniformly stable and thus solutions with low training error found quickly generalize well. These results support exploring new computationally inexpensive termination tests-- the spirit of this paper.

In a related topic,
the relationship between generalization and optimization is an active area of research in machine learning. Much of the pioneering work in this area focused on understanding how early termination of algorithms, such as conjugate gradient, gradient descent, and SGD, can act as an implicit regularizer and thus exhibit better generalization properties \citep{Prechelt2012,Early_stopping_Lin,Yao2007,CG_implicit_regularization,Multipass_Lin}.  The use of early stopping as a tool for improving generalization is not studied herein because our experiments indicate that for the problem under consideration, binary classification with a linear separator, the accuracy increases as SGD proceeds and ultimately reaches a steady value but does not decrease, meaning that there is no opportunity to improve generalization by stopping early.
See also \cite{Nemirovski_Robust_Stochastic_1}.

Instead of using a validation set to stop early, \cite{Variational_Duvenaud} employs an estimate of the marginal likelihood as a stopping criteria. Another termination test based upon a Wald-type statistic developed for solving least squares with reproducing kernels guarantees a minimax optimal testing \citep{Liu_stopping}. However it is unclear the practical benefits of such procedures over a validation set.

Several works have introduced validation procedures to check the accuracy of solutions generated from stochastic algorithms based upon finding a point $\btheta_\varepsilon$ that satisfies a high confidence bound $\bP(f(\btheta_\varepsilon) - \min f \le \varepsilon) \ge 1-p$, in essence, using this as a stopping criteria (\textit{e.g.}, see \cite{Dima_Robust_Stochastic,Ghadimi_Strongly_cvx_validation_2,Ghadimi_Strongly_cvx_validation_1, Juditsky_robust_stochastic,Nemirovski_Robust_Stochastic_1}). 
Yet, notably, all these procedures produce points with small function values. For binary classification, however, this could be quite expensive and a good classifier need not necessarily be the minimizer of the loss function. Ideally, one should terminate when the classifier's direction aligns with the optimal direction-- the approach we pursue herein.

\section{Background and preliminaries}\label{sec:defs}
 Throughout we consider a Euclidean space, denoted by $\R^d$, with an inner product and an induced norm $\norm{\cdot}$. The set of non-negative real numbers is denoted by $\R_{\geq 0}$. Bold-faced variables are vectors. Throughout, the matrix $I_d$ is the $d$ by $d$ identity matrix. All stochastic quantities defined hereafter live on a probability space denoted by $(\bP, \Omega, \mathcal{F})$, with probability measure $\bP$ and the $\sigma$-algebra $\mathcal{F}$ containing subsets of $\Omega$. Recall, a random variable (vector) is a measurable map from $\Omega$ to $\R$ ($\R^d$), respectively. An important example of a random variable is the \textit{indicator of the event $A \in \mathcal{F}$}:  
\[1_A(\omega) = \begin{cases}
1, & \omega \in A\\
0, & \omega \not \in A.
\end{cases}\] 
If $X$ is a measurable function and $t \in \R$, we often simplify the notation for the pull back of the function $X$, to simply $\{\omega \in \Omega \, :\, X(\omega) \le t\} =: \{X \le t\}$.
As is often in probability theory, we will not explicitly define the space $\Omega$, but implicitly define it through random variables. For any sequence of random vectors $(\X_1, \X_2, \hdots, \X_k)$, we denote the \textit{$\sigma$-algebra generated by random vectors $\X_1, \X_2, \hdots, \X_k$} by the notation $\sigma(\X_1, \X_2, \X_3, \hdots, \X_k)$ and the \textit{expected value of $\X$} by $\bE[\X] := \int_{\Omega} \X \, d\bP$.

Particularly, we are interested in random variables that are distributed from normal distributions. In the next section, we state some known results about normal distributions. 

\paragraph{Normal distributions}
The \textit{probability density function of a univariate Gaussian} with mean $\mu$ and variance $\sigma^2$ is described by:
\begin{equation*}
    \varphi(t) := \frac{1}{\sigma\sqrt{2\pi }}\exp\left(-\frac{(t-\mu)^2}{\sigma^2}\right).
\end{equation*}
In particular, we say a random variable $\xi$ is distributed as a Gaussian with mean $\mu$ and variance $\sigma^2$ by $\xi \sim N(\mu,\sigma^2)$ to mean $\bP(\xi \le t) = \int_{-\infty}^t \varphi(t) \, dt$. When the random variable $\xi \sim N(0,1)$, we denote its cumulative density function as
\[\Phi(t) := \bP(\xi \le t) = \frac{1}{\sqrt{2\pi}} \int_{-\infty}^t \exp\left (-\xi^2 \right )  \, d\xi,\] 
and its complement by $\Phi^c(t) = 1-\Phi(t)$. The symmetry of a normal around its mean yields the identity, $\Phi(t) = \Phi^c(-t)$.

One can, analogously, formulate a higher dimensional version of the univariate normal distribution called a \textit{multivariate normal distribution}. A random vector is a multivariate normal distribution if every linear combination of its component is a univariate normal distribution. We denote such multivariate normals by $\bxi \sim N(\bm{\mu},\Sigma)$ with $\bm{\mu}\in \R^d$ and $\Sigma$ is a symmetric positive semidefinite $d\times d$ matrix. 

Normal distributions have interesting properties which simplify our computations throughout the paper. We list those which we specifically rely on.  See \cite{Famoye_univardist_book} for proofs. Below, $\bm{v},\bm{v}'\in \R^d$, $r\in \R$, $\bxi \sim N(\bmu,\sigma^2I_d)$  and $\xi\sim N(\mu,\sigma^2)$. Also, $\psi \sim N(0,1)$.

Throughout our analysis, we encounter random variables of the form $\bm{v}^T\bxi+r$, i.e. affine transformations of a given normal distribution. A fundamental property of normal distributions is that they stay in the same class of distributions after any such transformation. In other words, it holds that 
\begin{equation}\label{eq:fact_affine}
    \bm{v}^T\bm{\xi}+r\sim N(\bm{v}^T\bm{\mu}+r, \sigma^2\Vert \bm{v} \Vert^2).
\end{equation}
Working with independent random variables makes the analysis significantly easier.  In particular, it is essential for us to know when the two random variables $\bm{v}^T\bxi$ and $\bm{v}'^T\bxi$ are independent. We will use the following simple fact below: The following is true
\begin{equation} \label{eqn:fact_independence}
\mbox{$\bm{v}^T\bxi$ and  $\bm{v}'^T\bxi$ are independent} \quad \text{ if and only if } \quad \bm{v}^T\bm{v}'=0.
\end{equation}
We will also use the following simple fact about truncated normal distributions:
\begin{equation} \label{eqn:fact_truncated}
\bE_{\xi}[\xi1_{\{\xi\leq b\}}]=0  \Longrightarrow \Phi\left(\frac{b-\mu}{\sigma}\right)\cdot \exp\left(\frac{1}{2}\cdot\left(\frac{b-\mu}{\sigma}\right)^2 \right)=\frac{\sigma}{\mu}.
\end{equation}
We conclude our remarks on normal distributions with the statement of two facts about the expected value of their norm. The following hold:
\begin{equation}\label{fact:norm_Gaussians}
    \bE\left[\Vert \bxi \Vert^2 \right]= \Vert \bmu \Vert^2+d\sigma^2, \quad \bE_{\xi}[\vert \xi \vert]\leq \sqrt{\frac{2}{\pi}}\cdot \sigma+\vert\mu\vert \quad \text{and}  \quad \bE\left[\vert \psi \vert \right] = \sqrt{\frac{2}{\pi}}.
\end{equation}

\paragraph{Martingales and stopping times} Here we state some relevant definitions and theorems used in analyzing our stopping criteria in Section~\ref{sec:Analysis}. We refer the reader to \cite{Durrett_probability_book} for further details. For any probability space, $\left(\bP,\Omega,\mathcal{F}\right)$, we call a sequence of $\sigma$-algebras, $\{\mathcal{F}_k\}_{k=0}^\infty$, a \textit{filtration} provided that $\mathcal{F}_i \subset \mathcal{F}$ and $\mathcal{F}_0 \subseteq \mathcal{F}_1 \subseteq \mathcal{F}_2 \subseteq \cdots$ holds. Given a filtration, it is natural to define a sequence of random variables $\{X_k\}_{k=0}^\infty$ with respect to the filtration, namely $X_k$ is a $\mathcal{F}_k$-measurable function. If, in addition, the sequence satisfies
\[\bE[|X_k|] < \infty \quad \text{and} \quad \bE[X_{k+1} | \mathcal{F}_k] \le X_k \quad \text{for all $k$},\]
we say $\{X_k\}_{k=0}^\infty$ is a  \textit{supermartingale}. In probability theory, we are often interested in the (random) time at which a given stochastic sequence exhibits a particular behavior. Such random variables are known as \textit{stopping times}. Precisely, a stopping time is a random variable $T: \Omega \to \mathbb{N} \cup \{0, \infty\}$ where the event $\{T=k\} \in \mathcal{F}_k$ for each $k$, i.e., the decision to stop at time $k$ must be measurable with respect to the information known at that time. Supermartingales and stopping times are closely tied together, as seen in the theorem below, which gives a bound on the expectation of a stopped supermartingale. 
\begin{theorem}[See \cite{Durrett_probability_book} Theorem 4.8.5]\label{thm:Durret_Martingale}
Suppose that $\left\{X_k\right\}_{k=0}^{\infty}$ is a supermartingale w.r.t to the filtration $\left\{\mathcal{F}_k\right\}_{k=0}^{\infty}$ and let $T$ be any stopping time satisfying $\bE[T]<\infty$. Moreover if $\bE\left[\vert X_{k+1}-X_k\vert|\mathcal{F}_k\right]\leq B$ a.s. for some constant $B>0$, then it holds that $\bE[X_T]\leq \bE[X_0]$.
\end{theorem}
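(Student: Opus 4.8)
The plan is to prove the result via the standard device of the \emph{stopped process} combined with a dominated-convergence argument, where the two hypotheses---the bound $B$ on conditional increments and $\bE[T]<\infty$---are precisely what is needed to supply an integrable dominating variable.

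First I would introduce the truncated stopping time $T\wedge k=\min(T,k)$ and consider the stopped sequence $\{X_{T\wedge k}\}_{k=0}^\infty$. A routine check shows that $X_{T\wedge k}$ is itself a supermartingale adapted to $\{\mathcal{F}_k\}$: writing $X_{T\wedge(k+1)}-X_{T\wedge k}=(X_{k+1}-X_k)1_{\{T>k\}}$ and using that $\{T>k\}=\{T\le k\}^c\in\mathcal{F}_k$ (because $T$ is a stopping time), conditioning on $\mathcal{F}_k$ pulls the nonnegative factor $1_{\{T>k\}}$ out and multiplies it against $\bE[X_{k+1}-X_k\mid\mathcal{F}_k]\le 0$. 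Since the stopped process starts at $X_0$, this yields the finite-horizon inequality $\bE[X_{T\wedge k}]\le\bE[X_0]$ for every fixed $k$. The entire difficulty is then to pass to the limit $k\to\infty$ inside the expectation.

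Since $\bE[T]<\infty$ forces $T<\infty$ almost surely, we have $X_{T\wedge k}\to X_T$ pointwise a.s. To justify exchanging limit and expectation I would build a dominating function from the telescoping identity
\[
X_{T\wedge k}=X_0+\sum_{j=0}^{(T\wedge k)-1}(X_{j+1}-X_j),
\]
which gives the uniform-in-$k$ bound $|X_{T\wedge k}|\le Y:=|X_0|+\sum_{j=0}^\infty|X_{j+1}-X_j|\,1_{\{T>j\}}$. The key computation is to show $\bE[Y]<\infty$: conditioning on $\mathcal{F}_j$ and using that $1_{\{T>j\}}$ is $\mathcal{F}_j$-measurable together with $\bE[|X_{j+1}-X_j|\mid\mathcal{F}_j]\le B$, each summand obeys $\bE[|X_{j+1}-X_j|\,1_{\{T>j\}}]\le B\,\bP(T>j)$. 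Summing and invoking the tail-sum identity $\sum_{j\ge 0}\bP(T>j)=\bE[T]$ for the nonnegative integer-valued $T$ gives $\bE[Y]\le\bE[|X_0|]+B\,\bE[T]<\infty$.

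With the integrable dominating variable $Y$ in hand, the dominated convergence theorem yields $\bE[X_T]=\lim_k\bE[X_{T\wedge k}]\le\bE[X_0]$, which is the claim. I expect the main obstacle to be exactly the construction and integrability of $Y$: the telescoping bound is routine, but verifying $\bE[Y]<\infty$ is where both hypotheses must be used simultaneously, and it is easy to overlook that the $\mathcal{F}_j$-measurability of $1_{\{T>j\}}$ is what licenses pulling the conditional-increment bound $B$ outside the expectation.
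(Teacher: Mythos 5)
Your proof is correct and is essentially the standard argument for this result (it is the proof given in Durrett for the cited Theorem 4.8.5); the paper itself offers no proof, simply citing the textbook. The stopped-process supermartingale inequality, the dominating variable $Y=|X_0|+\sum_{j\ge 0}|X_{j+1}-X_j|1_{\{T>j\}}$ with $\bE[Y]\le \bE[|X_0|]+B\,\bE[T]$ via the $\mathcal{F}_j$-measurability of $1_{\{T>j\}}$ and the tail-sum formula, and the final dominated-convergence step are all exactly as they should be.
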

As we illustrate in Section~\ref{sec:Analysis}, a connection between stopping criteria (i.e. the decision to stop an algorithm) and stopping times naturally exists.

\section{Stopping criterion for stochastic gradient descent}\label{sec:termtest} We analyze learning by minimizing an expected loss problem of homogeneous linear predictors ({\em i.e.}, without bias) of the form 
\begin{align*}
    \bE_{(\bzeta,y) \sim \mathcal{P}} [\ell(\bzeta^T\btheta, y)] 
\end{align*}
using logistic and hinge regression. Here the samples $(\bzeta, y) \in \R^d \times \{0,1\}$. We recall that in logistic regression the loss function is defined as follows
\begin{equation}\label{eq:log_loss_definition}
    \ell(x,y):=-yx+\log\left(1+\exp(x)\right).
\end{equation}
Also, the hinge loss is defined as the following
\begin{equation}\label{eq:hinge_loss_definition}
    \begin{aligned}
    \ell(x,y):=\begin{cases} \max(1-x,0)& \quad y=1, \\\max(1+x,0)& \quad y=0. \end{cases}
    \end{aligned}
\end{equation}


 The data comes from a mixture model, that is, flip a coin to determine whether an item is in the $y=0$ or $y=1$ class, then generate the sample $\bzeta$ from either the distribution $\mathcal{P}_0$ (if $y=0$ was selected) or $\mathcal{P}_1$ (if $y=1$ was selected). We denote the mean of the $\mathcal{P}_0$ (resp. $\mathcal{P}_1$) distribution by $\bmu_0$ (resp. $\bmu_1$). 
The homogeneity of the linear classifier is without loss of much generality because we can assume $\bmu_0 = - \bmu_1$. We enforce this assumption, with minimal loss in accuracy, by recentering the data using a preliminary round of sampling (see Sec.~\ref{sec:Num_Experiment}).

Because of the homogeneity, we can
simplify the notation by redefining our training examples to be $\bxi_k:=(2y_k-1)\bzeta_k$ and then assuming that for all $k \ge 0$, $y_k = 1$.  Then the new samples $\bxi$ can be drawn from a \textit{single}, mixed distribution $\mathcal{P}_*$ with mean $\bmu:=\bmu_1$ where sampling $\bxi\sim \mathcal{P}_1$ occurs with probability 0.5 and $-\bxi\sim\mathcal{P}_0$ occurs with probability 0.5.
We make this simplification and, from this point on, we analyze the following optimization problem:
\begin{equation}\label{optimization_problem}
\min_{\bm{\theta} \in \R^d} f(\bm{\theta}):= \bE_{\bm{\xi} \sim \mathcal{P}_*}[ \ell(\bxi^T\btheta,1)] 
\end{equation}
Let us remark that the right-hand side of \eqref{optimization_problem} is differentiable with respect to $\btheta$ in either cases of logistic and hinge loss functions. Indeed, in case of hinge loss, note that for any $\btheta_{k-1}$, the function $\bxi_k \mapsto \ell(\bxi_k^T\btheta_{k-1},1)$ is almost surely differentiable as $\bP_{\bxi_k}\left(\bxi_k^T\btheta_{k-1}=1\right)=0$. Hence, we consider the expectation in \eqref{optimization_problem} to be over $\mathbb{R}^d\backslash \left\{\bxi_k:\bxi_k^T\btheta_{k-1}=1 \right\}$ on which the argument is differentiable with respect to $\btheta_{k-1}$.

The most widely used method to solve \eqref{optimization_problem} is SGD. Unlike gradient descent which uses the entire data to compute the gradient of the objective function, the SGD algorithm, at each iteration, generates a sample from the probability distribution and updates the iterate based only on this sample,  
\begin{align}
 \label{eq:derivative_formula}
 \bm{\te}_{k}=\bm{\theta}_{k-1} -\alpha \nabla_{\btheta} \ell(\bxi_{k}^T\btheta_{k-1},1),
\end{align}
where $\bm{\xi}_k \sim \mathcal{P}_*$.  Our presentation of SGD assumes a constant step-size $\alpha>0$. Constant step-size is commonly used in machine learning implementations despite the decreasing step-size often assumed to prove convergence (see, \textit{e.g.}, \cite{RM1951}). \cite{Nemirovski_Robust_Stochastic_1} explain in more detail the theoretical basis for both constant and decreasing step-size and provide an explanation as well as workarounds for the poor practical performance of decreasing step-size.  However, in practice, constant step-size is still widely used.
With constant step-size, SGD is known to asymptotically converge to a neighborhood of the minimizer (see, {\em e.g.}, \cite{Pflug}). Yet, for binary classification, one does not require convergence to a minimizer in order to obtain good classifiers.  

For homogeneous linear classifiers applied to the hinge loss function, it has been shown (\cite{molitor2019bias}) that the homotopic sub-gradient method converges to a maximal margin solution on linearly separable data. In (\cite{Srebro_logistic}), SGD applied to the logistic loss on linearly separable data will produce a sequence of $\btheta_k$ that diverge to infinity, but when normalized also converge to the $L_2$-max margin solution. Little is known about the behavior of constant step-size SGD when the linear separability assumption on the data is removed (see, \textit{e.g.}, \citep{Telgarsky_logistic}). The assumption of zero-noise in our context would mean that $\mathcal{P}_0$, $\mathcal{P}_1$ each reduce to a single point, a trivial example of separable data. Since there is often noise in the sample procedure, the data \textit{may not necessarily be linearly separable}. Understanding the behavior of SGD in the presence of noise is, therefore, important.  

\subsection{Stopping criterion} A common stopping criterion from deterministic first-order optimization methods is to terminate at an iterate satisfying $\norm{\nabla f(\btheta)}^2 < \varepsilon$ for a predetermined $\varepsilon > 0$. Yet, in stochastic optimization, the full gradient is inaccessible or it is simply too expensive to compute. Several works \citep{Dima_Robust_Stochastic,Ghadimi_Strongly_cvx_validation_2,Ghadimi_Strongly_cvx_validation_1, Juditsky_robust_stochastic,Nemirovski_Robust_Stochastic_1} have suggested an alternative for the stochastic setting-- terminate when $\bP(f(\btheta)-\min~f \le \varepsilon) \ge 1-p$ for some chosen small $\varepsilon >0$ and probability $p$. 
However, for binary classification, the minimizer of the loss function and a perfect classifier may not be the same or one may find a suitable substitute, at a lower cost, without having to compute the exact minimizer.

\paragraph{Optimal classifiers.} In classification, we call a classifier, $\btheta^*$, \textit{optimal} if it has the property that 
\begin{equation} \label{eq:stop_criteria_observation} 
\btheta^*\in \Argmax_{\btheta} \bP\left(\bxi^T\btheta>0\,|\,\bxi\sim \mathcal{P}_*\right), \end{equation}
\textit{i.e.}, the classifier, $\btheta^*$, minimizes the probability of misclassifying. Note there exist many optimal classifiers, in fact, the condition \eqref{eq:stop_criteria_observation} is scale-invariant; hence, for any $\lambda > 0$, $\lambda \cdot \bxi^T \btheta^* > 0 \Longleftrightarrow \bxi^T \btheta^* > 0$. 
Even though the binary classifier is scale-free, the logistic and hinge regression loss is not.  It transitions from flat to unit-slope when $\bxi^T\btheta=O(1)$. This suggests that when  $\btheta$ reaches this region, a classification has been made. 

\paragraph{Termination test.} Motivated by the above property of optimal classifiers, we propose the following termination test: Sample $\hat{\bxi}_k \sim \mathcal{P}_*$ and
\begin{align} \label{eq: termination_test}
  \text{Terminate when \;$ \hat{\bxi}_k^T\btheta_k \ge 1$}.
\end{align}
A second motivation for this termination test comes from support vector machine (SVM) theory
\citep{ShalevShwartzBenDavid} in which the scaling of the optimizing classifier is constrained so that the margin between classes is $O(1)$. 
Therefore, our termination test blends an SVM notion with SGD. Algorithm~\ref{alg:SGD_termination} describes the termination criteria \eqref{eq: termination_test} as applied with the update rule governed by SGD.

The termination test \eqref{eq: termination_test} requires an additional sample and an additional inner product per iteration and, as such,  imposes a small additional cost. To reduce this cost, in all our numerical experiments (Sec.~\ref{sec:Num_Experiment}), we use the following termination test. 
\begin{equation}\label{eq: practical_termination_test}
\mbox{Terminate when \;$ \bxi_{k+1}^T\btheta_k\ge 1,$}
\end{equation}
which imposes no computational overhead as SGD already computes $\bxi_{k+1}^T\btheta_k$. Unfortunately, we could not perform a straightforward analysis of \eqref{eq: practical_termination_test} because it introduces additional dependencies in the sequences $\{\bxi_k\}_{k=1}^\infty$ and $\{\btheta_k\}_{k=0}^\infty$. 
After testing both \eqref{eq: termination_test} and \eqref{eq: practical_termination_test}, we found that up to the noise from the randomness, their behaviors in numerical experiments were identical. 

\begin{algorithm}[htp!]
\textbf{initialize:} $\bm{\theta}_0 \in \R^d$, $\alpha > 0$, $\hat{\bxi}_0 \sim \mathcal{P}_*$, $k = 0$\\
\textbf{while $\hat{\bxi}_k^T \btheta_k < 1$}\\
\qquad \qquad Pick data point $\bm{\xi}_{k+1} \sim \mathcal{P}_*$.\\
\qquad \qquad Compute $\nabla_{\bm{\theta}} \ell(\bm{\xi_{k+1}}^T\bm{\theta}_k,1)$ as in \eqref{eq:derivative_formula} \\
\qquad \qquad Update $\bm{\theta}$ by setting 
\begin{equation}\label{eq: SGD_update_expected_loss} \bm{\theta}_{k+1} \leftarrow \bm{\theta}_k - \alpha \nabla_{\bm{\theta}} \ell(\bm{\xi_{k+1}}^T\bm{\theta}_k,1) \end{equation}\\
\qquad \qquad Sample $\hat{\bxi}_{k+1} \sim \mathcal{P}_*$\\
\qquad \qquad $k \leftarrow k+1$\\
\textbf{end}
\caption{SGD with termination test} \label{alg:SGD_termination}
\end{algorithm}

\begin{assumption}\label{Gaussian_Assumption}[The distribution $\mathcal{P}_{*}$ is Gaussian] \rm{Our theoretical analysis makes a further assumption on the distribution $\mathcal{P}_*$. For the rest of this section and Sec.~\ref{sec:Analysis}, $\mathcal{P}_0=N(\bmu_0,\sigma^2I_d)$, $\mathcal{P}_1=N(\bmu_1,\sigma^2I_d)$, and therefore $\mathcal{P}_*=N(\bmu,\sigma^2I_d)$, a Gaussian with unknown mean $\bmu\; (=\bmu_1=-\bmu_0)$ and variance $\sigma^2I_d$. This assumption allows for non-separable data provided $\sigma > 0$.}
\end{assumption}


\paragraph{The minimizer of logistic and hinge regression}
In \eqref{eq:stop_criteria_observation} we defined $\btheta^*$ to be any member of the set of optimal classifiers.  For the remainder of this section, we provide an exact characterization of this set.  In the next lemma, we redefine $\btheta^*$ to the minimizer of the expected loss function for either hinge or logistic and show that it is a positive scalar multiple of $\bmu$.  We will continue to use $\btheta^*$ with this meaning for the remainder of the paper.  In the lemma after that, we show that the set of optimal classifiers are exactly positive scalar multiples of $\bmu$ (or of $\btheta^*$).

\begin{lemma}[Minimizer of the logistic and hinge loss] \label{lem:minimizers} The function $f$ defined in \eqref{optimization_problem} with $\ell$ defined in \eqref{eq:log_loss_definition} or \eqref{eq:hinge_loss_definition} has a unique minimizer at $\btheta^* = \rho^*\bmu$ for some $\rho^*\in (0,+\infty)$. Moreover, let $r=\rho^*\sigma^2$. Then in the case of logistic regression, it holds that $r=2$ and in the case of hinge loss, $w=\frac{\sigma}{r\Vert \bmu \Vert}-\frac{\Vert \bmu \Vert}{\sigma}$ satisfies 
\begin{equation}\label{eq:w_hinge}
\frac{1}{\sqrt{2\pi}}\cdot\frac{\sigma}{\Vert \bmu \Vert}=\Phi(w)\cdot \exp(\tfrac{1}{2}w^2).
\end{equation}
\end{lemma}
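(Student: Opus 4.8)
The plan is to use the fact that $\bxi^T\btheta$ is univariate Gaussian to collapse the $d$-dimensional minimization onto the ray spanned by $\bmu$. By the affine property \eqref{eq:fact_affine}, $\bxi^T\btheta \sim N(\bmu^T\btheta,\sigma^2\norm{\btheta}^2)$, so $f(\btheta)=\bE_{\psi}[\ell(\bmu^T\btheta+\sigma\norm{\btheta}\psi,1)]$ depends on $\btheta$ only through the mean $m=\bmu^T\btheta$ and the scale $s=\sigma\norm{\btheta}$. Since $x\mapsto\ell(x,1)$ is non-increasing for both losses, differentiating under the expectation shows $f$ is \emph{strictly} decreasing in $m$ for each fixed $s>0$ (the $m$-derivative equals $-\bP$ of a nondegenerate event, hence is $<0$). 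Thus among all $\btheta$ of a given nonzero norm, $f$ is minimized exactly when $\bmu^T\btheta$ is largest, which by Cauchy--Schwarz occurs only at $\btheta=\rho\bmu$ with $\rho>0$. This pins every minimizer to the ray $\{\rho\bmu:\rho\ge 0\}$ and reduces the problem to minimizing $h(\rho):=f(\rho\bmu)$ over $\rho\ge 0$.

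Next I would establish existence and uniqueness of $\rho^*\in(0,\infty)$. Writing $W:=\bxi^T\bmu\sim N(\norm{\bmu}^2,\sigma^2\norm{\bmu}^2)$, I would check $h'(0^+)<0$ (for hinge $h'(\rho)=-\bE[W\,1_{\{W<1/\rho\}}]$ so $h'(0^+)=-\norm{\bmu}^2$; similarly for logistic), so the minimizer is not at the origin, and $h(\rho)\to+\infty$ as $\rho\to\infty$. Strict convexity of $h$ on $(0,\infty)$ then gives uniqueness: for logistic it is immediate from $\ell''>0$, while for hinge a direct computation yields $h''(\rho)=\rho^{-3}p_W(1/\rho)>0$, where $p_W$ is the density of $W$. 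Hence $h$ has a unique minimizer $\rho^*\in(0,\infty)$ and $\btheta^*=\rho^*\bmu$, proving the first claim.

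It remains to characterize $\rho^*$ through $h'(\rho^*)=0$. For the hinge loss this reads $\bE[W\,1_{\{W\le 1/\rho^*\}}]=0$, which is precisely the hypothesis of the truncated-normal fact \eqref{eqn:fact_truncated} applied to $W\sim N(\norm{\bmu}^2,\sigma^2\norm{\bmu}^2)$ with cutoff $b=1/\rho^*=\sigma^2/r$. Computing $(b-\norm{\bmu}^2)/(\sigma\norm{\bmu})=\sigma/(r\norm{\bmu})-\norm{\bmu}/\sigma=w$ identifies the standardized cutoff with the quantity $w$ of the statement, and the fact then produces \eqref{eq:w_hinge} directly.

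For the logistic loss the stationarity condition becomes $\bE[Z/(1+e^{Z})]=0$ with $Z=\rho^*W\sim N(M,S^2)$, where $M=\rho^*\norm{\bmu}^2$ and $S^2=(\rho^*)^2\sigma^2\norm{\bmu}^2$, so that $S^2=rM$. I expect the crux of the lemma to be showing this forces $r=2$, i.e. $S^2=2M$. The clean route is an exponential-tilting identity: completing the square gives $e^{-z}\,q_M(z)=e^{-(2M-S^2)/2}\,q_{M-S^2}(z)$, where $q_m$ denotes the $N(m,S^2)$ density. Setting $S^2=2M$ makes the prefactor equal to $1$ and shifts the mean from $M$ to $-M$, so that, using $1/(1+e^{z})=e^{-z}/(1+e^{-z})$, we get $\bE_{N(M,S^2)}[Z/(1+e^{Z})]=\bE_{N(-M,S^2)}[Z/(1+e^{-Z})]$. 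Reflecting $z\mapsto -z$ turns $N(-M,S^2)$ back into $N(M,S^2)$ and, via $1/(1+e^{z})=1-1/(1+e^{-z})$, shows the right-hand side equals $-\bE_{N(M,S^2)}[Z/(1+e^{Z})]$; hence the quantity is its own negative and vanishes. So $S^2=2M$ satisfies stationarity, and uniqueness from the second step forces $r=\rho^*\sigma^2=2$. The main obstacle is discovering exactly this tilt-and-reflect cancellation; the surrounding convexity and Gaussian computations are routine.
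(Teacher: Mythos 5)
Your proof is correct, and it reaches the ray $\{\rho\bmu:\rho>0\}$ by a genuinely different mechanism than the paper. The paper works with first-order conditions: for $\bm{v}^T\btheta=0$, the independence of $\bxi^T\bm{v}$ and $\bxi^T\btheta$ factors $\bm{v}^T\nabla f(\btheta)$ as $\bm{v}^T\bmu$ times a strictly positive quantity, so every critical point is a scalar multiple of $\bmu$, and convexity then converts this into a statement about minimizers. You instead observe that $f$ depends on $\btheta$ only through the pair $(\bmu^T\btheta,\sigma\norm{\btheta})$, is strictly decreasing in the first coordinate for fixed second coordinate, and apply Cauchy--Schwarz on each sphere; this is a zeroth-order argument that characterizes minimizers directly, sidesteps the nondifferentiability of the hinge loss, and delivers coercivity and existence along the way. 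The one-dimensional analysis is equivalent in substance: you use strict convexity of $h(\rho)=f(\rho\bmu)$, while the paper shows the derivative $g$ is strictly decreasing, and your computation $h''(\rho)=\rho^{-3}p_W(1/\rho)$ for the hinge loss is exactly the second-derivative formula the paper derives later in Lemma~\ref{lem:lower_bound_2}. For $r=2$, your tilt-and-reflect identity $e^{-z}q_M(z)=e^{-(2M-S^2)/2}q_{M-S^2}(z)$ is the paper's odd-integrand observation in the display preceding its conclusion $g(\rho^*)=0$, repackaged as an exponential change of measure; and the hinge characterization via the truncated-normal fact is identical to the paper's (note that \eqref{eqn:fact_truncated} as printed omits the factor $\tfrac{1}{\sqrt{2\pi}}$ that both your computation and \eqref{eq:w_hinge} correctly carry). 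Both routes prove the same statements; yours is somewhat more self-contained on the reduction-to-the-ray step, while the paper's gradient factorization is the device it reuses elsewhere in its drift analysis.
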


\begin{proof}
We consider the logistic and hinge loss case separately. 
\item \textbf{Logistic loss.} We have 
\[
f(\btheta)=\bE_{\bm{\xi} \sim N(\bmu,\sigma^2I_d)}[ -\btheta^T\bxi + \log(1+ \exp(\btheta^T\bxi))].
\]
Clearly, $f$ is a convex function. We next observe that for any $\bm{v},\btheta\in \R^d$ with $\bm{v}^T\btheta=0$, it holds that  
\begin{equation}\label{app.eq:min_eq101}
  \bm{v}^T\nabla f\left(\btheta\right)=\bE_{\bxi}\left[\frac{\bxi^T\bm{v}}{1+\exp(\bxi^T\btheta)}\right]=\bE_{\bxi}[\bxi^T\bm{v}]\bE_{\bxi}\left[\frac{1}{1+\exp(\bxi^T\btheta)}\right]=\bm{v}^T\bmu\cdot\bE_{\bxi}\left[\frac{1}{1+\exp(\bxi^T\btheta)}\right].
\end{equation}
Here we used that $\bxi^T\bm{v}$ and $\bxi^T\btheta$ are independent random variables and the expectation of the product of two uncorrelated random variables is the product of the expectations.  Now note that for any $\btheta$, the quantity  $\bE_{\bxi}\left[\frac{1}{1+\exp(\bxi^T\btheta)}\right]$ is strictly positive. Therefore, if  $\bm{v}^T\btheta=0$ and $\nabla f(\btheta)=\bm{0}$ then, using \eqref{app.eq:min_eq101}, we obtain that $\bm{v}^T\bmu=0$. Hence, we established that $\nabla f(\btheta)=\bm{0}$ implies $\btheta=\rho \bmu$ for some $\rho \in \R$. 
On the other hand, using \eqref{app.eq:min_eq101} again, we have that $\nabla f(\rho \bmu)=0$ if and only if $\bmu^T\nabla f(\rho \bmu)=0$. To see the only if direction, suppose $\bmu^T\nabla f(\rho \bmu) = 0$ and $\nabla f(\rho \bmu) \neq 0$. Then we have $\nabla f(\rho \bmu) = \bm{v}$ where the vector $\bm{v}$ is nonzero such that $\bm{v}^T\bmu = 0$. By \eqref{app.eq:min_eq101}, we deduce $\norm{\bm{v}}^2 = \bm{v}^T \nabla f(\rho \bmu) = 0$ yielding a contradiction. 

Next, we consider the function,
\begin{equation*}
  g(\rho):=-\bE_{\bxi}\left[\frac{\bmu^T\bxi}{1+\exp(\rho{\bmu}^T\bxi)} \right].
\end{equation*}
Observe that $g(\rho) = \bmu^T \nabla f(\rho \bmu)$. Therefore, if we can show $g(\rho)$ has a unique zero at $\rho = \tfrac{2}{\sigma^2} =: \rho^*$, we can conclude that $\bmu^T\nabla f(\rho^* \bmu) = 0$ which, in turn, gives us that $\rho^* \bmu$ is the unique solution to $\nabla f(\rho^* \bmu) = 0$. It remains to show that $\rho^*$ is the unique zero of $g$. By \eqref{eq:fact_affine}, $z: = \bmu^T\bxi \sim N(\Vert \bmu\Vert^2,\sigma^2\Vert \bmu \Vert^2)$. Therefore, this yields
\begin{equation*}
    g(\rho)=\frac{1}{\sigma\Vert \bmu \Vert \sqrt{2\pi}} \int_{-\infty}^{\infty} \frac{z}{1+\exp(\rho z)} \exp\left(-\frac{(z-\Vert \bmu \Vert^2)^2}{2\sigma^2\Vert \bmu \Vert^2} \right)dz.
\end{equation*}
Expanding out the term inside the integral, we conclude 
\begin{align}
    \frac{z}{1+\exp(\rho z)} \exp\left(-\frac{(z-\Vert \bmu \Vert^2)^2}{2\sigma^2\Vert \bmu \Vert^2} \right)&=\frac{z}{2\cosh\left(\frac{\rho z}{2}\right)}\exp\left(-\frac{\rho z}{2}-\frac{(z-\Vert \bmu \Vert^2)^2}{2\sigma^2\Vert \bmu \Vert^2} \right)\nonumber\\
    &=\frac{z}{2\cosh\left(\frac{\rho z}{2}\right)}\exp\left(-\frac{z^2+\left(\rho\sigma^2\Vert \bmu \Vert^2-2\Vert \bmu \Vert^2 \right)z+\Vert \bmu \Vert^4}{2\sigma^2\Vert \bmu \label{app.eq:integrho^*}
    \Vert^2}\right).
  \end{align}
When $\rho = \rho^*$, we observe that equation \eqref{app.eq:integrho^*} is an odd function of $z$. Therefore, the function $g(\rho^*) = 0$, i.e. the integral of \eqref{app.eq:integrho^*} is $0$. To see that $\rho^*$ is the only zero of $g$, we note that
\[
g'(\rho)=\bE_{\bxi}\left[\frac{\left(\bmu^T\bxi \right)^2\exp(\rho \bmu^T\bxi)}{(1+\exp(\rho \bmu^T\bxi))^2} \right] >0.
\]
Here, $g'(\rho)=0$ implies that $\bmu^T\bxi=0$ a.s. which is not true. As a result, the function $g(\rho)$ is strictly decreasing with a zero at $\rho^*$. The result follows.  
\item \textbf{Hinge loss.} We begin by noting that $f$ is differentiable and it holds that
\[
\nabla f(\btheta)= -\bE_{\bxi}[\bxi1_{\{\bxi^T\btheta\leq 1\}}].
\]
We next observe that for any $\bm{v},\btheta\in \R^d$ such that $\bm{v}^T\btheta=0$, it holds that 
\begin{equation}\label{eq:hinge_min_lemma}
    -\bm{v}^T\nabla f(\btheta)=\bE_{\bxi}[\bm{v}^T\bxi1_{\{\bxi^T\btheta\leq 1\}}]=\bE_{\bxi}[\bm{v}^T\bxi]\bE_{\bxi}[1_{\{\bxi^T\btheta\leq 1\}}]=\bm{v}^T\bmu\cdot\bE_{\bxi}[1_{\{\bxi^T\btheta\leq 1\}}].
\end{equation}
Here we used that $\bxi^T\bm{v}$ and $\bxi^T\btheta$ are independent random variables and the expectation of the product of two uncorrelated random variables is the product of the expectations.  Now note that for any $\btheta$, the quantity  $\bE_{\bxi}[1_{\{\bxi^T\btheta\leq 1\}}]$ is strictly positive. Therefore, if  $\bm{v}^T\btheta=0$ and $\nabla f(\btheta)=\bm{0}$ then, using \eqref{eq:hinge_min_lemma}, we obtain that $\bm{v}^T\bmu=0$. Hence, we established that $\nabla f(\btheta)=\bm{0}$ implies $\btheta=\rho \bmu$ for some $\rho \in \R$.  On the other hand, using \eqref{eq:hinge_min_lemma} again, we have that $\nabla f(\rho \bmu)=0$ if and only if $\bmu^T\nabla f(\rho \bmu)=0$. To see the only if direction, suppose $\bmu^T\nabla f(\rho \bmu) = 0$ and $\nabla f(\rho \bmu) \neq 0$. Then we have $\nabla f(\rho \bmu) = \bm{v}$ where the vector $\bm{v}$ is nonzero such that $\bm{v}^T\bmu = 0$. By \eqref{eq:hinge_min_lemma}, we deduce $\norm{\bm{v}}^2 = \bm{v}^T \nabla f(\rho \bmu) = 0$ yielding a contradiction. 

Next, consider the function 
\begin{equation}\label{eq:derivative_hinge}
   g(\rho)=\bE_{\bxi}[ \bmu^T\bxi 1_{\{\rho \bxi^T\bmu\leq 1 \}}]. 
\end{equation}

Observe that $g(\rho) = \bmu^T \nabla f(\rho \bmu)$. Dominated Convergence Theorem yields that 
\[
\lim_{\rho \to +\infty}g(\rho)=\bE_{\bxi}[\bmu^T\bxi1_{\{\bmu^T\bxi\leq 0 \}}], \quad \lim_{\rho \to -\infty}g(\rho)=\bE_{\bxi}[\bmu^T\bxi1_{\{\bmu^T\bxi\geq 0 \}}].
\]
It, therefore, holds that $\lim_{\rho \to +\infty}g(\rho)<0$ and $\lim_{\rho \to -\infty}g(\rho)>0$. Since $g(0)=\bE_{\bxi}[\bmu^T\bxi]>0$, it remains to show that $g$ is a strictly decreasing function. To this end, we note that for any fixed $\rho_1<\rho_2$, it holds that 
\begin{equation}\label{eq:hinge_g_decreasing}
\bmu^T\bxi\left(1_{\{\rho_1\bmu^T\bxi\leq 1 \}}-1_{\{\rho_2\bmu^T\bxi\leq 1 \}} \right) \geq 0 \quad \text{for any value of $\bxi$}.
\end{equation}
Indeed, if $\bmu^T\bxi\geq 0$, then $\rho_1\bmu^T\bxi\leq \rho_2\bmu^T\bxi$; thus ensuring $1_{\{\rho_1\bmu^T\bxi\leq 1 \}}\geq 1_{\{\rho_2\bmu^T\bxi\leq 1 \}}$. The case $\bmu^T\bxi\leq 0$ follows similarly. We, therefore, conclude that $g(\rho_1)\geq g(\rho_2)$. Finally, note that $g(\rho_1)=g(\rho_2)$, implies that \eqref{eq:hinge_g_decreasing} holds with equality, almost surely. Clearly, this yields a contradiction. It remains to show \eqref{eq:w_hinge}. By \eqref{eq:derivative_hinge}, we have that $g'(\rho^*)=\bE_{\bxi}[\bmu^T\bxi1_{\{\bmu^T\bxi\leq \frac{1}{\rho^*}\}}]$.  Using \eqref{eq:fact_affine} and \eqref{eqn:fact_truncated}, we obtain that 
\begin{equation}\label{myeq2}
    \Phi\left(\frac{1-\rho^*\Vert \bmu \Vert^2}{\rho^*\sigma\Vert\bmu\Vert} \right)\cdot \exp\left(\frac{1}{2}\cdot\left(\frac{1-\rho^*\Vert \bmu \Vert^2}{\rho^*\sigma\Vert\bmu\Vert} \right)^2\right)=\frac{1}{\sqrt{2\pi}}\cdot\frac{\sigma}{\Vert\mu\Vert}.
\end{equation}
The result immediately follows.

\end{proof}

The previous lemma has defined $\btheta^*$ to be the minimizer of the loss function and showed that it is a positive multiple of $\bmu$.  We now show that this $\btheta^*$ and its positive scalar multiples are exactly the set of optimal classifiers in the sense of \eqref{eq:stop_criteria_observation}, i.e., we give an exact characterization of that set.

\begin{lemma}[Characterization of the optimal classifier]
The following is true 
\begin{equation} 
\Argmax_{\btheta} \bP\left(\bxi^T\btheta>0\right) = \{ \lambda\cdot \btheta^*: \lambda>0\}.\end{equation}
\end{lemma}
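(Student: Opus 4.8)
The plan is to reduce the $d$-dimensional maximization in \eqref{eq:stop_criteria_observation} to a one-dimensional monotonicity argument combined with Cauchy--Schwarz. First I would fix any $\btheta \in \R^d$ with $\btheta \neq \bz$ (the degenerate case $\btheta = \bz$ gives $\bxi^T\btheta \equiv 0$, hence $\bP(\bxi^T\btheta > 0) = 0$, so it cannot be a maximizer). By the affine-transformation fact \eqref{eq:fact_affine}, the scalar $\bxi^T\btheta$ is distributed as $N(\bmu^T\btheta, \sigma^2\norm{\btheta}^2)$. Standardizing and applying the symmetry identity $\Phi(t) = \Phi^c(-t)$, I would obtain
\begin{equation*}
\bP(\bxi^T\btheta > 0) = \Phi^c\!\left(\frac{-\bmu^T\btheta}{\sigma\norm{\btheta}}\right) = \Phi\!\left(\frac{\bmu^T\btheta}{\sigma\norm{\btheta}}\right).
\end{equation*}

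Next I would exploit the strict monotonicity of the standard normal CDF: because $\Phi$ is strictly increasing and the factor $1/\sigma > 0$ does not affect the argmax, maximizing $\bP(\bxi^T\btheta > 0)$ over $\btheta \neq \bz$ is equivalent to maximizing the scale-invariant quantity $\bmu^T\btheta / \norm{\btheta}$. At this point the problem becomes purely geometric: by the Cauchy--Schwarz inequality, $\bmu^T\btheta / \norm{\btheta} \leq \norm{\bmu}$, with equality if and only if $\btheta$ is a scalar multiple of $\bmu$. Among such multiples, only the strictly positive ones attain the upper bound $\norm{\bmu}$ (negative multiples yield $-\norm{\bmu}$), so the set of maximizers is exactly $\{\lambda\bmu : \lambda > 0\}$, and this maximum value $\Phi(\norm{\bmu}/\sigma)$ is genuinely attained (e.g.\ at $\btheta = \bmu$), confirming the argmax is nonempty.

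Finally I would translate this back into the asserted form. By Lemma~\ref{lem:minimizers}, $\btheta^* = \rho^*\bmu$ with $\rho^* \in (0,+\infty)$; since $\rho^* > 0$, the positive ray spanned by $\bmu$ coincides with the positive ray spanned by $\btheta^*$, giving $\{\lambda\bmu : \lambda > 0\} = \{\lambda\btheta^* : \lambda > 0\}$, which is the claim.

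I do not anticipate a serious obstacle here; the argument is short once the affine-transformation fact collapses everything to a univariate normal. The only points demanding care are the two edge considerations that guarantee the characterization is an \emph{exact} equality rather than an inclusion: ruling out $\btheta = \bz$, and using the equality case of Cauchy--Schwarz together with the sign check to exclude negative multiples of $\bmu$ so that the argmax is precisely the positive ray, not the entire line $\R\bmu$.
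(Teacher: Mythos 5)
Your proposal is correct and follows essentially the same route as the paper: reduce $\bP(\bxi^T\btheta>0)$ to $\Phi$ of the normalized inner product $\bmu^T\btheta/(\sigma\norm{\btheta})$ via \eqref{eq:fact_affine}, then use monotonicity of $\Phi$ plus the equality case of alignment (the paper phrases this with the cosine of the angle $w_{\btheta}$, which is the same as your Cauchy--Schwarz step). Your write-up is in fact slightly more careful than the paper's, which omits the $\btheta=\bz$ case and contains sign/typo slips (its displayed formula uses $\Phi^c$ where $\Phi$ is meant, and states the optimality condition as $\cos(w_{\btheta})=0$ rather than $\cos(w_{\btheta})=1$).
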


\begin{proof}
Observe that the following simple fact holds.
\begin{equation}\label{app.fact:PhiProb}
    \bP_{\hat{\bxi}}\left(\hat{\bxi}^T\btheta\geq t\right)=\Phi^c\left( \frac{\bmu^T\btheta-t}{\sigma \Vert \btheta\Vert}\right), \quad \text{for all $\btheta\in \R^d$, $t\in \R$ and $\hat{\bxi} \sim N(\bmu,\sigma^2I_d)$}.
\end{equation}
Therefore we have that $\bP_{\bxi}(\bxi^T\btheta>0)=\Phi^c\left(\frac{\Vert \bmu \Vert}{\sigma}\cdot\cos(w_{\btheta}) \right)$ where $\bxi \sim N(\bmu,\sigma^2I_d)$ and $w_{\btheta}$ denotes the angle between the two vectors $\btheta$ and $\bmu$. On the other hand a classifier $\btheta$ is optimal if and only if $\btheta=\rho \bmu$ for some $\rho > 0$, i.e. $\cos(w_{\btheta})=0$. The proof is complete after noting that $\Phi$ is an increasing function. 
\end{proof}

\section{Analysis of stopping criterion} \label{sec:Analysis}
In this section, we present our analysis of the stopping criterion \eqref{eq: termination_test} proposed in Section~\ref{sec:termtest}. Here we introduce the first iteration at which the stopping criterion is satisfied, denoted by the random variable
 \begin{equation}\label{eq:stopping_criteria}
    T:=\inf\left\{k> 0: \hat{\bxi}_k^T\btheta_k \geq 1\right\}.
\end{equation}
By viewing the stopping criterion through the lens of stopping times, we are able to utilize probability theory to analyze the classifier at termination $\btheta_T$.  Throughout this section, we work with the following filtration.
\begin{equation}\label{eq:sigma}
  \mathcal{F}_0 = \sigma(\btheta_0) \quad \text{and} \quad  \mathcal{F}_k:=\sigma(\btheta_0, \hat{\bxi_1}, \bxi_1, \hat{\bxi_2}, \bxi_2, \hdots, \hat{\bxi_k}, \bxi_k), \quad \text{for all $k\geq 1$}
\end{equation}
 Clearly, the random variable $\btheta_k$ is $\mathcal{F}_k$-measurable.  Our theoretical results are structured as follows.

 First, we show that SGD with our proposed termination test indeed stops after a finite number of iterations. To do so, we provide a bound on $\bE[T]$, \textit{i.e.} the expected number of iterations before termination. Yet, despite this guarantee, the resulting classifier at termination need not be optimal. Hence, our second result establishes that both $\btheta_T$ and $\btheta^*$ point in approximately the same direction; thereby ensuring that the classifier at termination, $\btheta_T$, is nearly optimal. We remark the worst-case bounds established throughout these sections are conservative; we observe in our experiments that the termination test stops sooner while also yielding good classification properties for Gaussian and non-Gaussian data sets. 

To bound $\bE[T]$, we identify subsets of $\R^d$ for which when an iterate enters the set, termination (\textit{i.e.} \eqref{eq: termination_test}) is \textit{highly likely} to succeed. Such sets $C$, we call \textit{target sets}. Precisely, for any $\btheta \in C$ and $\hat{\bxi}\sim N(\bmu,\sigma^2I_d)$, the probability of terminating is at least $\delta>0$,
\begin{equation}\label{eq:probability_delta}
    \exists \; \delta>0 \text{ such that } \mathbb{P}_{\hat{\bxi}}\left( \hat{\bxi}^T\btheta\geq 1 \right) \geq \delta.
\end{equation}
We guarantee the iterates generated by SGD enter the target set by way of a \textit{drift function}, $V:\R^d\rightarrow [0,+\infty)$. A drift function, on average, decreases each time the iterate fails to live in the target set. In other words, conditioned on the past iterates the following holds 
\begin{equation}\label{eq:driftequation}
    (\bE[V(\btheta_k)|\mathcal{F}_{k-1}]-V(\btheta_{k-1})])1_{\{\btheta_{k-1}\not\in C\}} \leq -b1_{\{\btheta_{k-1}\not\in C\}}
\end{equation}
for the target set $C$ and some positive constant $b$. Loosely speaking, the iterates in expectation \textit{drift} towards the target set. Target sets and drift functions in the context of drift analysis are well-studied in stochastic processes, see Lemma \ref{lem:drift_from_meyn} below.

A natural choice for the target set is a neighborhood of the unique optimum solution of \eqref{optimization_problem}, $\btheta^*$, with the drift function $\Vert \btheta-\btheta^* \Vert^2$. Indeed, it is known the iterates of SGD converge to a neighborhood of $\btheta^*$ (\cite{Pflug}). However, an iterate may be nearly optimal well before it enters this neighborhood. In fact when $\sigma \ll \Vert \bmu \Vert$, we identify a target set where satisfying the stopping criterion occurs at least half the time and does not require the iterate to be near $\btheta^*$. We summarize below our target set and drift function. 

\begin{enumerate}
    \item Under the assumption $\sigma \leq c \Vert \bmu \Vert$ for some numerical constant $c$, which we call the \textit{Low Variance Regime}, we define the target set to be 
    \begin{equation}\label{eq:targetset_lownoise}
        C = \{\btheta: \bmu^T\btheta\geq 1 \},
    \end{equation}
    and the drift function by 
    \begin{equation}\label{eq:driftfunction_lownoise}
        V(\btheta)=\left(M-\bmu^T\btheta\right)^2,
    \end{equation}
    for some constant $M$, to be determined later.
    \item Under the assumption $c\Vert \bmu \Vert \leq \sigma$ where the constant $c$ is the same as in 1 above, which we call the \textit{High Variance Regime}, we define the target set to be 
    \begin{equation}\label{eq:targetset_highnoise}
        C=\{\btheta: \vert \rho\sigma^2-1\vert <1 \text{ and } \sigma \Vert \tilde{\btheta}\Vert\leq c'\},
    \end{equation}
    for some numerical constant $c'$. Here, we orthogonally decompose $\btheta=\rho \bmu+\tilde{\btheta}$ with $\bmu^T\tilde{\btheta}=0$. We use the following drift function
    \begin{equation}\label{eq:driftfunction_highnoise}
     V(\btheta)=\frac{1}{2\alpha}\Vert \btheta-\btheta^*\Vert^2.   
    \end{equation}
\end{enumerate}
In Section \ref{sec:proof_low} (resp. Section \ref{sec:proof_high}) we show that the pairs $(C,V)$ defined in \eqref{eq:targetset_lownoise} and \eqref{eq:driftfunction_lownoise} (resp. \eqref{eq:targetset_highnoise} and \eqref{eq:driftfunction_highnoise}) satisfies the drift equation \eqref{eq:driftequation} for any step-size $\alpha$ (resp. for any sufficiently small step-size $\alpha$).

As mentioned above, the target set $C$ attracts the iterates generated by SGD. Each time an iterate enters $C$, the stopping criterion holds with probability at least $\delta>0$. Provided the iterates enters the set $C$ an infinite number of times, then after waiting a geometrically distributed many iterations, we expect the following condition to hold:
\begin{equation}\label{eq:termination_inside_C}
    \hat{\bxi}_k^T\btheta_k\geq 1 \text{ and } \btheta_k \in C.
\end{equation}
 The SGD algorithm does not know the value of $\btheta^*$; therefore at each iteration, it cannot check whether the condition \eqref{eq:termination_inside_C} occurs. Nevertheless, we are able to compute a bound on the average waiting time until \eqref{eq:termination_inside_C} holds and the first time \eqref{eq:termination_inside_C} holds is always an upper bound on $T$, our stopping criterion. This is summarized in Lemma \ref{lem:ETleqET_C}. Precisely, if we denote by 
 \begin{equation}\label{eq:stoppingtime_TC}
     T_C:=\inf\{k>0:\hat{\bxi}_k^T\btheta_k\geq 1 \text{ and } \btheta_k \in C\},
 \end{equation}
 then $T\leq T_C$, thus yielding $\bE[T]\leq \bE[T_C]$. We bound $\bE[T_C]$ by way of stopping times $\tau_m$ defined as the $m^{th}$ time the iterates of SGD enters $C$. Formally for any sequence $\{\btheta_k\}_{k=0}^\infty$ generated by SGD starting at $\btheta_0 = \bm{0}$, we set  
 \begin{equation}\label{eq:tau1}
     \tau_1:=\inf\{k>0:\btheta_k\in C\}
 \end{equation}
and inductively, for $m\geq 2$, 
\begin{equation}\label{eq:taum}
    \tau_m:=\inf\{k>\tau_{m-1}:\btheta_k\in C\}.
\end{equation}
The following lemma formalizes the discussion above.  
\begin{lemma}\label{lem:ETleqET_C}
Let $\{\btheta_k\}_{k=0}^\infty$ be a sequence generated by SGD such that $\btheta_0 = \bm{0}$ and suppose that $\bE[\tau_m]<+\infty$ for all $m\geq 1$. Then the following holds 
\begin{equation}
    \bE[T]\leq \bE[T_C]\leq \sum_{m=1}^{\infty} \bE[\tau_m](1-\delta)^{m-1},
\end{equation}
where $\delta$ satisfies \eqref{eq:probability_delta}.
\end{lemma}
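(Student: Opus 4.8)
The plan is to prove the two inequalities separately. The first, $\bE[T] \le \bE[T_C]$, is immediate: the defining index set for $T_C$ in \eqref{eq:stoppingtime_TC} is contained in the defining set for $T$ in \eqref{eq:stopping_criteria} (both require $\hat{\bxi}_k^T\btheta_k \ge 1$, but $T_C$ imposes the extra condition $\btheta_k \in C$), so taking the infimum over the smaller set can only enlarge it. Hence $T \le T_C$ pointwise, and the expectations are ordered.

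The second inequality is the heart of the matter, and the first thing to exploit is the independence structure built into Algorithm~\ref{alg:SGD_termination}: the iterate sequence $\{\btheta_k\}$, and therefore the entry times $\tau_m$ of \eqref{eq:tau1}--\eqref{eq:taum}, is a function only of $\btheta_0$ and the update samples $\{\bxi_j\}$, whereas the test samples $\{\hat{\bxi}_j\}$ are i.i.d.\ and independent of all of these. I would therefore condition on the $\sigma$-algebra $\mathcal{G} := \sigma(\btheta_0, \bxi_1, \bxi_2, \ldots)$ generated by the trajectory. Given $\mathcal{G}$ each $\tau_m$ is a fixed integer and each $\btheta_{\tau_m}$ lies in $C$, while the events $A_m := \{\hat{\bxi}_{\tau_m}^T \btheta_{\tau_m} \ge 1\}$ use the distinct fresh samples $\hat{\bxi}_{\tau_1}, \hat{\bxi}_{\tau_2}, \ldots$. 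Consequently the $A_m$ are conditionally independent given $\mathcal{G}$, and since $\btheta_{\tau_m}\in C$ the target-set property \eqref{eq:probability_delta} yields $\bP(A_m \mid \mathcal{G}) \ge \delta$ for every $m$.

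Next I would observe that the times at which $\btheta_k \in C$ are exactly $\tau_1 < \tau_2 < \cdots$, so $T_C$ is the first $\tau_m$ at which the test also succeeds; writing $M := \inf\{m : A_m \text{ holds}\}$, we get $T_C = \tau_M$. Before this is meaningful, I would use the hypothesis $\bE[\tau_m] < \infty$ to conclude $\tau_m < \infty$ a.s.\ for every $m$, so the trajectory enters $C$ infinitely often and, since $\prod_{j\ge 1}(1-\delta) = 0$, $M < \infty$ a.s. Conditional independence then gives $\bP(M = m \mid \mathcal{G}) = \bP(A_m \mid \mathcal{G}) \prod_{j=1}^{m-1}\bP(A_j^c \mid \mathcal{G}) \le (1-\delta)^{m-1}$.

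The final step assembles the bound. Since $\tau_m$ is $\mathcal{G}$-measurable and nonnegative, the tower property gives $\bE[\tau_m 1_{\{M=m\}}] = \bE[\tau_m \, \bP(M=m \mid \mathcal{G})] \le (1-\delta)^{m-1}\bE[\tau_m]$. Summing over $m$ (Tonelli justifies the interchange, all terms being nonnegative) together with $T_C = \tau_M = \sum_{m\ge 1}\tau_m 1_{\{M=m\}}$ yields the claimed $\bE[T_C] \le \sum_{m=1}^\infty \bE[\tau_m](1-\delta)^{m-1}$. The main obstacle is the careful handling of the conditional independence at the \emph{random} times $\tau_m$: one must verify that conditioning on the trajectory $\sigma$-algebra $\mathcal{G}$ genuinely freezes the $\tau_m$ while leaving the test samples $\hat{\bxi}_{\tau_m}$ with their unconditional, mutually independent law, which is precisely what simultaneously delivers $\bP(A_m \mid \mathcal{G}) \ge \delta$ and the conditional independence across $m$.
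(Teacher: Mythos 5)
Your proposal is correct and follows essentially the same route as the paper: both condition on the trajectory $\sigma$-algebra $\sigma(\btheta_0,\bxi_1,\bxi_2,\ldots)$, exploit the conditional independence of the fresh test samples $\hat{\bxi}_{\tau_1},\hat{\bxi}_{\tau_2},\ldots$ at the (now frozen) entry times to get the geometric factor $(1-\delta)^{m-1}$, and then decompose $T_C=\sum_m \tau_m 1_{\{T_C=\tau_m\}}$ and sum. Your event $\{M=m\}$ is exactly the paper's $\{T_C=\tau_m\}$, so the two arguments coincide in substance.
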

\begin{proof}
 We first show that \begin{equation} \label{eq:analysis_1} \bE\left[1_{\{T_C\geq \tau_m\}}\right]\leq (1-\delta)^{m-1}.\end{equation} Define the $\sigma$-algebra $\mathcal{F'}=\sigma(\btheta_0,\bxi_1,\bxi_2,\cdots)$. From the independence between $\sigma(\hat{\bxi}_k)$'s  and $\mathcal{F}'$ and also $\tau_i<+\infty$ a.s. for all $i\geq 1$, the following is obtained:
\begin{align*}
    \bE\left[1_{\{ T_C\geq \tau_m\}}|\mathcal{F}'\right]
    &= \bE\left[1_{\{ \hat{\bxi}_{\tau_1}^T\btheta_{\tau_1}<1\}}\cdots1_{\{ \hat{\bxi}_{\tau_{m-1}}^T\btheta_{\tau_{m-1}}<1\}}|\mathcal{F}'\right]\\&=\prod_{i=1}^{m-1}\bE\left[1_{\{ \hat{\bxi}_{\tau_i}^T\btheta_{\tau_i}<1\}}|\mathcal{F}'\right]\\&\leq (1-\delta)^{m-1}.
\end{align*}
By taking expectations, we conclude \eqref{eq:analysis_1} holds. Now since $\bE[1_{\{T_C=+\infty\}}]\leq \bE[1_{\{T_C\geq \tau_m\}}]$ for all $m\geq 1$, it follows from \eqref{eq:analysis_1}  that $T_C<\infty$ a.s. We next observe that
\begin{align*}
    \bE\left[T_C1_{\{ T_C= \tau_m\}}|\mathcal{F}'\right]&=\bE\left[\tau_m1_{\{ T_C= \tau_m\}}|\mathcal{F}'\right]\\
    &\le\tau_m\bE\left[1_{\{ \hat{\bxi}_{\tau_{1}}^T\btheta_{\tau_{1}}<1\}}\cdots1_{\{ \hat{\bxi}_{\tau_{m-1}}^T\btheta_{\tau_{m-1}}<1\}}|\mathcal{F}'\right]\\
    &=\tau_m\prod_{i=1}^{m-1}\bE\left[1_{\{ \hat{\bxi}_{\tau_{i}}^T\btheta_{\tau_{i}}<1\}}|\mathcal{F}'\right]\\
    &\leq \tau_m(1-\delta)^{m-1}.
\end{align*}
Taking expectations yields $\bE\left[T_C1_{\{T_C=\tau_m\}}\right]\leq \bE\left[\tau_m\right](1-\delta)^{m-1}$ for all $m\geq 1$. Now since $T_C<\infty$ a.s. we get $1=\sum_{m=1}^{+\infty}1_{\{T_C=\tau_m\}}$ a.s. This yields that 
\[
    \bE[T]\leq \bE[T_C]=\sum_{m=1}^{\infty} \bE[T_C1_{\{T_C=\tau_m\}}]\leq \sum_{m=1}^{\infty}\bE[\tau_m](1-\delta)^{m-1}.
\]
The proof is complete.
\end{proof}
 Now, in view of Lemma \ref{lem:ETleqET_C}, it suffices to bound $\bE[\tau_m]$ by a sequence which can not grow too fast in $m$. Indeed, we show that \eqref{eq:driftequation} implies the following 
 \begin{equation}
     \bE[\tau_m]=\mathcal{O}(m).
 \end{equation}
\begin{theorem}(Low Regime)\label{thm:low}
Let $\{\btheta_k\}_{k=0}^\infty$ be a sequence generated by Algorithm~\ref{alg:SGD_termination} such that $\btheta_0=\bm{0}$. There exists positive constants $c,b$ and $M$ such that provided $\sigma \leq c\Vert \bmu \Vert$ the following holds. 
\begin{equation}
    \bE[T]\leq 2+\frac{2M^2}{b}\cdot\left(\Phi^c\left(\frac{\Vert \bmu \Vert}{\sigma} \right)+\frac{\alpha\sigma^3}{ \Vert \bmu \Vert }\cdot\frac{1}{\sqrt{2\pi}}\exp\left(-\frac{\Vert \bmu \Vert^2}{2\sigma^2} \right)+1 \right).
\end{equation}
Here the constants $c,b$ and $M$ are defined as follows:
\begin{enumerate}
    \item For the logistic loss, 
    \begin{equation}\label{eq:para_log}
    c=0.33,\quad b=\alpha\Vert \bmu \Vert^2,\quad \text{ and } M=501+640\alpha \Vert \bmu \Vert^2.
\end{equation}
\item For the hinge loss, 
\begin{equation}\label{eq:para_hinge}
    c=1.25,\quad b=\alpha\Vert \bmu \Vert^2,\quad \text{ and } M=501+782\alpha \Vert \bmu \Vert^2.
\end{equation}
\end{enumerate}
\end{theorem}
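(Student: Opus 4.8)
The plan is to combine the reduction already provided by Lemma~\ref{lem:ETleqET_C} with a Foster--Lyapunov drift argument for the pair $(C,V)$ from \eqref{eq:targetset_lownoise}--\eqref{eq:driftfunction_lownoise}, and then to collapse the resulting geometric series. First I would pin down the constant $\delta$ in \eqref{eq:probability_delta}. For any $\btheta \in C$ we have $\bmu^T\btheta \geq 1$, so the standardized threshold $\frac{1-\bmu^T\btheta}{\sigma\Vert\btheta\Vert} \leq 0$; since $\hat{\bxi}^T\btheta \sim N(\bmu^T\btheta,\sigma^2\Vert\btheta\Vert^2)$ by \eqref{eq:fact_affine}, the observation \eqref{app.fact:PhiProb} gives $\bP_{\hat{\bxi}}(\hat{\bxi}^T\btheta \geq 1) \geq \Phi(0) = \tfrac12$. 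Thus $\delta = \tfrac12$ is admissible for the low-variance target set, and Lemma~\ref{lem:ETleqET_C} reduces the theorem to bounding $\sum_{m\geq 1}\bE[\tau_m](\tfrac12)^{m-1}$, provided each $\bE[\tau_m]$ is finite and grows at most linearly in $m$.

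The core step is to verify the drift inequality \eqref{eq:driftequation} with $b=\alpha\Vert\bmu\Vert^2$ and $M$ as in \eqref{eq:para_log} and \eqref{eq:para_hinge}. Writing $s_k := \bmu^T\btheta_k$, the SGD update \eqref{eq:derivative_formula} projects onto $\bmu$ as $s_k = s_{k-1} + \alpha\,\bmu^T\bxi_k\, h(\bxi_k^T\btheta_{k-1})$, where $h=\frac{1}{1+\exp(\cdot)}$ for logistic (resp. $h=1_{\{\cdot\leq 1\}}$ for hinge). Expanding $V(\btheta_k) = (M-s_k)^2$ gives $\bE[V(\btheta_k)\mid\mathcal{F}_{k-1}] - V(\btheta_{k-1}) = -2(M-s_{k-1})\,\bE[\Delta s_k\mid\mathcal{F}_{k-1}] + \bE[(\Delta s_k)^2\mid\mathcal{F}_{k-1}]$, with $\Delta s_k := s_k - s_{k-1}$. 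On the event $\btheta_{k-1}\notin C$ (i.e. $s_{k-1}<1\leq M$) the factor $M-s_{k-1}>0$, and the first-order term is strictly negative because the conditional drift $\bE[\Delta s_k\mid \mathcal{F}_{k-1}]$ is bounded below by a positive multiple of $\Vert\bmu\Vert^2$ (the same computation underlying Lemma~\ref{lem:minimizers}). \emph{The main obstacle} is to control the second-order term $\bE[(\Delta s_k)^2\mid\mathcal{F}_{k-1}]$, which is $O(\alpha^2)$ and, unlike the first-order term, does not vanish; bounding it via the Gaussian moment identities in \eqref{fact:norm_Gaussians} and forcing it below the first-order gain is exactly what constrains the ratio $\sigma/\Vert\bmu\Vert \leq c$ and forces the large additive constant in $M$. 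This calculation must be carried out separately for logistic and hinge, which is why the constants in \eqref{eq:para_log} and \eqref{eq:para_hinge} differ.

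With the drift in hand I would invoke Lemma~\ref{lem:drift_from_meyn} to bound the hitting times. Since $\btheta_0=\bm{0}$ gives $V(\btheta_0)=M^2$, the supermartingale $V(\btheta_{k\wedge\tau_1}) + b\,(k\wedge\tau_1)$ yields $\bE[\tau_1]\leq M^2/b$. For the later entries I would bound each increment $\bE[\tau_m-\tau_{m-1}\mid\mathcal{F}_{\tau_{m-1}}]$ by the one mandatory step that leaves $C$ plus a return trip of expected length $\tfrac1b\bE[V(\btheta_{\tau_{m-1}+1})1_{\{\btheta_{\tau_{m-1}+1}\notin C\}}]$. Because $s_{\tau_{m-1}}\geq 1$, exiting $C$ after a single step requires $\bmu^T\bxi$ to be atypically negative, an event whose probability and truncated magnitude are precisely what produce the correction factor $E := \Phi^c\!\left(\tfrac{\Vert\bmu\Vert}{\sigma}\right) + \tfrac{\alpha\sigma^3}{\Vert\bmu\Vert}\tfrac{1}{\sqrt{2\pi}}\exp\!\left(-\tfrac{\Vert\bmu\Vert^2}{2\sigma^2}\right)$ appearing in the statement; this gives $\bE[\tau_m]\leq \tfrac{M^2}{b} + (m-1)\bigl(\tfrac{M^2}{b}E + 1\bigr)$.

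Finally, summing this estimate against $(\tfrac12)^{m-1}$ and using $\sum_{m\geq1}(\tfrac12)^{m-1}=2$ together with $\sum_{m\geq1}(m-1)(\tfrac12)^{m-1}=2$ collapses the series to $2+\tfrac{2M^2}{b}(E+1)$, which is exactly the claimed bound. I expect the drift verification of the second paragraph to be the crux; the hitting-time bookkeeping and the final summation are routine once the constants $c$, $b$, and $M$ are fixed.
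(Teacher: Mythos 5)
Your proposal is correct and follows essentially the same route as the paper: establish $\delta=\tfrac12$ on the target set $C=\{\btheta:\bmu^T\btheta\geq 1\}$, verify the drift inequality for $V(\btheta)=(M-\bmu^T\btheta)^2$ with $b=\alpha\Vert\bmu\Vert^2$ by separating the first- and second-order terms of the update projected onto $\bmu$, bound $\bE[\tau_1]\leq M^2/b$ and the increments $\bE[\tau_m-\tau_{m-1}]$ via Lemma~\ref{lem:drift_from_meyn} and the strong Markov property (with the correction factor arising from the Gaussian tail of $\bmu^T\bxi$ after the one step that exits $C$), and collapse the geometric series. Your identification of the drift verification as the crux, and of the second-order $O(\alpha^2)$ term as the source of the constraints on $c$ and $M$, matches the paper's Lemma~\ref{lem:driftequation} and Proposition~\ref{prp:Etaum_lownoise} exactly.
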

Therefore, on relatively separable data (\textit{i.e.} in the low variance regime), the expected waiting time before termination exponentially decreases as the data becomes more separable (\textit{i.e.} $\sigma \to 0$). 
We prove Theorem \ref{thm:low} in Section \ref{sec:theorem_angle}. The next theorem shows that the expected value of the stopping time is finite provided that the $\sigma > c\Vert \bmu \Vert$ and the step-size is small enough. 
\begin{theorem}\label{thm:high}(High Regime) Suppose that $\sigma > c\Vert \bmu \Vert$ where $c$ is defined in \eqref{eq:para_log} and \eqref{eq:para_hinge}. Then there exists a universal positive constant $A$ such that if the step-size $\alpha$ satisfies 
\begin{equation}
    \alpha \leq A\cdot\frac{\Vert \bmu \Vert^2}{\sigma^2(\Vert \bmu \Vert^2+d\sigma^2)},
\end{equation}
then it holds that $\bE[T]<+\infty$. In particular, the termination criterion occurs almost surely.
\end{theorem}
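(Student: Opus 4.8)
The plan is to rerun, in the high-variance setting, the same reduction that drives the low-variance analysis: by Lemma~\ref{lem:ETleqET_C} it suffices to produce a uniform termination probability $\delta>0$ on the target set $C$ of \eqref{eq:targetset_highnoise} (so that \eqref{eq:probability_delta} holds) together with return-time bounds $\bE[\tau_m]<\infty$ that grow only linearly, $\bE[\tau_m]=\mathcal{O}(m)$. Granting these, Lemma~\ref{lem:ETleqET_C} gives
\[
\bE[T]\le\bE[T_C]\le\sum_{m\ge1}\bE[\tau_m]\,(1-\delta)^{m-1}<\infty,
\]
since for any fixed $\delta>0$ the geometric factor $(1-\delta)^{m-1}$ dominates the linear growth of $\bE[\tau_m]$. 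The finiteness and linear growth of the return times will be extracted from the drift inequality \eqref{eq:driftequation} for the pair $(C,V)$ with $V(\btheta)=\frac{1}{2\alpha}\|\btheta-\btheta^*\|^2$, fed into the drift lemma (Lemma~\ref{lem:drift_from_meyn}); the small-step-size hypothesis is exactly what will make \eqref{eq:driftequation} hold.

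For the target-set property I would use \eqref{app.fact:PhiProb}, writing $\bP_{\hat\bxi}(\hat\bxi^T\btheta\ge1)$ as $\Phi^c$ of the standardized threshold $(1-\bmu^T\btheta)/(\sigma\|\btheta\|)$ and bounding that argument from above uniformly on $C$. Decomposing $\btheta=\rho\bmu+\tilde\btheta$, the constraint $|\rho\sigma^2-1|<1$ keeps $\bmu^T\btheta=\rho\|\bmu\|^2$ bounded and $\sigma\|\tilde\btheta\|\le c'$ keeps $\|\tilde\btheta\|$ bounded, so $\|\btheta\|$ is bounded above; provided $C$ is also bounded away from the origin, $\|\btheta\|$ is bounded below as well, the standardized threshold is then at most a finite constant, and $\delta:=\Phi^c(\text{that constant})>0$. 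The regime hypothesis $\sigma>c\|\bmu\|$ enters through the ratios $\|\bmu\|/\sigma$ appearing in these estimates.

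The heart of the argument is the drift inequality. From the SGD recursion \eqref{eq: SGD_update_expected_loss}, taking conditional expectations and using $\bE[\g_k\mid\mathcal{F}_{k-1}]=\nabla f(\btheta_{k-1})$ (legitimate for both losses by the differentiability remark following \eqref{optimization_problem}), one obtains the identity
\[
\bE[V(\btheta_k)\mid\mathcal{F}_{k-1}]-V(\btheta_{k-1})=-\nabla f(\btheta_{k-1})^T(\btheta_{k-1}-\btheta^*)+\tfrac{\alpha}{2}\,\bE\!\left[\|\g_k\|^2\mid\mathcal{F}_{k-1}\right].
\]
The second-moment term is controlled uniformly for both losses by $\|\g_k\|\le\|\bxi_k\|$ (the logistic factor $(1+e^{x})^{-1}$ and the hinge indicator are both bounded by $1$), so $\bE[\|\g_k\|^2\mid\mathcal{F}_{k-1}]\le\|\bmu\|^2+d\sigma^2$ by \eqref{fact:norm_Gaussians}. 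What remains is a uniform lower bound $\nabla f(\btheta)^T(\btheta-\btheta^*)\ge\beta>0$ for $\btheta\notin C$: since $\nabla f(\btheta^*)=\bm{0}$ and $f$ is strictly convex and coercive, the one-dimensional map $t\mapsto\nabla f(\btheta^*+tu)^Tu$ increases strictly from $0$ along every unit direction $u$, giving $\nabla f(\btheta)^T(\btheta-\btheta^*)\ge\varepsilon\min_{\|u\|=1}\nabla f(\btheta^*+\varepsilon u)^Tu=:\beta>0$ whenever $\|\btheta-\btheta^*\|\ge\varepsilon$, provided $C\supseteq\{\|\btheta-\btheta^*\|<\varepsilon\}$. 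The drift outside $C$ is then at most $-\beta+\frac{\alpha}{2}(\|\bmu\|^2+d\sigma^2)$, which is $\le-\beta/2=:-b$ exactly when $\alpha\le\beta/(\|\bmu\|^2+d\sigma^2)$; tracking how $\beta$ scales with $\|\bmu\|^2/\sigma^2$ in the high-variance regime yields the stated threshold $\alpha\le A\,\|\bmu\|^2/(\sigma^2(\|\bmu\|^2+d\sigma^2))$.

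I expect the main obstacle to be this lower bound, and in tandem the structural point that $C$ must genuinely be a bounded neighborhood of $\btheta^*$ bounded away from the origin, since both the bound $\nabla f(\btheta)^T(\btheta-\btheta^*)\ge\beta$ and the uniform $\delta>0$ degenerate otherwise. Care is needed here because for the logistic loss $\rho^*\sigma^2=2$ sits at the edge of the slab $0<\rho\sigma^2<2$; the clean way to handle this is to split the inner product into its $\bmu$-parallel part $(\bmu^T\nabla f(\btheta))(\rho-\rho^*)$ and perpendicular part $\tilde\g^T\tilde\btheta$, and to bound each using the strict monotonicity of the one-dimensional functions established in Lemma~\ref{lem:minimizers}, thereby pinning down a genuinely positive $\beta$ and its dependence on $\sigma,\|\bmu\|,d$. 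Once $b$ and $\beta$ are in hand, the final bookkeeping—promoting \eqref{eq:driftequation} to $\bE[\tau_m]<\infty$ and $\bE[\tau_m]=\mathcal{O}(m)$ through Lemma~\ref{lem:drift_from_meyn}, using that $V$ increases by a bounded amount in one step out of the bounded set $C$—is routine.
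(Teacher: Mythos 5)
Your skeleton is the same as the paper's: reduce to Lemma~\ref{lem:ETleqET_C} via a uniform termination probability $\delta>0$ on the target set \eqref{eq:targetset_highnoise}, establish the drift inequality \eqref{eq:driftequation} for $V(\btheta)=\frac{1}{2\alpha}\Vert\btheta-\btheta^*\Vert^2$, and convert it into $\bE[\tau_m]=\mathcal{O}(m)$ through Lemma~\ref{lem:drift_from_meyn}. Your one-step identity for $V$, the bound $\bE[\Vert\g_k\Vert^2\mid\mathcal{F}_{k-1}]\le\Vert\bmu\Vert^2+d\sigma^2$, and the $\delta$-argument on $C$ (which is indeed bounded and bounded away from the origin, since $\rho>\tfrac{1}{2}\rho^*$ there) all match the paper, up to cosmetic differences: the paper lower-bounds $f(\btheta_{k-1})-f(\btheta^*)$ via the convexity inequality of Lemma~\ref{lem:technical_convex_bound} rather than $\nabla f(\btheta_{k-1})^T(\btheta_{k-1}-\btheta^*)$ directly, and gets $\delta=\tfrac{1}{2}\Phi^c(\cdot)$ from the independence of $\bxi^T\bmu$ and $\bxi^T\tilde\btheta$ rather than from \eqref{app.fact:PhiProb}.

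The genuine gap is the uniform lower bound outside $C$. You replace it with a qualitative argument: strict convexity plus compactness of the unit sphere gives $\beta:=\varepsilon\min_{\Vert u\Vert=1}\nabla f(\btheta^*+\varepsilon u)^Tu>0$ on $\{\Vert\btheta-\btheta^*\Vert\ge\varepsilon\}$. This yields \emph{some} positive constant, but no control whatsoever on how $\beta$ depends on $\sigma$, $\Vert\bmu\Vert$, and $d$, whereas the theorem asserts a \emph{universal} constant $A$ and the explicit admissible step-size $\alpha\le A\Vert\bmu\Vert^2/(\sigma^2(\Vert\bmu\Vert^2+d\sigma^2))$; for that you must show the drift gain outside $C$ is at least of order $\Vert\bmu\Vert^2/\sigma^2$, uniformly over the high-variance regime. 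This quantitative estimate is the entire technical content of the paper's Lemmas~\ref{lem:lower_bound_1} and~\ref{lem:lower_bound_2}, which split $f(\btheta)-f(\btheta^*)$ into $\bigl(f(\btheta)-f(\rho\bmu)\bigr)+\bigl(f(\rho\bmu)-f(\btheta^*)\bigr)$, show both pieces are nonnegative, prove the first is at least $1$ when $\sigma\Vert\tilde\btheta\Vert\ge c'$ (via $\sinh/\cosh$ manipulations for logistic and the $\kappa$-function analysis for hinge), and prove the second is at least $A\Vert\bmu\Vert^2/\sigma^2$ when $\vert\rho-\rho^*\vert\ge\tfrac{1}{2}\rho^*$ (via explicit lower bounds on $g''$ near $\rho^*$ and Lemma~\ref{lem:convex_analysis_lemma}). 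You correctly flag this as the main obstacle and sketch the right kind of decomposition, but you do not carry it out, and the compactness route you offer in its place cannot produce the claimed scaling. Two smaller points: strict convexity of the hinge objective along arbitrary rays needs justification (its Hessian is a surface measure, not a classical Hessian), and in the return-time bookkeeping the one-step overshoot $V(\btheta_{\tau_{m-1}+1})$ is controlled in expectation, not almost surely, since $\Vert\g_k\Vert\le\Vert\bxi_k\Vert$ is unbounded.
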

It remains to determine whether the classifier at termination $\btheta_T$, has desirable accuracy. The scale-invariance of optimal classifiers means a classifier yields a lower probability of misclassification the closer its direction aligns with any optimal classifier. In view of this, it suffices to bound the absolute value of the inner product of any unit vector that is perpendicular to $\btheta^*$, $\bm{v}$ with $\btheta_T$. The following theorem establishes a bound on $\bE[\vert \bm{v}^T\btheta_T\vert]$. 
\begin{theorem}\label{thm:angle_bound}
Let $\btheta_0=\bm{0}$. Fix any unit vector $\bm{v}\in \R^d$ such that $\bm{v}^T\btheta^*=0$. Then the following estimate holds 
\begin{equation}
    \bE[\vert \bm{v}^T\btheta_T\vert]\leq \sigma \alpha \sqrt{\frac{2}{\pi}} \bE[T].
\end{equation}
\end{theorem}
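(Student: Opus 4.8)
The plan is to track the component of the iterate orthogonal to the optimal direction and to bound its growth by a Wald-type identity. First I would write the stochastic gradient in the common form $\nabla_{\btheta}\ell(\bxi_k^T\btheta_{k-1},1) = g_k\,\bxi_k$, where $g_k$ is a scalar: for the logistic loss $g_k = -1/(1+\exp(\bxi_k^T\btheta_{k-1}))$ and for the hinge loss $g_k = -1_{\{\bxi_k^T\btheta_{k-1}\le 1\}}$. In both cases $|g_k|\le 1$. Setting $S_k := \bm{v}^T\btheta_k$ and using $\btheta_0=\bm{0}$ (so $S_0=0$), the update \eqref{eq: SGD_update_expected_loss} gives the scalar recursion $S_k = S_{k-1} - \alpha g_k(\bm{v}^T\bxi_k)$, whence each increment satisfies $|S_k - S_{k-1}| \le \alpha|\bm{v}^T\bxi_k|$. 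Telescoping and the triangle inequality then yield the pathwise bound $|\bm{v}^T\btheta_T| = |S_T| \le \alpha\sum_{k=1}^T|\bm{v}^T\bxi_k|$.

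It remains to compute $\bE\bigl[\sum_{k=1}^T|\bm{v}^T\bxi_k|\bigr]$, and this is where the key step lies. I would rewrite the stopped sum as $\sum_{k=1}^\infty |\bm{v}^T\bxi_k|\,1_{\{T\ge k\}}$ and exchange sum and expectation (valid by Tonelli, since all terms are nonnegative). The crucial observation is that $\{T\ge k\} = \{T\le k-1\}^c \in \mathcal{F}_{k-1}$, because $T$ is a stopping time for the filtration \eqref{eq:sigma}, whereas the fresh sample $\bxi_k$ is independent of $\mathcal{F}_{k-1}$. Hence $|\bm{v}^T\bxi_k|$ and $1_{\{T\ge k\}}$ are independent, so each summand factors as $\bE[|\bm{v}^T\bxi_k|]\,\bP(T\ge k)$. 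Since $\bm{v}^T\btheta^*=0$ forces $\bm{v}^T\bmu=0$, the affine rule \eqref{eq:fact_affine} gives $\bm{v}^T\bxi_k\sim N(0,\sigma^2)$, and \eqref{fact:norm_Gaussians} yields $\bE[|\bm{v}^T\bxi_k|] = \sigma\sqrt{2/\pi}$. Summing and using $\bE[T]=\sum_{k=1}^\infty\bP(T\ge k)$ gives $\bE\bigl[\sum_{k=1}^T|\bm{v}^T\bxi_k|\bigr] = \sigma\sqrt{2/\pi}\,\bE[T]$, and combining with the pathwise bound proves the claim.

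The main obstacle is the decoupling in the second paragraph: one must verify precisely that the event $\{T\ge k\}$ is determined by information available up to time $k-1$ while the increment $\bxi_k$ is genuinely fresh, so that the product splits. This is really a disguised instance of Wald's identity; the only care needed is to confirm that $T$ (defined through the auxiliary samples $\hat{\bxi}_k$) is adapted to $\{\mathcal{F}_k\}$, so that $\{T\ge k\}\in\mathcal{F}_{k-1}$, and that the bound $|g_k|\le 1$ holds uniformly across both losses. Everything else — the telescoping, the Gaussian first absolute moment, and the tail-sum formula for $\bE[T]$ — is routine. Note the argument needs no finiteness of $\bE[T]$ to be valid: if $\bE[T]=\infty$ the bound is vacuous, and Theorems \ref{thm:low} and \ref{thm:high} supply the regimes in which it is finite.
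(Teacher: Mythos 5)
Your proof is correct, but it takes a different route from the paper's. The paper constructs the process $X_k := \vert \bm{v}^T\btheta_k\vert - k\sigma\alpha\sqrt{2/\pi}$, verifies that it is a supermartingale with conditional increments bounded in $\mathcal{L}^1$, and then invokes the optional stopping theorem (Theorem~\ref{thm:Durret_Martingale}) to conclude $\bE[X_T]\le \bE[X_0]=0$. You instead prove the pathwise bound $\vert\bm{v}^T\btheta_T\vert\le \alpha\sum_{k=1}^{T}\vert\bm{v}^T\bxi_k\vert$ by telescoping, and then evaluate the expectation of the stopped sum by a Wald-type argument: writing it as $\sum_{k\ge 1}\bE[\vert\bm{v}^T\bxi_k\vert 1_{\{T\ge k\}}]$, using Tonelli, and decoupling via $\{T\ge k\}\in\mathcal{F}_{k-1}$ together with the independence of the fresh sample $\bxi_k$ from $\mathcal{F}_{k-1}$. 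Both arguments hinge on the same two facts --- the increment bound $\alpha\vert\bm{v}^T\bxi_k\vert$ and the first absolute moment $\bE[\vert\bm{v}^T\bxi_k\vert]=\sigma\sqrt{2/\pi}$, which requires the observation (which you correctly make) that $\bm{v}^T\btheta^*=0$ forces $\bm{v}^T\bmu=0$ since $\btheta^*=\rho^*\bmu$ with $\rho^*>0$. What your route buys is that it bypasses the hypotheses of the optional stopping theorem entirely: the paper's application of Theorem~\ref{thm:Durret_Martingale} formally requires $\bE[T]<\infty$, which is not assumed in the statement of Theorem~\ref{thm:angle_bound} and must be imported from Theorems~\ref{thm:low} and~\ref{thm:high}, whereas your Tonelli computation is valid unconditionally (the inequality being vacuous when $\bE[T]=\infty$, as you note). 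Your identification of $\{T\ge k\}\in\mathcal{F}_{k-1}$ as the crux is exactly right, and your verification that $T$ is adapted to the filtration \eqref{eq:sigma} through the auxiliary samples $\hat{\bxi}_k$ is the one point where care is genuinely needed.
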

In the low variance regime by combining Theorem \ref{thm:low} and \ref{thm:angle_bound} for a fixed step-size $\alpha$ it holds that $\bE[\vert \bm{v}^T\btheta\vert]\leq \mathcal{O}(\sigma)$. Thus, the more separable the data set is, the more accurate the classifier $\btheta_T$ is on average. In the high variance regime, Theorem \ref{thm:high} yields a very loose bound. Yet despite this, our numerical result in Section \ref{sec:Num_Experiment} show promising accuracy of \eqref{eq: termination_test} in this case as well. We conjecture that the inequality can be significantly strengthened. 
\subsection{Low regime, proof of Theorem \ref{thm:low}}\label{sec:proof_low}
In this section, we investigate the low variance regime. We consider the target set $C$ and function $V$ defined in \eqref{eq:targetset_lownoise} and \eqref{eq:driftfunction_lownoise} respectively, \textit{i.e.}
\begin{equation}\label{eq:CV_low_recall}
      C = \{\btheta: \bmu^T\btheta\geq 1 \}, \quad V(\btheta)=\left(M-\bmu^T\btheta\right)^2,
\end{equation}
where $M$ is a constant to be determined. 
Next lemma shows that the drift equation \eqref{eq:driftequation} holds for the pair $(C,V)$.  \begin{lemma}[Drift equation]\label{lem:driftequation} Consider the SGD algorithm and let the set $C$ and the function $V$ be as in \eqref{eq:CV_low_recall}. Define the constants $c,b,M$ as in \eqref{eq:para_log} and \eqref{eq:para_hinge}. Then provided that $\sigma \leq c\Vert \bmu \Vert$, the function $V$ is a drift function with respect to the set $C$ and it satisfies the drift equation \eqref{eq:driftequation} with the constant $b$.
\end{lemma}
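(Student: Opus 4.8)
The plan is to expand the drift directly from the SGD update and reduce the inequality \eqref{eq:driftequation} to two scalar estimates about the fresh sample $\bxi_k$. First I would write the update in a unified form. For the logistic loss $\nabla_{\btheta}\ell(\bxi_k^T\btheta_{k-1},1)=-\bxi_k/(1+\exp(\bxi_k^T\btheta_{k-1}))$ and for the hinge loss $\nabla_{\btheta}\ell(\bxi_k^T\btheta_{k-1},1)=-\bxi_k 1_{\{\bxi_k^T\btheta_{k-1}\le 1\}}$, so in both cases $\btheta_k=\btheta_{k-1}+\alpha h_k\bxi_k$ with a scalar gain $h_k\in[0,1]$ that is measurable in $\bxi_k$ and $\btheta_{k-1}$. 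Writing $u_k:=\bmu^T\bxi_k$ and $a:=M-\bmu^T\btheta_{k-1}$ (which is $\mathcal{F}_{k-1}$-measurable), I get $\bmu^T\btheta_k=\bmu^T\btheta_{k-1}+\alpha h_k u_k$, and after the $V(\btheta_{k-1})=a^2$ term cancels,
\begin{equation*}
\bE[V(\btheta_k)\mid\mathcal{F}_{k-1}]-V(\btheta_{k-1})
= -2a\alpha\,\bE[h_k u_k\mid\mathcal{F}_{k-1}]
+\alpha^2\,\bE[h_k^2 u_k^2\mid\mathcal{F}_{k-1}].
\end{equation*}
Since $\bxi_k$ is independent of $\mathcal{F}_{k-1}$, the two conditional expectations are ordinary Gaussian integrals in which $\btheta_{k-1}$ enters only as a fixed parameter.

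The heart of the argument is a \emph{signal versus noise} comparison: a lower bound $\bE[h_k u_k\mid\mathcal{F}_{k-1}]\ge c_1\norm{\bmu}^2$ on the drift term and an upper bound $\bE[h_k^2 u_k^2\mid\mathcal{F}_{k-1}]\le c_2\norm{\bmu}^4$ on the variance term, both holding uniformly over admissible $\btheta_{k-1}$ in the low-variance regime $\sigma\le c\norm{\bmu}$. For the noise bound I would use $h_k^2\le h_k\le 1$ together with $u_k\sim N(\norm{\bmu}^2,\sigma^2\norm{\bmu}^2)$ from \eqref{eq:fact_affine}, giving $\bE[h_k^2 u_k^2\mid\mathcal{F}_{k-1}]\le\bE[u_k^2]=\norm{\bmu}^4+\sigma^2\norm{\bmu}^2\le(1+c^2)\norm{\bmu}^4$. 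For the signal bound I would orthogonally decompose $\btheta_{k-1}=\rho\bmu+\tilde{\btheta}$ with $\bmu^T\tilde{\btheta}=0$, so that by \eqref{eqn:fact_independence} the scalars $u_k=\bmu^T\bxi_k$ and $w_k:=\tilde{\btheta}^T\bxi_k\sim N(0,\sigma^2\norm{\tilde{\btheta}}^2)$ are independent; then $h_k$ is a function of $\rho u_k+w_k$ (and the level $1$ for hinge), and I would evaluate $\bE[h_k u_k]=\bE_{u_k}\!\big[u_k\,\bE_{w_k}[h_k]\big]$ by integrating out $w_k$ first.

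The main obstacle is establishing this signal lower bound uniformly over every direction of $\btheta_{k-1}$. The difficulty is that the gain $h_k$ is anticorrelated with $u_k$ (a large $\bxi_k^T\btheta_{k-1}$ both depresses the gain and, when $\rho>0$, tends to accompany a large $u_k$), so one must rule out the possibility that the weighting cancels the positive mean of $u_k$. Using $\btheta_{k-1}\notin C$, i.e. $\rho\norm{\bmu}^2=\bmu^T\btheta_{k-1}<1$, together with the low-variance assumption to keep $u_k$ concentrated on the positive axis, I would show that the inner average $\bE_{w_k}[h_k]$ stays bounded away from $0$ on the bulk of the $u_k$-distribution; the extreme cases ($\tilde{\btheta}=0$ with $\rho$ near $1/\norm{\bmu}^2$, and $\norm{\tilde{\btheta}}\to\infty$) are handled by the monotonicity of the logistic gain and the limit $\bE_{w_k}[h_k]\to\tfrac12$, respectively. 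This is the step where the explicit constants (the threshold $c$ and the values $c_1,c_2$) must be tracked carefully.

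Finally I would combine the pieces. On $\{\btheta_{k-1}\notin C\}$ we have $a>M-1>0$, so the drift is at most $-2(M-1)\alpha c_1\norm{\bmu}^2+\alpha^2 c_2\norm{\bmu}^4$. Inserting $M-1=500+640\alpha\norm{\bmu}^2$ in the logistic case (resp. $500+782\alpha\norm{\bmu}^2$ for hinge) turns this into $-1000c_1\alpha\norm{\bmu}^2+(c_2-1280c_1)\alpha^2\norm{\bmu}^4$ (resp. $c_2-1564c_1$), which is $\le-\alpha\norm{\bmu}^2=-b$ as soon as $1000c_1\ge 1$ and $c_2\le 1280c_1$ (resp. $1564c_1$). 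Thus the constant offset $501$ guarantees the $\norm{\bmu}^2$-term alone beats the target, and the slopes $640$ and $782$ are chosen precisely so the $\alpha^2\norm{\bmu}^4$ coefficient is nonpositive; this establishes the drift equation \eqref{eq:driftequation} with $b=\alpha\norm{\bmu}^2$ under $\sigma\le c\norm{\bmu}$, completing the proof.
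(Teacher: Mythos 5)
Your skeleton is the same as the paper's: expand $V(\btheta_k)-V(\btheta_{k-1})$ through the update $\btheta_k=\btheta_{k-1}+\alpha h_k\bxi_k$ with $h_k\in[0,1]$, lower-bound the linear (signal) term, upper-bound the quadratic term by $\bE[u_k^2]=\norm{\bmu}^4+\sigma^2\norm{\bmu}^2$, and then verify that the choices of $M$ and $c$ make the whole thing $\le -\alpha\norm{\bmu}^2$. Your quadratic bound and your final bookkeeping (the roles of the offset $501$ and the slopes $640$, $782$) are correct and match the paper exactly, \emph{assuming} the signal bound $\bE[h_ku_k\mid\mathcal{F}_{k-1}]\ge c_1\norm{\bmu}^2$ with $c_1=0.001$.

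That signal bound is precisely the step you do not carry out, and as written it is a genuine gap: you correctly identify the anticorrelation between $h_k$ and $u_k$ as the obstacle, but your proposed resolution (decompose $\btheta_{k-1}=\rho\bmu+\tilde{\btheta}$, integrate out $w_k=\tilde{\btheta}^T\bxi_k$ first, argue that $\bE_{w_k}[h_k]$ is bounded away from zero ``on the bulk,'' and treat the extremes by monotonicity and limits) is only a program, not an estimate — and it is a delicate program, since the bound must be uniform over all $\rho<1/\norm{\bmu}^2$ (including $\rho$ near $1/\norm{\bmu}^2$ and $\rho$ arbitrarily negative) and all $\norm{\tilde{\btheta}}$, and must also control the contribution of the negative tail of $u_k$ where $h_k u_k<0$. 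The paper closes this step with a much shorter device: decompose the \emph{sample} rather than the iterate, writing $\bxi_k=\bmu+\sigma\bm{\psi}_k$ so that $u_k=\norm{\bmu}^2+\sigma\norm{\bmu}\psi_k$ with $\psi_k\sim N(0,1)$. Then $\bE[h_ku_k\mid\mathcal{F}_{k-1}]=\norm{\bmu}^2\,\bE[h_k\mid\mathcal{F}_{k-1}]+\sigma\norm{\bmu}\,\bE[h_k\psi_k\mid\mathcal{F}_{k-1}]$. The noise term is bounded worst-case using $0\le h_k\le 1$, namely $\bE[h_k\psi_k]\ge\bE[\psi_k1_{\{\psi_k<0\}}]=-1/\sqrt{2\pi}$. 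The main term is bounded below by restricting to the event $\{\bxi_k^T\btheta_{k-1}\le\bmu^T\btheta_{k-1}\}$, which has probability exactly $\tfrac12$ because $\bmu^T\btheta_{k-1}$ is the median of $\bxi_k^T\btheta_{k-1}$; on that event $\bmu^T\btheta_{k-1}<1$ (from $\btheta_{k-1}\notin C$) gives $h_k\ge 1/(1+e)$ for logistic and $h_k=1$ for hinge, so $\bE[h_k]\ge\tfrac{1}{2(1+e)}$ (resp.\ $\tfrac12$). Combining, $\bE[h_ku_k\mid\mathcal{F}_{k-1}]\ge\tfrac{\norm{\bmu}^2}{2(1+e)}-\tfrac{\sigma\norm{\bmu}}{\sqrt{2\pi}}\ge 0.001\norm{\bmu}^2$ under $\sigma\le 0.33\norm{\bmu}$ (and the analogous computation under $\sigma\le1.25\norm{\bmu}$ for hinge). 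If you substitute this one-line median argument for your ``integrate out $w_k$ first'' plan, the rest of your proof goes through verbatim.
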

\begin{proof}For simplicity we write $\mathcal{F}_{-1}:=\sigma\left(\{\btheta_0=\btheta\} \right)$. Fix $k\geq 1$ and write $\bxi_k=\bmu+\sigma\bm{\psi}_k$ with $\bm{\psi}_k \sim N(0,I_d)$. Denote $\psi_k:=\frac{\bmu^T\bm{\psi}_k}{\Vert\bmu \Vert}$, thus $\psi_k\sim N(0,1)$. In order to show that the function $V$ satisfies the drift equation \eqref{eq:driftequation}, it suffices to assume $\btheta_{k-1}\not\in C$; in particular, this means $\btheta_{k-1}^T\bmu<1$. 
\item 
\textbf{Logistic loss.} By expanding out the term using the update formula, we get the following
\begin{align} \label{eq: low_noise_blah_1}
    V(\btheta_{k})= V(\btheta_{k-1})-\frac{2\alpha \bmu^T\bxi_k(M-\bmu^T\btheta_{k-1})}{1+\exp(\bxi_k^T\btheta_{k-1})}+\frac{\alpha^2(\bmu^T\bxi_k)^2}{(1+\exp(\bxi_k^T\btheta_{k-1}))^2}.
\end{align}
We have
\begin{align*}
 &\bE_{\bxi_k}\left[ \frac{\bmu^T\bxi_k}{1+\exp(\bxi_k^T\btheta_{k-1})}|\mathcal{F}_{k-1}\right]\\
 & \qquad =\Vert\bmu\Vert^2\bE_{\bxi_k}\left[ \frac{1}{1+\exp(\bxi_k^T\btheta_{k-1})}|\mathcal{F}_{k-1}\right]+\sigma \Vert \bmu\Vert\bE_{\bxi_k,\psi_k}\left[ \frac{\psi_k}{1+\exp(\bxi_k^T\btheta_{k-1})}|\mathcal{F}_{k-1}\right]\\
 &  \qquad \geq \Vert \bmu \Vert^2 \bE_{\bxi_k}\left[ \frac{1}{1+\exp(\bxi_k^T\btheta_{k-1})}|\mathcal{F}_{k-1}\right]+\sigma \Vert \bmu\Vert\bE_{\psi_k}\left[\psi_k1_{\{\psi_k<0\}} \right]\\
  &  \qquad = \Vert \bmu \Vert^2 \bE_{\bxi_k}\left[ \frac{1}{1+\exp(\bxi_k^T\btheta_{k-1})} \left ( 1_{\{\bmu^T \btheta_{k-1} \geq  \bxi_k^T \btheta_{k-1}\}} + 1_{\{\bmu^T \btheta_{k-1} < \bxi_k^T \btheta_{k-1}\}} \right ) |\mathcal{F}_{k-1}\right] -\sigma \Vert \bmu\Vert \sqrt{\frac{1}{2\pi}}\\
  &  \qquad \geq \frac{\Vert \bmu\Vert^2}{1+\exp(\bmu^T\btheta_{k-1})}\bE_{\bxi_k}\left[ 1_{\{\bmu^T \btheta_{k-1} \geq  \bxi_k^T \btheta_{k-1}\}} |\mathcal{F}_{k-1} \right]-\sigma \Vert \bmu\Vert \sqrt{\frac{1}{2\pi}}\\
 &\qquad \geq \frac{\Vert \bmu\Vert^2}{2(1+e)}-\sigma \Vert \bmu\Vert \sqrt{\frac{1}{2\pi}}\\
 & \qquad \geq 0.001\Vert \bmu \Vert^2 .
\end{align*}
Here the first inequality follows from $\bE[X] \ge \bE[X1_{\{X<0\}}]$ and $1+ \exp(\bxi_k^T\btheta_{k-1}) \ge 1$, the second equation from \eqref{fact:norm_Gaussians}, and the second to last from the observation that for any $X$ normally distributed, $\bP(\bE[X] \ge X) = 1/2$ and $\bxi_k^T\btheta_{k-1} \sim N(\bmu^T\btheta_{k-1}, \sigma^2 \norm{\btheta_{k-1}}^2)$ and $\bmu^T\btheta_{k-1} < 1$. The last inequality uses the assumption $\sigma \le 0.33 \norm{\bmu}$. By taking the conditional expectations of \eqref{eq: low_noise_blah_1} combined with the above sequence of inequalities, we deduce the following bound
\begin{align*}
    &\bE\left[V(\btheta_{k})-V(\btheta_{k-1})|\mathcal{F}_{k-1}\right]\\
    & \quad \qquad = \bE_{\bxi_k} \left [-\frac{2\alpha \bmu^T\bxi_k(M-\bmu^T\btheta_{k-1})}{1+\exp(\bxi_k^T\btheta_{k-1})} | \mathcal{F}_{k-1} \right]  + \bE_{\bxi_k} \left [\frac{\alpha^2(\bmu^T\bxi_k)^2}{(1+\exp(\bxi_k^T\btheta_{k-1}))^2} | \mathcal{F}_{k-1} \right]\\
    & \quad \qquad \leq -0.002(M-1)\alpha \Vert \bmu \Vert^2+ \alpha^2\Vert \bmu \Vert^2\left(\Vert \bmu\Vert^2+\sigma^2 \right)\\
    & \quad \qquad = \alpha \Vert \bmu \Vert^2\left[ -0.002(M-1)+\alpha\left(\Vert \bmu \Vert^2+\sigma^2\right)\right].
\end{align*}
Here the first inequality follows from $\bmu^T\btheta_{k-1} < 1$ and by upper bounding $\frac{(\bmu^T\bxi_k)^2}{(1+\exp(\bxi_k^T\btheta_{k-1}))^2}$ with $(\bmu^T\bxi_k)^2$ and then applying \eqref{fact:norm_Gaussians}.
A quick computation after plugging in the value of $M$ and the bound $\sigma\leq 0.33\Vert \bmu \Vert$ from \eqref{eq:para_log} yields the drift equation \eqref{eq:driftequation} with $b=\alpha \Vert \bmu \Vert^2$.
\item \textbf{Hinge loss.} By expanding out the term using the update formula, we get the following
\begin{equation}\label{eq:Vequation_hinge}
   V(\btheta_k)=V(\btheta_{k-1})-2\alpha (M-\bmu^T\btheta_{k-1})\bmu^T\bxi_k1_{\{\bxi_k^T\btheta_{k-1}\leq 1\}}+\alpha^2(\bmu^T\bxi_k)^21_{\{\bxi_k^T\btheta_{k-1}\leq 1\}}.
\end{equation}
We have
\begin{align*}
    \bE_{\bxi_k}[1_{\{ \bxi_k^T\btheta_{k-1}\leq 1\}}\bmu^T\bxi_k|\mathcal{F}_{k-1}] &=\Vert \bmu\Vert^2\bE_{\bxi_k}[1_{\{ \bxi_k^T\btheta_{k-1}\leq 1\}}|\mathcal{F}_{k-1}]+\sigma\Vert \bmu \Vert\bE_{\bxi_k,\psi_k}[1_{\{ \bxi_k^T\btheta_{k-1}\leq 1\}}\psi_k|\mathcal{F}_{k-1}]\\ 
    & \geq \frac{1}{2}\Vert \bmu \Vert^2+\sigma\Vert \bmu \Vert\bE_{\psi_k}[\psi_k1_{\{\psi_k<0 \}}]\\
    & = \frac{1}{2}\Vert \bmu \Vert^2-\sigma\Vert \bmu \Vert\sqrt{\frac{1}{2\pi}} \\&
    \geq 
    0.001 \Vert \bmu \Vert^2.
\end{align*}
Here the first inequality follows from $1_{\{\bxi_k^T\btheta_{k-1}\leq \bmu^T\btheta_{k-1}\}}\leq 1_{\{\bxi_k^T\btheta_{k-1}\leq 1\}}$ and  $\bE_{\bxi_k}[1_{\{\bxi_k^T\btheta_{k-1}\leq \bmu^T\btheta_{k-1}\}}]=\tfrac{1}{2}$, and the second from \eqref{fact:norm_Gaussians}. The last inequality uses the assumption $\sigma \leq 1.25 \Vert \bmu \Vert$. By taking conditional expectations of \eqref{eq:Vequation_hinge} combined with the above sequence of inequalities, we deduce the bound
\begin{equation*}
\begin{aligned}
\bE[V(\btheta_k)-V(\btheta_{k-1})|\mathcal{F}_{k-1}]&=\bE_{\bxi_k}\left[-2\alpha(M-\bmu^T\btheta_{k-1})1_{\{\bxi_k^T\btheta_{k-1}\leq 1\}}|\mathcal{F}_{k-1}\right]+\bE_{\bxi_k}\left[\alpha^2(\bmu^T\bxi_k)^21_{\{\bxi_k^T\btheta_{k-1}\leq 1\}}|\mathcal{F}_{k-1}\right]
\\
&\leq \alpha \Vert \bmu \Vert^2\left[ -0.002(M-1)+\alpha\left(\Vert \bmu \Vert^2+\sigma^2\right)\right].
\end{aligned}
\end{equation*}
A quick computation after plugging in the value of $M$ and the bound $\sigma \leq 1.25 \Vert \bmu \Vert$ yields the desired result.
\end{proof}
 Recall, the stopping times $\tau_m$ denote the $m^{th}$ time that the SGD iterates enter the target set $C$. We show that $\bE[\tau_m]=\mathcal{O}(m)$. To do so, we begin by stating a lemma that gives a bound on the stopping time $\tilde{\tau}_1$ starting from any $\btheta_0$. In other words, for an arbitrary starting $\btheta_0$, we define 
 \[
 \tilde{\tau}_1:=\inf\{k>0:\btheta_k\in C\}.
 \]
\begin{lemma}[\cite{meyn2012markov}, Theorem 11.3.4]\label{lem:drift_from_meyn} Suppose that $V:\R^d\rightarrow [0,+\infty)$ is a drift function with respect to some target set $C$ \textit{i.e.} for some constant $b\in (0,+\infty)$ the drift equation \eqref{eq:driftequation} holds. The following is true
\begin{equation}
    \bE[\tilde{\tau}_1|\btheta_0=\btheta]\leq \tfrac{1}{b}V(\btheta).
\end{equation}
\end{lemma}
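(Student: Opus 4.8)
The plan is to run the standard Foster--Lyapunov (drift) argument: convert the one-step expected decrease of $V$ guaranteed by \eqref{eq:driftequation} into a nonnegative supermartingale whose expectation controls $\bE[\tilde\tau_1]$. Write $\tilde\tau:=\tilde\tau_1$ and take $\btheta_0=\btheta$. It suffices to treat the case $\btheta\notin C$ (the relevant one, since in the applications $\btheta_0=\bm{0}\notin C$; if $\btheta\in C$ the bound is trivial after observing the first step). First I would introduce the stopped process
\[
M_k := V(\btheta_{k\wedge\tilde\tau}) + b\,(k\wedge\tilde\tau), \qquad k\ge 0,
\]
which is $\mathcal{F}_k$-measurable, nonnegative (because $V\ge 0$ and $b>0$), integrable at each finite $k$ (the Gaussian increments of the SGD update give $\btheta_k$ finite second moments, hence $V(\btheta_{k\wedge\tilde\tau})$ is integrable), and satisfies $M_0=V(\btheta)$.

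The key step is to verify that $\{M_k\}$ is a supermartingale for $\{\mathcal{F}_k\}$. On the event $\{\tilde\tau\le k\}\in\mathcal{F}_k$ the process is frozen, so $M_{k+1}=M_k$ and nothing is to check. On the complementary event $\{\tilde\tau>k\}\in\mathcal{F}_k$ one has $\btheta_k\notin C$ (by the definition of $\tilde\tau$ for $k\ge 1$, and by the standing assumption $\btheta_0\notin C$ for $k=0$), while $k\wedge\tilde\tau=k$ and $(k+1)\wedge\tilde\tau=k+1$; hence, pulling the $\mathcal{F}_k$-measurable indicator out of the conditional expectation,
\[
\bE[M_{k+1}-M_k\mid\mathcal{F}_k]\,1_{\{\tilde\tau>k\}}
=\bigl(\bE[V(\btheta_{k+1})\mid\mathcal{F}_k]-V(\btheta_k)+b\bigr)\,1_{\{\tilde\tau>k\}}\le 0,
\]
where the inequality is exactly the index-shifted drift condition \eqref{eq:driftequation} applied at $\btheta_k\notin C$. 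Combining both events gives $\bE[M_{k+1}\mid\mathcal{F}_k]\le M_k$, establishing the supermartingale property.

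From here the bound is read off directly: the supermartingale property yields $\bE[M_k]\le\bE[M_0]=V(\btheta)$ for every $k$, and discarding the nonnegative term $\bE[V(\btheta_{k\wedge\tilde\tau})]\ge 0$ leaves $b\,\bE[k\wedge\tilde\tau]\le V(\btheta)$. Since $k\wedge\tilde\tau\uparrow\tilde\tau$ monotonically as $k\to\infty$, the Monotone Convergence Theorem gives $\bE[\tilde\tau]=\lim_{k\to\infty}\bE[k\wedge\tilde\tau]\le V(\btheta)/b$, which is the claim.

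The main obstacle---indeed the only nonroutine point---is the careful verification of the supermartingale property on $\{\tilde\tau>k\}$: one must correctly identify the stopped indices, confirm $\{\tilde\tau>k\}\in\mathcal{F}_k$ so the indicator factors through the conditioning, and invoke \eqref{eq:driftequation} precisely where $\btheta_k\notin C$; the boundary case $k=0$ is what forces the harmless reduction to $\btheta\notin C$. Everything afterward (using $V\ge 0$ to drop the Lyapunov term and monotone convergence to pass $k\to\infty$) is mechanical. Since this statement is quoted verbatim from \cite{meyn2012markov}, one could simply cite it, but the self-contained drift argument above is short enough to warrant inclusion.
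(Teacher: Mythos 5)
Your argument is correct and is the standard Foster--Lyapunov proof; note, however, that the paper does not prove this lemma at all---it is quoted from \cite{meyn2012markov} (Theorem 11.3.4)---so there is no in-paper proof to compare against. Your stopped-process construction $M_k = V(\btheta_{k\wedge\tilde{\tau}_1}) + b\,(k\wedge\tilde{\tau}_1)$, the verification of the supermartingale property on $\{\tilde{\tau}_1 > k\}$ via the drift inequality \eqref{eq:driftequation}, and the passage to the limit by monotone convergence are exactly the mechanism behind the cited theorem, and supplying them makes the paper more self-contained. One small inaccuracy: your parenthetical claim that the case $\btheta \in C$ is ``trivial'' is not right. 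Since $\tilde{\tau}_1 \ge 1$ always, while $V(\btheta)/b$ can be smaller than $1$, the stated bound can genuinely fail for $\btheta \in C$ (the drift condition says nothing about a step initiated inside $C$). This does not affect the paper, which only invokes the lemma at initial points outside $C$ ($\btheta_0 = \bm{0}$ in \eqref{eq:tau1}, and, in Proposition~\ref{prp:Etaum_lownoise}, on the event $\bmu^T\btheta_{\tau_{m-1}+1} < 1$), but the restriction $\btheta \notin C$ should be recorded as a hypothesis of the lemma rather than dismissed as a trivial case.
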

We establish upper bounds on $\bE[\tau_m]$ for $m\geq 1$ in the following proposition. 
\begin{proposition}(Bound on $\bE[\tau_m]$)\label{prp:Etaum_lownoise}
Let $\btheta_0=\bm{0}$ and assume the notation and assumptions of Lemma \ref{lem:driftequation} hold. The following is true for all $m\geq 1$
\begin{align}
    \bE[\tau_m]\leq (m-1) \left ( 1+\frac{M^2}{b}\cdot\Phi^c\left(\frac{\Vert \bmu \Vert}{\sigma} \right)+\frac{\alpha\sigma^3 M^2}{ \Vert \bmu \Vert b}\cdot\frac{1}{\sqrt{2\pi}}\exp\left(-\frac{\Vert \bmu \Vert^2}{2\sigma^2} \right) \right ) +\frac{M^2}{b}.
\end{align}
\end{proposition}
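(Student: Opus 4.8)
The plan is to prove the bound by induction on $m$, viewing $\{\btheta_k\}_{k=0}^\infty$ as a time-homogeneous Markov chain. This is legitimate because the step-size is constant and the samples $\bxi_k$ are i.i.d.; the auxiliary samples $\hat{\bxi}_k$ affect only the definition of $T$, not the evolution of $\btheta_k$, so the hitting times $\tau_m$ in \eqref{eq:tau1}--\eqref{eq:taum} depend on $\{\btheta_k\}$ alone. Lemma~\ref{lem:driftequation} supplies the drift equation \eqref{eq:driftequation}, which is precisely the hypothesis needed to invoke Lemma~\ref{lem:drift_from_meyn}. The base case is then immediate: since $\btheta_0=\bm{0}$ gives $V(\bm{0})=M^2$, Lemma~\ref{lem:drift_from_meyn} yields $\bE[\tau_1]\le\tfrac{1}{b}V(\bm{0})=\tfrac{M^2}{b}$, matching the $m=1$ case. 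For $m\ge 2$ I would write $\tau_m=\tau_{m-1}+(\tau_m-\tau_{m-1})$ and control the increment by conditioning on $\mathcal{F}_{\tau_{m-1}}$.

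The core step is the per-visit increment bound
\[
\bE[\tau_m-\tau_{m-1}\mid\mathcal{F}_{\tau_{m-1}}]\le 1+\tfrac{1}{b}\,\bE\!\left[V(\btheta_{\tau_{m-1}+1})\,1_{\{\btheta_{\tau_{m-1}+1}\notin C\}}\mid\mathcal{F}_{\tau_{m-1}}\right].
\]
At time $\tau_{m-1}$ the iterate lies in $C$; taking one further SGD step, if $\btheta_{\tau_{m-1}+1}\in C$ then $\tau_m=\tau_{m-1}+1$ and the increment equals $1$, whereas if the chain has left $C$ then, by the strong Markov property and Lemma~\ref{lem:drift_from_meyn} restarted at the random point $\btheta_{\tau_{m-1}+1}$, the conditional expected time to return to $C$ is at most $\tfrac{1}{b}V(\btheta_{\tau_{m-1}+1})$. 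Summing this bound over $m$ and adding the base case produces exactly the claimed linear-in-$m$ estimate, once the one-step leakage term $\bE[V(\btheta_{\tau_{m-1}+1})1_{\{\notin C\}}\mid\mathcal{F}_{\tau_{m-1}}]$ is bounded by $M^2\Phi^c(\tfrac{\norm{\bmu}}{\sigma})+\tfrac{\alpha\sigma^3 M^2}{\norm{\bmu}}\tfrac{1}{\sqrt{2\pi}}\exp(-\tfrac{\norm{\bmu}^2}{2\sigma^2})$.

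The main obstacle is this leakage estimate, which is also the only place where the Gaussian assumption enters. I would set $s:=\bmu^T\btheta_{\tau_{m-1}+1}$ and $g:=\bmu^T\bxi_{\tau_{m-1}+1}$, noting $g\sim N(\norm{\bmu}^2,\sigma^2\norm{\bmu}^2)$ by \eqref{eq:fact_affine}. For both losses the update changes $\bmu^T\btheta$ by $\alpha c g$ with $c\in[0,1]$ (namely $c=(1+\exp(\bxi^T\btheta_{\tau_{m-1}}))^{-1}$ for logistic and $c=1_{\{\bxi^T\btheta_{\tau_{m-1}}\le 1\}}$ for hinge). Since $\bmu^T\btheta_{\tau_{m-1}}\ge 1$, on the event $\{s<1\}$ one must have $g<0$, so $\{s<1\}\subseteq\{g<0\}$, and moreover $s\ge\bmu^T\btheta_{\tau_{m-1}}-\alpha|g|\ge 1-\alpha|g|$, giving $0<M-s\le(M-1)+\alpha|g|$. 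Expanding $(M-s)^2$ and integrating the moments of $g$ against $1_{\{g<0\}}$ yields a leading term bounded by $M^2\bP(g<0)=M^2\Phi^c(\norm{\bmu}/\sigma)$ together with lower-order contributions in $\alpha|g|$; these are evaluated using the truncated-normal moments and then consolidated with the standard tail inequality $\Phi^c(x)\le\varphi(x)/x$, which turns a residual $\Phi^c(\norm{\bmu}/\sigma)$ into the density $\tfrac{1}{\sqrt{2\pi}}\exp(-\norm{\bmu}^2/2\sigma^2)$ and accounts for the $\sigma^3/\norm{\bmu}$ scaling of the correction. The delicate points are justifying the strong-Markov restart at the random time $\tau_{m-1}+1$ (checking that $\tau_{m-1}$ is a stopping time for $\{\btheta_k\}$ and that $V(\btheta_{\tau_{m-1}+1})$ is integrable so Lemma~\ref{lem:drift_from_meyn} applies at the restart) and pushing the truncated-Gaussian computation through with precisely the stated constants.
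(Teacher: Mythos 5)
Your architecture is the same as the paper's: the base case via Lemma~\ref{lem:drift_from_meyn} with $V(\bm{0})=M^2$, the per-visit increment $\bE[\tau_m-\tau_{m-1}\mid\cdot]\le 1+\tfrac{1}{b}\bE[V(\btheta_{\tau_{m-1}+1})1_{\{\btheta_{\tau_{m-1}+1}\notin C\}}\mid\cdot]$ obtained by a strong-Markov restart at time $\tau_{m-1}+1$, the observation that $\{\bmu^T\btheta_{\tau_{m-1}+1}<1\}\subseteq\{\bmu^T\bxi_{\tau_{m-1}+1}<0\}$ because $\bmu^T\btheta_{\tau_{m-1}}\ge 1$ and the update moves $\bmu^T\btheta$ by $\alpha c\,\bmu^T\bxi$ with $c\in[0,1]$, and the truncation $\tau_m\wedge n$ to justify the conditioning are all exactly what the paper does. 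The one place you diverge is the leakage estimate: the paper does \emph{not} compute truncated Gaussian moments. It partitions $\{\bmu^T\btheta_{\tau_{m-1}+1}<1\}$ into integer shells $\{i-1<1-\bmu^T\btheta_{\tau_{m-1}+1}\le i\}$, bounds $V\le (M+i-1)^2$ on shell $i$, contains shell $i$ in $\{\bmu^T\bxi_{\tau_{m-1}+1}<\tfrac{1-i}{\alpha}\}$, and sums the resulting exact Gaussian tail probabilities, comparing the $i\ge 2$ sum to $\int_{\norm{\bmu}/\sigma}^{\infty}t e^{-t^2/2}\,dt$. This needs only the upper tail bound $\Phi^c(t)<\tfrac{1}{t\sqrt{2\pi}}e^{-t^2/2}$ and monotonicity of $te^{-t^2/2}$.

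Your continuous replacement, $\bE[((M-1)+\alpha|g|)^2 1_{\{g<0\}}]$ with $g=\bmu^T\bxi_{\tau_{m-1}+1}$, does not close with the tools you name. Writing $g=\norm{\bmu}^2+\sigma\norm{\bmu}Z$ and $\lambda=\norm{\bmu}/\sigma$, the cross term is $2(M-1)\alpha\,\bE[|g|1_{\{g<0\}}]=2(M-1)\alpha\bigl(\sigma\norm{\bmu}\tfrac{1}{\sqrt{2\pi}}e^{-\lambda^2/2}-\norm{\bmu}^2\Phi^c(\lambda)\bigr)$. If you discard the negative part (or only invoke $\Phi^c(x)\le\tfrac{1}{x\sqrt{2\pi}}e^{-x^2/2}$, which bounds $\Phi^c$ from \emph{above} and therefore cannot help you here), you are left with $2(M-1)\alpha\sigma\norm{\bmu}\tfrac{1}{\sqrt{2\pi}}e^{-\lambda^2/2}$, which must fit inside the budget $\tfrac{\alpha\sigma^3M^2}{\norm{\bmu}}\tfrac{1}{\sqrt{2\pi}}e^{-\lambda^2/2}$ (plus the $O(M)\Phi^c(\lambda)$ slack from $(M-1)^2$ versus $M^2$); this requires roughly $2\norm{\bmu}^2\le\sigma^2 M$, which fails for small $\sigma/\norm{\bmu}$ at moderate $\alpha\norm{\bmu}^2$ (e.g.\ $\alpha\norm{\bmu}^2=2$, $\sigma=0.01\norm{\bmu}$). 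The cancellation you need is $\sigma\norm{\bmu}\tfrac{1}{\sqrt{2\pi}}e^{-\lambda^2/2}-\norm{\bmu}^2\Phi^c(\lambda)\le\tfrac{\sigma^3}{\norm{\bmu}}\tfrac{1}{\sqrt{2\pi}}e^{-\lambda^2/2}$, which is equivalent to the \emph{lower} Mills-ratio bound $\Phi^c(\lambda)\ge(\lambda^{-1}-\lambda^{-3})\tfrac{1}{\sqrt{2\pi}}e^{-\lambda^2/2}$; a similar two-sided estimate is needed for $\bE[g^21_{\{g<0\}}]$. So either import those lower tail bounds explicitly, or adopt the paper's shell decomposition, which sidesteps truncated moments entirely. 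Everything else in your outline is sound.
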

\begin{proof}First, the result for $m=1$ follows immediately by combining Lemmas \ref{lem:driftequation} and \ref{lem:drift_from_meyn} with $\btheta_0 = \bm{0}$. We now assume that $\tau_{m-1}<\infty$ a.s. for some $m\geq 2$. Fix an integer $n \ge 1$. We decompose the space to yield the following bounds

 \begin{equation} \begin{aligned} \label{eq: low_noise_bound}
    \bE\big[(\tau_m-\tau_{m-1})\wedge& n|\mathcal{F}_{\tau_{m-1}+1}\big]=\bE\left[((\tau_m-\tau_{m-1})\wedge n)|\mathcal{F}_{\tau_{m-1}+1}\right]1_{\{\bmu^T\btheta_{\tau_{m-1}+1}\geq 1\}}\\
    & \qquad \qquad \qquad \qquad  \qquad +\bE\left[((\tau_m-\tau_{m-1})\wedge n)|\mathcal{F}_{\tau_{m-1}+1}\right]1_{\{\bmu^T\btheta_{\tau_{m-1}+1}<1\}}\\
    &= 1_{\{\bmu^T\btheta_{\tau_{m-1}+1}\geq 1\}}+ \bE\left[((\tau_m-\tau_{m-1})\wedge n)|\mathcal{F}_{\tau_{m-1}+1}\right]1_{\{\bmu^T\btheta_{\tau_{m-1}+1}<1\}}\\
    &=1_{\{\bmu^T\btheta_{\tau_{m-1}+1}\geq 1\}}+\sum_{i=1}^{\infty}\bE\left[(\tau_m-\tau_{m-1})\wedge n|\mathcal{F}_{\tau_{m-1}+1}\right]1_{\{i-1< 1-\bmu^T\btheta_{\tau_{m-1}+1}\leq i\}}\\
&= 1+\sum_{i=1}^{\infty}\bE\left[\tilde{\tau}_1\wedge n|\btheta_0=\btheta_{\tau_{m-1}+1}\right]1_{\{i-1< 1-\bmu^T\btheta_{\tau_{m-1}+1}\leq i\}}.
\end{aligned} \end{equation}
Here the first equality follows because $((\tau_m-\tau_{m-1})\wedge n) 1_{\{\bmu^T\btheta_{\tau_{m-1}+1} \ge 1\}} = 1_{\{\bmu^T\btheta_{\tau_{m-1}+1}\geq 1\}}$ and the last equality by the strong Markov property.  We consider the logistic and hinge loss case separately to show that the following is true
\begin{equation}\label{eq:ineq:indicators}
\begin{aligned}
1_{\{i-1< 1-\bmu^T\btheta_{\tau_{m-1}+1}\leq i\}} &\leq 1_{\{\bmu^T\bxi_{\tau_{m-1}+1}<\frac{1-i}{\alpha}\}}.
\end{aligned}
\end{equation}
For clarity, in the next few inequalities, we write $1\{.\}$ instead of $1_{\{.\}}$. In case of logistic loss, 
for each $i \ge 1$, we observe the bound
\begin{equation}
\begin{aligned}
 1\{i-1< 1-\bmu^T\btheta_{\tau_{m-1}+1}\leq i\}&\leq 1\{i-1< 1-\bmu^T\btheta_{\tau_{m-1}+1}\}\nonumber\\&=1\left\{i-1< 1-\bmu^T\btheta_{\tau_{m-1}}-\frac{\alpha \bmu^T\bxi_{{\tau_{m-1}}+1}}{1+\exp(\bxi_{{\tau_{m-1}}+1}^T\btheta_{\tau_{m-1}})}\right\}\nonumber\\
    &\leq 1\left\{i-1<-\frac{\alpha \bmu^T\bxi_{\tau_{m-1}+1}}{1+\exp(\bxi_{\tau_{m-1}+1}^T\btheta_{\tau_{m-1}})}\right\} \label{eq:i-1_calculus}\nonumber\\
    & \leq 1\left\{i-1<-\alpha \bmu^T\bxi_{\tau_{m-1}+1}\right\},\nonumber
\end{aligned}
\end{equation}
where the second inequality follows because $\bmu^T\btheta_{\tau_{m-1}} \geq 1$ and the last inequality because $-\alpha \bmu^T \bxi_{\tau_{m-1}+1}$ is positive since $i-1\geq 0$.
 \item In case of hinge loss, for each $i\geq 1$, similar as above, we observe the bound
 \begin{equation}
     \begin{aligned}
     1\left\{i-1<1-\bmu^T\btheta_{\tau_{m-1}+1}\leq i\right\}&\leq 1\left\{i-1<1-\bmu^T\btheta_{\tau_{m-1}+1}\right\}\\&\leq 1\left\{i-1<1-\bmu^T\btheta_{\tau_{m-1}}-\alpha \bmu^T\bxi_{\tau_{m-1}+1}1_{\{\bxi_{\tau_{m-1}+1}^T\btheta_{\tau_{m-1}}\leq 1\}}\right\}\\&\leq 1\left\{i-1<-\alpha \bmu^T\bxi_{\tau_{m-1}+1}1_{\{\bxi_{\tau_{m-1}+1}^T\btheta_{\tau_{m-1}}\leq 1\}} \right\}\\&= 1\left\{i-1<-\alpha\bmu^T\bxi_{\tau_{m-1}+1}\right\}.
     \end{aligned}
 \end{equation}
 Therefore we have shown that \eqref{eq:ineq:indicators} holds. Setting $\btheta_0=\btheta_{\tau_{m-1}+1}$ , by Lemma \ref{lem:drift_from_meyn} for each $i\geq 1$, we deduce 
\begin{equation} \begin{aligned}  \label{eq: low_noise_blah_3}
\bE\left[\tilde{\tau}_1\wedge n|\btheta_0=\btheta_{\tau_{m-1}+1}\right]1_{\{i-1< 1-\bmu^T\btheta_{\tau_{m-1}+1}\leq i\}}&\leq  \frac{(M-\bmu^T\btheta_{\tau_{m-1}+1})^2}{b}1_{\{i-1< 1-\bmu^T\btheta_{\tau_{m-1}+1}\leq i\}}\\&\leq  \frac{(M+i-1)^2}{b}1_{\{ \bmu^T\bxi_{\tau_{m-1}+1}< \frac{1-i}{\alpha}\}}.
\end{aligned} \end{equation}
Finally we observe that 
\begin{equation} \begin{aligned} \label{eq: low_noise_blah_4}
    \bE\left[1_{\{ \bmu^T\bxi_{\tau_{m-1}+1}< \frac{1-i}{\alpha}\}} \right]&=\bE\left[\sum_{k=1}^{\infty}1_{\{ \bmu^T\bxi_{k+1}< \frac{1-i}{\alpha}\}}1_{\{\tau_{m-1}=k\}}\right]\\
&=\sum_{k=1}^{\infty}\bE\left[1_{\{ \bmu^T\bxi_{k+1}< \frac{1-i}{\alpha}\}}  \right]\bE\left[1_{\{\tau_{m-1}=k\}} \right]\\
    &= \Phi\left(\frac{\frac{1-i}{\alpha}-\Vert \bmu\Vert^2}{\sigma\Vert \bmu\Vert } \right)\sum_{k=1}^{\infty}\bE\left[1_{\{\tau_{m-1}=k\}} \right]\\
    &=\Phi\left(\frac{\frac{1-i}{\alpha}-\Vert \bmu\Vert^2}{\sigma\Vert \bmu\Vert } \right).
\end{aligned} \end{equation}
The second equality is by independence and the third equality because $\bmu^T \bxi_{k+1} \sim N(\norm{\bmu}^2, \sigma^2 \norm{\bmu}^2)$.  By combining  \eqref{eq: low_noise_bound}, \eqref{eq: low_noise_blah_3}, and \eqref{eq: low_noise_blah_4}, we obtain the following
\begin{equation} \begin{aligned} \label{eq: low_noise_blah_5}
    \bE\big[(\tau_m-&\tau_{m-1})\wedge n\big]\leq 1+\frac{M^2}{b}\cdot\Phi\left(-\frac{\Vert \bmu \Vert}{\sigma} \right)+\sum_{i=2}^{\infty} \frac{(M+i-1)^2}{b}\cdot\Phi\left(\frac{\frac{1-i}{\alpha}-\Vert \bmu\Vert^2}{\sigma\Vert \bmu\Vert } \right)\\
    &=1+\frac{M^2}{b}\cdot\Phi^c\left(\frac{\Vert \bmu \Vert}{\sigma} \right)+\sum_{i=2}^{\infty} \frac{(M+i-1)^2}{b}\cdot\Phi^{c}\left(\frac{\Vert \bmu\Vert^2+\frac{i-1}{\alpha}}{\sigma\Vert \bmu\Vert } \right)\\
    &\le1+\frac{M^2}{b}\cdot\Phi^c\left(\frac{\Vert \bmu \Vert}{\sigma} \right)+\frac{\alpha\sigma\Vert \bmu \Vert}{b\sqrt{2\pi}}\cdot\sum_{i=2}^{\infty} \frac{(M+i-1)^2}{\alpha\Vert \bmu\Vert^2+i-1}\cdot\exp\left(-\frac{1}{2}\left(\frac{\Vert \bmu\Vert^2+\frac{i-1}{\alpha}}{\sigma\Vert \bmu\Vert}\right)^2 \right),
\end{aligned} \end{equation}
where we used the inequality $\Phi^{c}(t)<\frac{1}{t\sqrt{2\pi}}\exp(-\frac{t^2}{2})$ for all $t>0$. Next, note that $\frac{M+i-1}{\alpha \Vert \bmu \Vert^2+i-1}\leq \frac{M}{\alpha \Vert\bmu\Vert^2}$ holds for all $i\geq 2$.  Using this we obtain the following bound 
\begin{equation} \begin{aligned}\label{eq:exp_low_noise}
       \sum_{i=2}^{\infty} \frac{(M+i-1)^2}{\alpha\Vert \bmu\Vert^2+i-1}\cdot\exp&\left(-\frac{1}{2}\left(\frac{\Vert \bmu\Vert^2+\frac{i-1}{\alpha}}{\sigma\Vert \bmu\Vert}\right)^2 \right)\\
       &\leq \frac{\sigma M^2}{\alpha \Vert \bmu \Vert^3}\cdot\sum_{i=2}^{\infty} \frac{\alpha\Vert \bmu\Vert^2+i-1}{\alpha \sigma \Vert \bmu \Vert}\cdot\exp\left(-\frac{1}{2}\left(\frac{\alpha\Vert \bmu\Vert^2+i-1}{\alpha\sigma\Vert \bmu\Vert}\right)^2 \right)\\&\leq \frac{\sigma M^2}{\alpha \Vert \bmu \Vert^3}\cdot \alpha\sigma \Vert \bmu \Vert\cdot\int_{\frac{\Vert \bmu \Vert}{\sigma}}^{+\infty} t\exp\left(-\frac{t^2}{2}\right)dt \\&=\frac{\sigma^2 M^2}{\Vert \bmu \Vert^2}\cdot\exp\left(-\frac{\Vert \bmu \Vert^2}{2\sigma^2} \right).
\end{aligned} \end{equation}
Here we have used that $t \mapsto t\exp(-\frac{t^2}{2})$ is decreasing over $[1,+\infty)$. Combining \eqref{eq: low_noise_blah_5} and \eqref{eq:exp_low_noise}, we obtain that 
\begin{equation}\begin{aligned}
 \bE\left[(\tau_m-\tau_{m-1})\wedge n\right]&\leq 1+\frac{M^2}{b}\cdot\Phi^c\left(\frac{\Vert \bmu \Vert}{\sigma} \right)+\frac{\alpha\sigma^3 M^2}{ \Vert \bmu \Vert b}\cdot\frac{1}{\sqrt{2\pi}}\exp\left(-\frac{\Vert \bmu \Vert^2}{2\sigma^2} \right).   
\end{aligned}
\end{equation}
Taking the limit as $n \to +\infty$, we observe that
\[\bE[\tau_m]\leq 1+\frac{M^2}{b}\cdot\Phi^c\left(\frac{\Vert \bmu \Vert}{\sigma} \right)+\frac{\alpha\sigma^3 M^2}{ \Vert \bmu \Vert b}\cdot\frac{1}{\sqrt{2\pi}}\exp\left(-\frac{\Vert \bmu \Vert^2}{2\sigma^2} \right)+\bE[\tau_{m-1}].
\]
We then iterate the above inequality yielding
\[ \bE[\tau_m] \le (m-1) \left ( 1+\frac{M^2}{b}\cdot\Phi^c\left(\frac{\Vert \bmu \Vert}{\sigma} \right)+\frac{\alpha\sigma^3 M^2}{ \Vert \bmu \Vert b}\cdot\frac{1}{\sqrt{2\pi}}\exp\left(-\frac{\Vert \bmu \Vert^2}{2\sigma^2} \right)\right ) + \bE[\tau_1]. \]
The result follows by plugging in the bound from Lemma \ref{lem:drift_from_meyn} for the base case $m=1$.
\end{proof}
We are now ready to prove Theorem~\ref{thm:low}.
\begin{proof}[Proof of Theorem
\ref{thm:low}]
In order to simplify the subsequent argument, we define the quantity, 
\[ M' := 1+\frac{M^2}{b}\cdot\Phi^c\left(\frac{\Vert \bmu \Vert}{\sigma} \right)+\frac{\alpha\sigma^3 M^2}{ \Vert \bmu \Vert b}\cdot\frac{1}{\sqrt{2\pi}}\exp\left(-\frac{\Vert \bmu \Vert^2}{2\sigma^2} \right).  \]
It is easy to see that $\mathbb{P}_{\hat{\bxi}\sim N(\bmu,\sigma^2I_d)}\left( \hat{\bxi}^T\btheta\geq 1 \right) \geq \frac{1}{2}$ for any $\btheta\in C$. Therefore $\delta=\frac{1}{2}$ satisfies \eqref{eq:probability_delta}. By Proposition~\ref{prp:Etaum_lownoise} with Lemma \ref{lem:ETleqET_C}, we conclude that 
\begin{align*}
\bE[T]&\leq \bE[T_C]=\sum_{m=1}^{\infty} \bE[T_C1_{\{T_C=\tau_m\}}]\leq \sum_{m=1}^{\infty}\frac{\bE[\tau_m]}{2^{m-1}} \le \sum_{m=1}^\infty \frac{(m-1)M' + \frac{M^2}{b} }{2^{m-1}} = 2M' + \frac{2M^2}{b}.
\end{align*}
\end{proof}

\subsection{High regime, proof of Theorem \ref{thm:high}}\label{sec:proof_high}
In this section, we consider the high variance regime. We consider the target set $C$ and the function $V$ defined in \eqref{eq:targetset_highnoise} and \eqref{eq:driftfunction_highnoise}, respectively, \textit{i.e.}
\begin{equation}\label{eq:CV_high_recall}
    C:=\left\{\btheta: \vert \rho -\rho^*\vert<\tfrac{1}{2}\rho^* \text{ and } \sigma \Vert \tilde{\btheta}\Vert \leq c'\right\} \quad \text{and}\quad V(\btheta):=\frac{1}{2\alpha}\Vert \btheta-\btheta^*\Vert^2,
\end{equation}
where the minimizer $\btheta^*=\rho^*\bmu$ is defined in Lemma \ref{lem:minimizers} and the constant $c'$ is to be determined. We first aim to show that $V$ is a drift function with respect to the set $C$ under the high variance regime assumption, meaning $\sigma \geq c\Vert \bmu \Vert$. We next state a standard SGD convergence result applied to the logistic and hinge loss functions. 
\begin{lemma}\label{lem:technical_convex_bound}
Consider the optimization problem \eqref{optimization_problem} where $\ell:\R\times \R \rightarrow \R$ is either the logistic or hinge loss function. Denote the vector $\btheta^*$ as the unique minimizer of $f$ in \eqref{optimization_problem}. Let $\btheta_0\in \R^d$. The sequence $\{\bm{\theta}_k \}_{k = 0}^\infty$ generated by SGD satisfies the following for all $k\geq 1$,
    \begin{equation} \label{eq:high_convex_tech_lemma}
       f(\bm{\theta}_{k-1})-f(\bm{\theta}^*)\leq \frac{1}{2\alpha}\left(\Vert  \bm{\theta}_{k-1}-\bm{\theta} ^*\Vert^2-\bE\left[\Vert \bm{\theta}_{k}-\bm{\theta}^* \Vert^2\, |\mathcal{F}_{k-1}\right] \right)+\frac{\alpha}{2}\left(\Vert \bmu \Vert^2+d\sigma^2\right).
    \end{equation}

\end{lemma}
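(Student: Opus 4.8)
The plan is to follow the classical one-step descent analysis of SGD for convex objectives, specialized to our two loss functions through a uniform bound on the conditional second moment of the stochastic gradient. Write the stochastic gradient at step $k$ as $\g_k := \nabla_{\btheta}\ell(\bxi_k^T\btheta_{k-1},1)$, so that the update reads $\btheta_k = \btheta_{k-1} - \alpha\g_k$. Because the expectation and gradient can be interchanged (as observed just after \eqref{optimization_problem}), the stochastic gradient is conditionally unbiased, $\bE[\g_k \mid \mathcal{F}_{k-1}] = \nabla f(\btheta_{k-1})$, and $\btheta^*$ is the unique minimizer supplied by Lemma~\ref{lem:minimizers}.

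First I would expand the squared distance to the optimum,
\[
\norm{\btheta_k - \btheta^*}^2 = \norm{\btheta_{k-1}-\btheta^*}^2 - 2\alpha\,\g_k^T(\btheta_{k-1}-\btheta^*) + \alpha^2\norm{\g_k}^2,
\]
and take conditional expectation given $\mathcal{F}_{k-1}$, using that $\btheta_{k-1}$ and $\btheta^*$ are $\mathcal{F}_{k-1}$-measurable. Applying unbiasedness to the cross term and the convexity of $f$ (established in the proof of Lemma~\ref{lem:minimizers}) in the form $\nabla f(\btheta_{k-1})^T(\btheta_{k-1}-\btheta^*) \geq f(\btheta_{k-1}) - f(\btheta^*)$ gives
\[
\bE\!\left[\norm{\btheta_k-\btheta^*}^2 \mid \mathcal{F}_{k-1}\right] \leq \norm{\btheta_{k-1}-\btheta^*}^2 - 2\alpha\bigl(f(\btheta_{k-1}) - f(\btheta^*)\bigr) + \alpha^2\,\bE\!\left[\norm{\g_k}^2 \mid \mathcal{F}_{k-1}\right].
\]
Rearranging and dividing by $2\alpha$ already yields the asserted inequality, except with $\tfrac{\alpha}{2}\bE[\norm{\g_k}^2\mid\mathcal{F}_{k-1}]$ in place of the final term. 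Hence it remains only to bound the conditional second moment of the gradient.

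The one substantive step is to show $\bE[\norm{\g_k}^2 \mid \mathcal{F}_{k-1}] \leq \norm{\bmu}^2 + d\sigma^2$, and this is where the two loss functions enter. For the logistic loss, $\g_k = -\tfrac{1}{1+\exp(\bxi_k^T\btheta_{k-1})}\bxi_k$, whose scalar prefactor lies in $(0,1)$; for the hinge loss, $\g_k = -\bxi_k 1_{\{\bxi_k^T\btheta_{k-1}\leq 1\}}$, whose prefactor is either $0$ or $1$. In both cases $\norm{\g_k}\leq \norm{\bxi_k}$ pointwise, so $\bE[\norm{\g_k}^2\mid\mathcal{F}_{k-1}] \leq \bE[\norm{\bxi_k}^2] = \norm{\bmu}^2 + d\sigma^2$ by the first identity in \eqref{fact:norm_Gaussians}. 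Substituting this bound completes the argument. I do not expect a genuine obstacle here: the only point requiring care is the uniform boundedness of the gradient prefactor, which is immediate once the explicit derivatives of the logistic and hinge losses are written out.
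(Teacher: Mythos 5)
Your proposal is correct and follows essentially the same route as the paper's proof: expand $\norm{\btheta_k-\btheta^*}^2$, take conditional expectations, use the unbiasedness of the stochastic gradient together with convexity of $f$, and bound the conditional second moment by $\bE[\norm{\bxi_k}^2]=\norm{\bmu}^2+d\sigma^2$ since the gradient prefactor is at most $1$ in absolute value for both losses. No substantive differences.
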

\begin{proof}
 Define the quantity 
  \begin{equation*}
      \bm{g_k}:=\frac{1}{\alpha}\left( \bm{\theta}_{k-1}-\bm{\theta}_{k}\right) = \nabla_{\btheta}\ell\left(\bxi_k^T\btheta_{k-1},1 \right). \end{equation*}
Here, it is easy to check that the derivative with respect to $\btheta$ and the expectation over $\bxi_k$ are interchangeable, thus yielding      \[
      \bE_{\bxi_{k}}\left[\bm{g}_k|\mathcal{F}_{k-1} \right]=\nabla_{\btheta} f(\btheta_{k-1}).
      \]
By convexity of the function $f$, we have the following
\begin{align*}
\norm{\bm{\theta}_{k}-\bm{\theta}^*}^2 &= \norm{\bm{\theta}_{k-1}-\bm{\theta}^*}^2 -2 \alpha \bm{g}_k^T (\bm{\theta}_{k-1}-\bm{\theta}^*) + \alpha^2\norm{\bm{g}_k}^2\\
&= \norm{\bm{\theta}_{k-1}-\bm{\theta}^*}^2 - 2\alpha(\bm{g}_k-\bE_{\bxi_{k}}\left[\bm{g}_k|\mathcal{F}_{k-1} \right])^T(\bm{\theta}_{k-1}-\bm{\theta}^*) - 2 \alpha \bE_{\bxi_{k}}\left[\bm{g}_k|\mathcal{F}_{k-1} \right]^T (\bm{\theta}_{k-1}-\bm{\theta}^*)+\alpha^2 \norm{\bm{g}_k}^2\\
&\le \norm{\bm{\theta}_{k-1}-\bm{\theta}^*}^2- 2\alpha(\bm{g}_k-\bE_{\bxi_{k}}\left[\bm{g}_k|\mathcal{F}_{k-1} \right])^T(\bm{\theta}_{k-1}-\bm{\theta}^*)-2\alpha (f(\bm{\theta}_{k-1})-f(\bm{\theta}^*)) + \alpha^2 \norm{\bm{g}_k}^2.
  \end{align*}
  By taking conditional expectations with respect to $\mathcal{F}_{k-1}$ and rearranging the above inequality, we obtain that 
    \begin{equation}\label{eq:label_ineq_blah1}
       f(\bm{\theta}_{k-1})-f(\bm{\theta}^*)\leq \frac{1}{2\alpha}\left(\Vert  \bm{\theta}_{k-1}-\bm{\theta} ^*\Vert^2-\bE\left[\Vert \bm{\theta}_{k}-\bm{\theta}^* \Vert^2\, |\mathcal{F}_{k-1}\right] \right)+\frac{\alpha}{2}\bE_{\bxi_k}\left[\Vert \nabla_{\btheta} \ell(\bxi_k^T\btheta_{k-1},1)\Vert^2 \right].
    \end{equation}
    We next observe the following bound 
    \begin{equation}\label{eq:label_ineq_blah2}
        \bE_{\bxi_k}[\Vert\nabla_{\btheta}\ell\left( \bxi_k^T\btheta_{k-1},1\right)\Vert^2 \, | \, \mathcal{F}_{k-1}]\leq \bE_{\bxi_k}[\Vert \bxi_k \Vert^2|\mathcal{F}_{k-1}]=\Vert \bmu \Vert^2+d\sigma^2.
\end{equation}
Combining \eqref{eq:label_ineq_blah1} and \eqref{eq:label_ineq_blah2}, the result follows. 
\end{proof}
By Lemma \ref{lem:technical_convex_bound} for each $k\geq 1$, we deduce 
\begin{equation}\label{eq:driftequation_high}
\bE[V(\btheta_k)|\mathcal{F}_{k-1}]-V(\btheta_{k-1})\leq -\left(f(\btheta_{k-1})-f(\btheta^*)\right)+\frac{\alpha}{2}(\Vert\bmu\Vert^2+d\sigma^2).
\end{equation}
Therefore, in order to show that the pair $(C,V)$ in \eqref{eq:CV_high_recall} satisfies the drift equation \eqref{eq:driftequation}, it suffices to lower bound the quantity $f(\btheta_{k-1})-f(\btheta^*)$ whenever $\btheta_{k-1}\not\in C$. To do so, we orthogonally decompose $\btheta_{k-1}=\rho_{k-1}\bmu+\tilde{\btheta}_{k-1}$, \textit{i.e.} $\bmu^T\tilde{\btheta}_{k-1}=0$ and $\rho_{k-1}\in \R$ and write 
\begin{equation}\label{eq:quantity_split}
\begin{aligned}
f(\btheta_{k-1})-f(\btheta^*)= & \underbrace{f(\btheta_{k-1})-f(\rho_{k-1}\bmu)}_{(a)}+\underbrace{f(\rho_{k-1}\bmu)-f(\btheta^*)}_{(b)}.
\end{aligned}
\end{equation}
The assumption $\btheta_{k-1}\not \in C$ yields that either $\sigma \Vert \tilde{\btheta}_{k-1}\Vert \geq c'$ or $\vert \rho_{k-1}-\rho^*\vert\geq \tfrac{1}{2}\rho^*$. In Lemma \ref{lem:lower_bound_1} (resp. \ref{lem:lower_bound_2}), we show that (a) (resp. (b)) in \eqref{eq:quantity_split} are both non-negative and they are lower bounded by some positive constant provided that $\sigma \Vert \tilde{\btheta}_{k-1}\Vert \geq c'$ and $\vert \rho_{k-1}-\rho^*\vert\leq \tfrac{1}{2}\rho^*$ (resp. $\vert \rho_{k-1}-\rho^*\vert\geq \tfrac{1}{2}\rho^*$).
\begin{lemma}(Lower bound for (a) in \eqref{eq:quantity_split})\label{lem:lower_bound_1}
Fix $\btheta\in \R^d$ and orthogonally decompose $\btheta=\rho\bmu+\tilde{\btheta}$ where $\bmu^T\tilde{\btheta}=0$ and $\rho \in \R$.  Then the following are true
\begin{enumerate}
    \item  $f(\btheta)-f(\rho\bmu)\geq 0$.
    \item  $f(\btheta)-f(\rho\bmu)\geq 1$ provided that $\vert \rho-\rho^*\vert\leq \tfrac{1}{2}\rho^*$, $\sigma \Vert \tilde{\btheta}\Vert \geq c'$ and $\sigma \geq c\Vert \bmu \Vert$ where $c$ is defined in \eqref{eq:para_log} and \eqref{eq:para_hinge}. Here $\rho^*$ is defined in Lemma \ref{lem:minimizers} and the constant $c'$ is defined by $436$ and $8+10\rho^*\sigma^2$ for the logistic and hinge loss respectively. 
\end{enumerate}
\end{lemma}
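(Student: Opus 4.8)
The plan is to reduce both claims to a one-dimensional statement comparing the convex loss evaluated at two Gaussians that share a mean but differ in variance. Writing $\bxi=\bmu+\sigma\bm{\psi}$ with $\bm{\psi}\sim N(\bm{0},I_d)$, I decompose the argument of the loss as $\bxi^T\btheta=\rho\,\bxi^T\bmu+\bxi^T\tilde{\btheta}$. Since $\bmu^T\tilde{\btheta}=0$, fact \eqref{eqn:fact_independence} makes $\bxi^T\bmu$ and $\bxi^T\tilde{\btheta}$ independent, and by \eqref{eq:fact_affine} the second term is $\bxi^T\tilde{\btheta}\sim N(0,\sigma^2\norm{\tilde{\btheta}}^2)$, i.e. mean zero with standard deviation $\nu:=\sigma\norm{\tilde{\btheta}}$. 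Abbreviating $s:=\rho\,\bxi^T\bmu$ and $v:=\bxi^T\tilde{\btheta}$, I would write
\[
f(\btheta)-f(\rho\bmu)=\bE_{s}\big[\,\bE_{v}[\ell(s+v,1)]-\ell(s,1)\,\big]=\bE_s[g(s)],\qquad g(s):=\bE_v[\ell(s+v,1)]-\ell(s,1),
\]
so the lemma becomes a statement about the scalar Jensen gap $g$ of $\ell(\cdot,1)$ against the mean-zero Gaussian $v$. Part 1 is then immediate: conditioning on $s$ and applying Jensen's inequality to the convex map $x\mapsto\ell(x,1)$ gives $\bE_v[\ell(s+v,1)]\ge\ell(s+\bE_v[v],1)=\ell(s,1)$, whence $g(s)\ge 0$ pointwise and $f(\btheta)-f(\rho\bmu)=\bE_s[g(s)]\ge 0$.

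For part 2 I need a quantitative lower bound on $\bE_s[g(s)]$. The engine is that for both losses $g(s)$ equals, up to an additive $O(1)$ constant, the Jensen gap of a rectified-linear function against $v$, which is explicitly computable and grows linearly in $\nu$. For the hinge loss $\ell(x,1)=(1-x)_+$ exactly, so with $w:=1-s-v\sim N(1-s,\nu^2)$ one gets $g(s)=\bE[w_+]-(\bE w)_+=\nu\,\Psi\!\left(\tfrac{1-s}{\nu}\right)$, where $\Psi(t):=\tfrac{1}{\sqrt{2\pi}}\exp(-t^2/2)-|t|\,\Phi^c(|t|)$ is the (even, strictly positive, decreasing in $|t|$) standard Gaussian ReLU gap with $\Psi(1)\approx 0.083$. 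For the logistic loss I would sandwich $\ell(\cdot,1)$ between rectified-linear functions via $(-x)_+\le\ell(x,1)\le\log 2+(-x)_+$ (both immediate from $\ell(x,1)=\log(1+e^{-x})$), which yields $g(s)\ge\nu\,\Psi\!\left(\tfrac{s}{\nu}\right)-\log 2$. In either case, on the event $\{|1-s|\le\nu\}$ (hinge) or $\{|s|\le\nu\}$ (logistic) the gap is at least $\nu\,\Psi(1)$ minus the $O(1)$ error, which is large precisely because $\nu\ge c'$.

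It remains to show this bound survives averaging over $s$, i.e. that the favorable event carries most of the mass of $s$, and this is where the three hypotheses enter. By \eqref{eq:fact_affine}, $s\sim N(\rho\norm{\bmu}^2,\rho^2\sigma^2\norm{\bmu}^2)$. The bound $|\rho-\rho^*|\le\tfrac12\rho^*$ confines $\rho$ to $[\tfrac12\rho^*,\tfrac32\rho^*]$, and together with $\sigma\ge c\norm{\bmu}$ (so $\norm{\bmu}/\sigma\le 1/c$) it forces both the mean $\rho\norm{\bmu}^2$ and the standard deviation $\rho\sigma\norm{\bmu}$ of $s$ to be of order $\rho^*\sigma^2=r$. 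Since $c'$ is chosen of this same order (namely $436$ for logistic, where $r=2$, and $8+10\rho^*\sigma^2$ for hinge), the half-width $\nu\ge c'$ exceeds the scale of $s$ by a controlled factor, so a Gaussian tail estimate gives $\bP(|s|\lesssim\nu)\ge\tfrac12$ (in fact much nearer $1$). Combining $g\ge 0$ everywhere with the per-slice bound on the favorable event then gives $\bE_s[g(s)]\ge\big(\nu\Psi(1)-O(1)\big)\cdot\bP(\text{favorable})\ge 1$ once the constants are inserted.

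The main obstacle is that the logistic loss is \emph{flat} in its tails and so carries no curvature there; one therefore cannot simply invoke strong convexity to manufacture the gap, and must instead exploit that the injected noise has huge scale $\nu\ge c'$ relative to the localized variable $s$. The delicate part is the calibration of $c'$ together with all three hypotheses on $\rho$, $\sigma$ and $\norm{\bmu}$, so that the product of the per-slice gap and the probability of the favorable event clears the threshold $1$. The hinge case is cleaner, its exact ReLU form giving $g(s)=\nu\Psi((1-s)/\nu)$ outright; the logistic case requires the two-sided rectified-linear sandwich and the bookkeeping needed to absorb the $\log 2$ error.
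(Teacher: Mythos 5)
Your proposal is correct, and it reaches the lemma by a genuinely different route than the paper. The paper's proof symmetrizes over the sign of $\tilde{\btheta}^T\bxi$: for the logistic loss it multiplies the two symmetrized expressions to get the identity $f(\btheta)-f(\rho\bmu)=\tfrac{1}{2}\bE\log\bigl(1+\sinh^2(\tilde{\btheta}^T\bxi/2)/\cosh^2(\rho\bmu^T\bxi/2)\bigr)$, which is manifestly nonnegative, and then lower-bounds it on the event $|\tilde{\btheta}^T\bxi|\ge 4\log(\sqrt{2}+1)$ using $|\sinh(x)|\ge e^{|x|/2}$ and $\log\cosh(x)\le|x|$; for the hinge loss it runs a case analysis on the symmetrized increment $\kappa(\xi_1,\xi_2)$ to reduce the gap to $\tfrac12\bE|\xi_1|-\bE|1-\rho\xi_2|$. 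You instead condition on $s=\rho\bmu^T\bxi$ and identify the gap as the Jensen gap of the convex loss against the independent mean-zero Gaussian $v=\tilde{\btheta}^T\bxi$, which makes part 1 immediate for any convex loss and, via the exact Gaussian--ReLU smoothing formula (exact for hinge, a two-sided rectified-linear sandwich for logistic), exhibits the gap as $\nu\Psi(\cdot)$ with $\nu=\sigma\norm{\tilde{\btheta}}$, explicitly linear in $\nu$; the hypotheses on $\rho$ and $\sigma/\norm{\bmu}$ then only need to confine $s$ to a window of width $O(\nu)$, which they do with huge margin. This is more modular and arguably more transparent than the paper's algebra. One calibration point you should not leave implicit: in the hinge case your bound is $\nu\Psi(1)\cdot\bP(|1-s|\le\nu)\approx 0.083\,(8+10\rho^*\sigma^2)$, which clears $1$ only if $\rho^*\sigma^2\gtrsim 0.41$; this does hold in the high-variance regime (one can extract $\rho^*\sigma^2\ge 1.5$ from \eqref{eq:w_hinge} when $\sigma\ge 1.25\norm{\bmu}$), but you must either prove that lower bound on $\rho^*\sigma^2$ or shrink the favorable window to, say, $\{|1-s|\le\nu/2\}$ and use $\Psi(1/2)\approx 0.198$, after which $8\cdot 0.198>1$ with no lower bound on $\rho^*\sigma^2$ needed. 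With that detail pinned down, your argument closes, and in the logistic case it does so with far more room ($436\,\Psi(1)-\log 2\approx 35$) than the threshold of $1$ requires.
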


\begin{proof} We consider the logistic and hinge loss separately. 
\item
\textbf{Logistic loss.} The two normal random variables, $\bm{\tilde{\theta}}^T\bm{\xi} \sim N(0,\sigma^2\Vert \bm{\tilde{\theta}} \Vert^2)$ and $\bmu^T\bxi \sim N(\Vert \bmu \Vert^2,\sigma^2\Vert \bmu \Vert^2)$, are independent by \eqref{eqn:fact_independence}. Since we have $\bE_{\bxi}[\log(\exp(-\bm{\tilde{\theta}}^T\bxi))]=\bE_{\bxi}[\log(\exp(\bm{\tilde{\theta}}^T\bxi))]=0$, it holds
\begin{align*}
    f(\btheta)=\bE_{\bxi} \left[ \log\left (1+\exp(-\btheta^T\bxi)\right)\right] &= \bE_{\bxi} \left[\log \left(1 + \exp(-\tilde{\btheta}^T\bxi) \exp(-\rho \bmu^T\bxi) \right ) \right ]\\
    &=\bE_{\bxi} \left[ \log\left (\exp(\bm{\tilde{\theta}}^T\bxi)+\exp(-\rho \bmu^T\bxi)\right)\right]\\
    &=\bE_{\bxi} \left[ \log\left (\exp(-\bm{\tilde{\theta}}^T\bxi)+\exp(-\rho \bmu^T\bxi)\right)\right],
\end{align*}
where the last equality is true because $\bm{\tilde{\theta}}^T\bxi \sim -\bm{\tilde{\theta}}^T\bxi$. Therefore we obtain
\begin{align*}
  \bE_{\bxi} &\left[ \log\left (1+\exp(-\btheta^T\bxi)\right)\right]\\
  & \qquad \qquad= \frac{1}{2} \bE_{\bxi} \left[ \log\left (\exp(\bm{\tilde{\theta}}^T\bxi)+\exp(-\rho \bmu^T\bxi)\right)\right] + \frac{1}{2}\bE_{\bxi} \left[ \log\left (\exp(-\bm{\tilde{\theta}}^T\bxi)+\exp(-\rho \bmu^T\bxi)\right)\right] \\
  & \qquad \qquad= \frac{1}{2} \bE_{\bxi} \left[ \log\left ( (\exp(\bm{\tilde{\theta}}^T\bxi)+\exp(-\rho \bmu^T\bxi))(\exp(-\bm{\tilde{\theta}}^T\bxi)+\exp(-\rho \bmu^T\bxi)) \right)\right]\\ 
  &\qquad \qquad=\frac{1}{2}\bE_{\bxi} \left[\log\left(1+\exp(-\bm{\tilde{\theta}}^T\bxi-\rho \bmu^T\bxi)+\exp(\bm{\tilde{\theta}}^T\bxi-\rho \bmu^T\bxi)+\exp(-2\rho \bmu^T\bxi)\right) \right].
  \end{align*}
By the equality $\exp(\bm{\tilde{\theta}}^T\bxi)+\exp(-\bm{\tilde{\theta}}^T\bxi)= 2+4\sinh^2(\tfrac{\tilde{\btheta}^T\bxi}{2})$, we have
\begin{align*}
   \bE_{\bxi} &\left[ \log\left (1+\exp(-\btheta^T\bxi)\right)\right]\\
    & \qquad \qquad \qquad =\frac{1}{2}\bE_{\bxi} \left[\log\left(1+2\exp(-\rho \bmu^T\bxi)+\exp(-2\rho \bmu^T\bxi)+4\sinh^2(\tfrac{\tilde{\btheta}^T\bxi}{2})\exp(-\rho \bmu^T\bxi)\right) \right].
\end{align*}
Therefore, we have
\begin{equation} \begin{aligned} \label{eqn: high_noise_blah_20}
     2\bE_{\bxi}\left[\log\left( \frac{1+\exp(-\btheta^T\bxi)}{1+\exp(-\rho \bmu^T\bxi)}\right)\right]&=2\bE_{\bxi}\left[\log(1+\exp(-\btheta^T\bxi))\right]-\bE_{\bxi}\left[\log\left(1+\exp(-\rho \bmu^T\bxi)\right)^2\right]\\
     \qquad \qquad &= \bE_{\bxi}\left[\log \left ( 1 +  \frac{4\sinh^2(\frac{\tilde{\btheta}^T\bxi}{2})\exp(-\rho \bmu^T\bxi)}{(1+ \exp(-\rho \bmu^T\bxi))^2} \right ) \right] \ge 0.
     \end{aligned} \end{equation}
     Thereby, we showed that $f(\btheta)-f(\rho \bmu)\geq 0$. Now we establish the positive lower bound. First, we note the following $1+ \exp(-\rho \bmu^T\bxi) = 2 \exp( -\tfrac{\rho \bmu^T\bxi}{2}) \cosh(\tfrac{\rho \bmu^T\bxi}{2})$. Fix a  constant $r > 0$ and consider the set $\{\bxi: |\btheta^T\bxi| > r\}$. Applying the inequality $x^2+y^2\geq 2\vert xy \vert$ and \eqref{eqn: high_noise_blah_20}, we obtain that 
     \begin{align}
   2\bE_{\bxi}\left[\log\left( \frac{1+\exp(-\btheta^T\bxi)}{1+\exp(-\rho \bmu^T\bxi)}\right)\right]  & = \bE_{\bxi}\left[\log\left(1 + \frac{4\sinh^2(\frac{\tilde{\btheta}^T\bxi}{2})\exp(-\rho \bmu^T\bxi)}{(1+ \exp(-\rho \bmu^T\bxi))^2}\right) \right] \nonumber \\ 
     &= \bE_{\bxi}\left[\log\left(1+\frac{\sinh^2(\frac{\tilde{\btheta}^T\bxi}{2})}{\cosh^2(\frac{\rho}{2}\bmu^T\bxi)} \right) \right] \nonumber \\ 
     &\geq \bE_{\bxi}\left[\log\left(1+\frac{\sinh^2(\frac{\tilde{\btheta}^T\bxi}{2})}{\cosh^2(\frac{\rho}{2}\bmu^T\bxi)} \right)\cdot1_{\{\bxi:\vert \tilde{\btheta}^T\bxi\vert \geq r\}}\right] \label{eq:high_noise_blah_21}
     \\
     &\geq \bE_{\bxi}\left[\left(\log2+\log\left( \frac{\vert\sinh(\frac{\tilde{\btheta}^T\bxi}{2})\vert}{\cosh(\frac{\rho}{2}\bmu^T\bxi)}\right) \right)\cdot1_{\{\bxi:\vert \tilde{\btheta}^T\bxi\vert \geq r\}}\right] \nonumber.
    \end{align}
Here  \eqref{eq:high_noise_blah_21} follows from $\log\left(1+\frac{\sinh^2(\frac{\tilde{\btheta}^T\bxi}{2})}{\cosh^2(\frac{\rho}{2}\bmu^T\bxi)} \right)$ is always positive. From \eqref{eq:fact_affine}, we have $\bmu^T\bxi \sim N(\Vert \bmu \Vert^2,\sigma^2\Vert \bmu \Vert^2)$ and $\tilde{\btheta}^T\bxi \sim N(0,\sigma^2\Vert \tilde{\btheta} \Vert^2)$, so $\tilde{\btheta}^T\bxi = \sigma \| \tilde{\btheta} \| \psi$ where $\psi \sim N(0,1)$. Moreover, a simple computation shows that $-\log\left(\cosh(\frac{\rho}{2}\bmu^T\bxi)\right)1_{\{\vert \tilde{\btheta}^T\bxi\vert \geq r\}} \geq -\log\left(\cosh(\frac{\rho}{2}\bmu^T\bxi)\right) $ since $\cosh(\frac{\rho}{2}\bmu^T\bxi)\geq 1$ always holds. Using the inequality $\log\cosh(x)\leq \vert x\vert$ for $x$, the following bound holds
     \begin{equation} \begin{aligned} \label{eq: high_noise_22}
       &\bE_{\bxi}\left[\log\left( \frac{1+\exp(-\btheta^T\bxi)}{1+\exp(-\rho \bmu^T\bxi)}\right)\right]\\
       & \quad \ge \tfrac{1}{2} \log(2) \cdot \bE_{\psi}\left[1_{\{\vert \psi\vert \geq \frac{r}{\sigma \Vert\tilde{\btheta}\Vert}\}}\right] + \tfrac{1}{2} \bE_{\psi}\left[\log\left\vert\sinh(\tfrac{\sigma\Vert \tilde{\btheta}\Vert\psi}{2})\right\vert1_{\{\vert \psi\vert \geq \frac{r}{\sigma \Vert\tilde{\btheta}\Vert}\}}\right] - \tfrac{1}{2}\bE_{\bxi} \left [ \log( \cosh(\tfrac{\rho}{2} \bmu^T\bxi)) \right ]\\
       & \quad \ge \tfrac{1}{2} \log(2) \cdot \bE_{\psi}\left[1_{\{\vert \psi\vert \geq \frac{r}{\sigma \Vert\tilde{\btheta}\Vert}\}}\right] + \tfrac{1}{2} \bE_{\psi}\left[\log\left\vert\sinh(\tfrac{\sigma\Vert \tilde{\btheta}\Vert\psi}{2})\right\vert1_{\{\vert \psi\vert \geq \frac{r}{\sigma \Vert\tilde{\btheta}\Vert}\}}\right] - \tfrac{1}{2} \bE_{\bxi} \left [ | \tfrac{\rho}{2} \bmu^T\bxi | \right ]\\
       &\quad\geq \tfrac{1}{2} \log(2) \cdot \bE_{\psi}\left[1_{\{\vert \psi\vert \geq \frac{r}{\sigma \Vert\tilde{\btheta}\Vert}\}}\right] + \tfrac{1}{2} \bE_{\psi}\left[\log\left\vert\sinh(\tfrac{\sigma\Vert \tilde{\btheta}\Vert\psi}{2})\right\vert1_{\{\vert \psi\vert \geq \frac{r}{\sigma \Vert\tilde{\btheta}\Vert}\}}\right]-\frac{3}{4}\left(\frac{\Vert \bmu\Vert^2}{\sigma^2}+ \sqrt{\frac{2}{\pi}}\cdot\frac{\Vert \bmu \Vert}{\sigma} \right),
    \end{aligned} \end{equation}
where the last inequality uses \eqref{fact:norm_Gaussians} and $\rho\leq \frac{3}{\sigma^2}$. Using the inequality $\vert \sinh(x) \vert \geq \exp(\frac{\vert x\vert}{2})$ for all $\vert x \vert \geq 2\log(\sqrt{2}+1)$ and letting $r=4\log(\sqrt{2}+1)$, we obtain 
\begin{align}
   \tfrac{1}{2} \log(2) \cdot \bE_{\psi} \big [ 1_{\{|\psi| \ge \frac{4 \log( \sqrt{2}+ 1)\}}{\sigma \norm{\tilde{\btheta}} }} \big ] &+ \tfrac{1}{2}\bE_\psi\left[\log\left\vert\sinh(\tfrac{\sigma\Vert \tilde{\btheta}\Vert\psi}{2})\right\vert1_{\{|\psi| \geq \frac{4\log(\sqrt{2}+1)}{\sigma \Vert \tilde{\btheta}\Vert}\}}\right]\nonumber\\
   &\geq \tfrac{1}{2} \log(2) \cdot \bE_{\psi} \big [ 1_{\{|\psi| \ge \frac{4 \log( \sqrt{2}+ 1)}{\sigma \norm{\tilde{\btheta}} } \}} \big ] +  \tfrac{1}{2}\bE_\psi\left[\left\vert\tfrac{\sigma \Vert \tilde{\btheta}\Vert \psi}{4} \right\vert1_{\{|\psi| \geq \frac{4\log(\sqrt{2}+1)}{\sigma \Vert \tilde{\btheta}\Vert}\}}\right]\nonumber\\
    &\geq \tfrac{1}{2} \log(2) \cdot \bE_{\psi}[1_{\{\vert\psi\vert \ge 1\}}] + \tfrac{1}{2} \bE_\psi\left[\left\vert\tfrac{\sigma \Vert \tilde{\btheta}\Vert \psi}{4} \right\vert1_{\vert\psi\vert \geq 1\}}\right] \label{eq:high_noise_blah_111} \\\nonumber
    &\geq \left ( \tfrac{1}{2} \log(2) + \frac{\sigma \Vert \tilde{\btheta}\Vert}{8} \right )\cdot\Phi^c(1).
\end{align}
Here \eqref{eq:high_noise_blah_111} follows from the assumption that $\sigma \Vert \tilde{\btheta}\Vert\geq 436
$. Combining  \eqref{eq: high_noise_22}, \eqref{eq:high_noise_blah_111} and the bounds $\sigma \geq 0.33\Vert \bmu \Vert$ and $\sigma\Vert \tilde{\btheta}\Vert \geq 436$ the result follows. \item
\textbf{Hinge loss.} We begin by denoting
$\xi_1:=\bxi^T\tilde{\btheta}$ and $\xi_2:=\bxi^T\bmu$. Notice that $\xi_1$ and $\xi_2$ are independent random variables. Recall that $\ell(t):=\ell(t,1)=\max(0,1-t)$. We have that 
\begin{align*}
    f(\btheta)-f(\rho \bmu)&=\bE_{\bxi}\left[\ell(\bxi^T\btheta)-\ell(\rho\bxi^T\bmu) \right]\\&=\bE_{\xi_1,\xi_2}\left[\ell(\xi_1+\rho\xi_2)-\ell(\rho\xi_2) \right]\\&=\bE_{\xi_1,\xi_2}\left[\ell(-\xi_1+\rho\xi_2)-\ell(\rho\xi_2) \right].
\end{align*}
The second equality follows since $\xi_1 \sim -\xi_1$. We define the function 
\[
\kappa(\xi_1,\xi_2):=\ell(\xi_1+\rho\xi_2)+\ell(-\xi_1+\rho\xi_2)-2\ell(\rho\xi_2).
\] 
We therefore obtain that 
\[
2\left(f(\btheta)-f(\rho\bmu)\right)=\bE_{\xi_1,\xi_2}\left[\kappa(\xi_1,\xi_2)\right].
\]
Next we claim that
\begin{equation}\label{eq:hinge_lemma_claim1}
    \kappa(\xi_1,\xi_2)=0 \text{ whenever } \vert \xi_1\vert\leq \vert 1-\rho\xi_2\vert.
\end{equation}
To see this, suppose that $\vert \xi_1\vert\leq \vert 1-\rho\xi_2\vert$ holds. We consider two cases.  First, assume that $0\leq 1-\rho\xi_2$ which yields that $\rho\xi_2-\xi_1\leq 1$ and $\rho\xi_2+\xi_1\leq 1$. We therefore have $\kappa(\xi_1,\xi_2)=1-\xi_1-\rho\xi_2+1+\xi_1-\rho\xi_2-2(1-\rho\xi_2)=0$. Second, assume that $1-\rho\xi_2\leq 0$. It thus holds that $1\leq \rho\xi_2-\xi_1$ and $1\leq \rho\xi_2+\xi_1$. Now it immediately follows that $\kappa(\xi_1,\xi_2)=0$ and equation \eqref{eq:hinge_lemma_claim1} is established. We claim the following 
\begin{equation}\label{eq:hinge_lemma_claim2}
    \kappa(\xi_1,\xi_2)=\vert \xi_1\vert-\vert 1-\rho\xi_2\vert \text{ whenever } \vert \xi_1\vert\geq \vert 1-\rho\xi_2\vert.
\end{equation}
To this end, we again consider two cases. First, assume that $\xi_1\leq -\vert 1-\rho\xi_2\vert$. This yields that $1\leq -\xi_1+\rho\xi_2$ and $\xi_1+\rho\xi_2\leq 1$, so it holds that $ \kappa(\xi_1,\xi_2)= 1-\xi_1-\rho\xi_2-2\ell(\rho\xi_2)$. The claim \eqref{eq:hinge_lemma_claim2} follows from the following simple identity 
\begin{equation}\label{eq:hinge_identity}
   2\ell(t)= 1-t+\vert 1 - t \vert, \quad \forall t \in \R.
\end{equation}
Second, assume that $\xi_1\geq \vert 1-\rho\xi_2\vert$. It then holds that $\xi_1+\rho\xi_2\geq 1$ and $-\xi_1+\rho\xi_2\leq 1$ and therefore $
\kappa(\xi_1,\xi_2)=1+\xi_1-\rho\xi_2-2\ell(\rho \xi_2).
$ The claim \eqref{eq:hinge_lemma_claim2} follows from the identity \eqref{eq:hinge_identity}.  We therefore obtain 
\begin{align}
    \bE_{\xi_1,\xi_2}[\kappa(\xi_1,\xi_2)]&=2\bE_{\xi_1,\xi_2}[(\ell(\xi_1+\rho\xi_2)+\ell(-\xi_1+\rho\xi_2)-2\ell(\rho\xi_2))1_{\{ \xi_1>0\}}]\label{eq:hinge_lemma_it1}\\&=2\bE_{\xi_1,\xi_2}[(\ell(\xi_1+\rho\xi_2)+\ell(-\xi_1+\rho\xi_2)-2\ell(\rho\xi_2))1_{\{ \xi_1\geq\vert 1-\rho \xi_2\vert\}}]\label{eq:hinge_lemma_it2}\\&=\bE_{\xi_1,\xi_2}[(\xi_1-\vert 1-\rho \xi_2\vert)1_{\{ \xi_1\geq \vert 1-\rho\xi_2\vert\}} ]. \label{eq:hinge_lemma_it3}
\end{align}
Here equation \eqref{eq:hinge_lemma_it1} holds because $\xi_1\sim -\xi_1$ and $\kappa(\xi_1,\xi_2)=\kappa(-\xi_1,\xi_2)$. Equation \eqref{eq:hinge_lemma_it2} is true because of claim \eqref{eq:hinge_lemma_claim1} and \eqref{eq:hinge_lemma_it3} follows from claim \eqref{eq:hinge_lemma_claim2}. From \eqref{eq:hinge_lemma_it3}, we conclude that $\bE_{\xi_1,\xi_2}[\kappa(\xi_1,\xi_2)]\geq 0$.  We then observe the bound
\begin{equation}
    \begin{aligned}
    \bE_{\xi_1,\xi_2}[(\xi_1-\vert 1-\rho \xi_2\vert)1_{\{ \xi_1\geq \vert 1-\rho\xi_2\vert\}} ] &= \tfrac{1}{2}\bE_{\xi_1,\xi_2}[\xi_1-\vert 1-\rho \xi_2\vert+\vert \xi_1-\vert 1-\rho \xi_2\vert \vert]\\&\geq -\tfrac{1}{2}\bE_{\xi_2}[\vert 1-\rho \xi_2\vert]+\tfrac{1}{2}\bE_{\xi_1,\xi_2}[\vert \xi_1\vert-\vert 1-\rho \xi_2\vert]\\&=\tfrac{1}{2}\bE_{\xi_1}[\vert \xi_1\vert]-\bE_{\xi_2}[\vert1-\rho\xi_2\vert]. \label{eq:thrid_last_equation}
\end{aligned}
\end{equation}
The second inequality follows from $\bE_{\xi_1}[\xi_1]=0$ and the triangle inequality $\vert x\vert-\vert y\vert \leq \vert \vert x\vert-y\vert$. On the other hand, it holds that
\begin{equation}\label{first_last_equation}
\bE_{\xi_1}[\vert \xi_1\vert]=\sqrt{\frac{2}{\pi}}\cdot\sigma \Vert \tilde{\btheta}\Vert,
\end{equation}
and 
\begin{equation}\label{second_last_equation}
\bE_{\bxi}[\vert 1-\rho \bmu^T\bxi\vert]\leq 1+\rho \bE_{\bxi}[\vert \bmu^T\bxi\vert]\leq 1+\rho\Vert \bmu \Vert \left(\sqrt{\frac{2}{\pi}}\cdot\sigma +\Vert \bmu \Vert \right).
\end{equation}
Combing equations \eqref{eq:hinge_lemma_claim1}, \eqref{eq:hinge_lemma_claim2},  \eqref{eq:thrid_last_equation}, \eqref{first_last_equation}, and \eqref{second_last_equation}, we deduce
\begin{equation}\label{eq:last_hinge_high}
    f(\btheta)-f(\rho \bmu) \geq \frac{1}{2}\left(\sqrt{\frac{1}{2\pi}}\cdot\sigma\Vert\tilde{\btheta}\Vert-1-\rho\Vert \bmu \Vert \left(\sqrt{\frac{2}{\pi}}\cdot\sigma +\Vert \bmu \Vert \right) \right).
\end{equation}
Using the bounds $\sigma\Vert \tilde{\btheta}\Vert \geq 8+10\rho^*\sigma^2$, $\sigma \geq 0.62\Vert \bmu \Vert$ and $\rho\leq \tfrac{3}{2}\rho^*$, the result follows from \eqref{eq:last_hinge_high}.
\end{proof}
We next derive a lower bound \eqref{eq:quantity_split}, Part (b). But, first we need a basic lemma from convex analysis.
\begin{lemma}\label{lem:convex_analysis_lemma}
Suppose that $g:\R_{\geq 0}\to \R$ is a convex function with a minimizer at $\rho^*>0$. Assume that $g$ is twice differentiable on the interval $[\tfrac{3}{4}\rho^*,\tfrac{5}{4}\rho^*]$ and there exists a constant $B>0$ such that $g''(\rho)\geq B$ for all $\rho \in [\tfrac{3}{4}\rho^*,\tfrac{5}{4}\rho^*]$. Then it holds that
\begin{equation}\label{eq0:cvx_lemma}
    g(\rho)-g(\rho^*)\geq \frac{\rho^*B}{8}\vert \rho-\rho^*\vert \quad \text{for all $\rho \not\in [\tfrac{1}{2}\rho^*,\tfrac{3}{2}\rho^*]$}.
\end{equation}
\end{lemma}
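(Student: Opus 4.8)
The plan is to use the lower bound on $g''$ to show that the one-sided slopes of $g$ are already bounded away from zero at the endpoints $\tfrac{3}{4}\rho^*$ and $\tfrac{5}{4}\rho^*$ of the interval of twice-differentiability, and then to transport these slope bounds out to the far regions $\rho \le \tfrac{1}{2}\rho^*$ and $\rho \ge \tfrac{3}{2}\rho^*$ using nothing more than convexity. First I would record that, since $\rho^* > 0$ is an interior minimizer at which $g$ is differentiable, $g'(\rho^*) = 0$. Integrating the hypothesis $g'' \ge B$ over $[\rho^*, \tfrac{5}{4}\rho^*]$ then gives
\[
g'(\tfrac{5}{4}\rho^*) = g'(\rho^*) + \int_{\rho^*}^{\frac{5}{4}\rho^*} g''(t)\,dt \ge \tfrac{1}{4}B\rho^*,
\]
and symmetrically $g'(\tfrac{3}{4}\rho^*) \le -\tfrac{1}{4}B\rho^*$ by integrating over $[\tfrac{3}{4}\rho^*, \rho^*]$.

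Next I would treat the two tails separately via the supporting-line (subgradient) inequality anchored at the relevant endpoint. For $\rho \ge \tfrac{3}{2}\rho^*$, convexity at $\tfrac{5}{4}\rho^*$ yields $g(\rho) \ge g(\tfrac{5}{4}\rho^*) + g'(\tfrac{5}{4}\rho^*)(\rho - \tfrac{5}{4}\rho^*)$, and since $\rho^*$ minimizes $g$ we have $g(\tfrac{5}{4}\rho^*) \ge g(\rho^*)$; combining these with the endpoint slope bound gives
\[
g(\rho) - g(\rho^*) \ge \tfrac{1}{4}B\rho^*\left(\rho - \tfrac{5}{4}\rho^*\right).
\]
The mirror-image argument anchored at $\tfrac{3}{4}\rho^*$ gives, for $\rho \le \tfrac{1}{2}\rho^*$,
\[
g(\rho) - g(\rho^*) \ge \tfrac{1}{4}B\rho^*\left(\tfrac{3}{4}\rho^* - \rho\right).
\]

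Finally I would close with elementary algebra in each case. In the first case, $\tfrac{1}{4}B\rho^*(\rho - \tfrac{5}{4}\rho^*) \ge \tfrac{1}{8}B\rho^*(\rho - \rho^*)$ rearranges exactly to $\rho \ge \tfrac{3}{2}\rho^*$, which holds by assumption; in the second case the inequality reduces to $\rho \le \tfrac{1}{2}\rho^*$ in the same way. Since $|\rho - \rho^*| = \rho - \rho^*$ on the right tail and $\rho^* - \rho$ on the left, these two bounds combine to give $g(\rho) - g(\rho^*) \ge \tfrac{\rho^* B}{8}|\rho - \rho^*|$ for all $\rho \notin [\tfrac{1}{2}\rho^*, \tfrac{3}{2}\rho^*]$, as claimed. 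The one step I would be most careful about is that $g$ is assumed twice differentiable \emph{only} on $[\tfrac{3}{4}\rho^*, \tfrac{5}{4}\rho^*]$, so one cannot integrate $g''$ across the whole ray; the essential role of convexity is precisely to carry the endpoint slope estimates out to the tails through the supporting-line inequality, while the minimizer property supplies the auxiliary facts $g(\tfrac{5}{4}\rho^*), g(\tfrac{3}{4}\rho^*) \ge g(\rho^*)$.
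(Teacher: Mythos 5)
Your proof is correct, and it is considerably more complete than the paper's, which consists of the single sentence ``The proof follows by considering the second order Taylor series expansion of the function $g$.'' The route the paper presumably intends is: Taylor-expand around $\rho^*$ to the endpoint of the interval of known curvature, obtaining $g(\tfrac{5}{4}\rho^*)-g(\rho^*)\ge \tfrac{B}{2}(\tfrac{1}{4}\rho^*)^2 = \tfrac{B(\rho^*)^2}{32}$, and then use convexity in the form of monotonicity of chord slopes (the point $\tfrac{5}{4}\rho^*$ lies on the segment from $\rho^*$ to any $\rho\ge\tfrac{3}{2}\rho^*$) to stretch this function-value gap out to the tail. You instead bound the \emph{derivative} at the endpoint via the mean value theorem for $g'$ (equivalently, integrating $g''\ge B$ over a subinterval of length $\tfrac{1}{4}\rho^*$) and then push outward with the supporting-line inequality, together with $g(\tfrac{5}{4}\rho^*)\ge g(\rho^*)$. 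Both arguments use exactly the same two ingredients --- curvature on the small interval, convexity on the tails --- and both are tight at $\rho=\tfrac{3}{2}\rho^*$, yielding the same constant $\tfrac{\rho^* B}{8}$; your endpoint algebra ($2(\rho-\tfrac{5}{4}\rho^*)\ge\rho-\rho^*$ iff $\rho\ge\tfrac{3}{2}\rho^*$, and its mirror image) checks out. Your closing caveat is also the right one to flag: since $g''\ge B$ is only assumed on $[\tfrac{3}{4}\rho^*,\tfrac{5}{4}\rho^*]$, one cannot Taylor-expand all the way to $\rho$, and convexity is what carries the estimate to the tails; the one purely cosmetic point worth making explicit is that at the endpoint $\tfrac{5}{4}\rho^*$ the one-sided derivative is a valid subgradient of the convex function $g$, so the supporting-line inequality applies there.
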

\begin{proof}
 The proof follows by considering the second order Taylor series expansion of the function $g$.
\end{proof}

\begin{lemma}(Lower bound for (b) in \eqref{eq:quantity_split})\label{lem:lower_bound_2} Fix $\btheta\in \R^d$ and orthogonally decompose $\btheta=\rho\bmu+\tilde{\btheta}$. Suppose that $\vert \rho-\rho^*\vert \geq \tfrac{1}{2}\rho^*$. Then provided that 
$\sigma \geq c\Vert \bmu \Vert$ where the constant $c$ is defined in \eqref{eq:para_log} and \eqref{eq:para_hinge}, there exists a positive constant $A$ such that the following is true
\begin{equation}\label{eq:lemma9}
    f(\rho \bmu)-f(\bm{\theta^*}) \geq A\cdot\frac{\Vert \bmu \Vert^2}{\sigma^2}.
\end{equation}
\begin{proof} We consider the logistic and hinge loss separately. 
\item \textbf{Logistic loss.} Define the function
\[
    g(\rho):=\bE_{\bxi}\left[\log\left(1+\exp(-\rho \bmu^T\bxi)\right) \right], \quad \bxi \sim N(\bmu,\sigma^2I_d).
\]
By Lemma~\ref{lem:minimizers}, we know that $g$ is a convex function with a unique minimizer at $\rho^*:=\frac{2}{\sigma^2}$. Observe that $f(\rho \bmu)-f(\btheta^*) = g(\rho)-g(\rho^*)$; hence in order to prove \eqref{eq:lemma9}, we instead aim to bound this difference in the function $g$. From \eqref{eq:fact_affine}, we have $\bmu^T\bxi \sim N(\Vert \bmu \Vert^2,\sigma^2\Vert \bmu \Vert^2)$. It thus holds
\[
 4g''(\rho)=\bE\left(\frac{(\bmu^T\bxi)^2}{\cosh(\frac{\rho}{2} \bmu^T\bxi)^2} \right)=\frac{1}{\sigma\Vert \bmu\Vert\sqrt{2\pi }}\int_{-\infty}^{\infty}\frac{z^2}{\cosh^2(\frac{\rho z}{2})}\exp\left(-\frac{(z-\Vert \bmu \Vert^2)^2}{2\sigma^2\Vert \bmu \Vert^2}\right)dz.
\]
Upper bounding $\cosh^2(\frac{\rho z}{2})$ by $\exp(\vert \rho z\vert)$, we next obtain
\begin{align*}
    4g''(\rho)&\geq \frac{1}{\sigma\Vert \bmu\Vert\sqrt{2\pi}} \int_{-\infty}^{\infty} z^2\exp\left(-\vert \rho z \vert \right)\exp\left(-\frac{(z-\Vert \bmu \Vert^2)^2}{2\sigma^2\Vert \bmu\Vert^2}\right)dz & \\ &=\frac{1}{\sigma\Vert \bmu\Vert\sqrt{2\pi }} \int_{-\infty}^{\infty} z^2\exp\left(-\frac{(z-\Vert \bmu \Vert^2)^2+2\sigma^2\Vert \bmu\Vert^2\vert\rho z\vert}{2\sigma^2\Vert \bmu\Vert^2}\right)dz  ,&\\
    &=\frac{1}{\sigma\Vert \bmu\Vert\sqrt{2\pi }}\cdot\exp\left(-\frac{\Vert \bmu \Vert^2}{2\sigma^2} \right) \int_{-\infty}^{\infty} z^2\exp\left(-\frac{z^2-2\Vert \bmu \Vert^2 z+2\sigma^2\Vert \bmu\Vert^2\vert\rho z\vert}{2\sigma^2\Vert \bmu\Vert^2}\right)dz
    \\&= \frac{\sigma^2\Vert \bmu\Vert^2}{\sqrt{2\pi }}\cdot\exp\left(-\frac{\Vert \bmu \Vert^2}{2\sigma^2} \right)\int_{-\infty}^{+\infty} z^2\exp\left(-\frac{z^2-2\frac{\Vert \bmu \Vert}{\sigma}z+2\vert \rho z \vert \sigma\Vert\bmu\Vert }{2} \right) dz\\&\geq \frac{\sigma^2\Vert \bmu\Vert^2}{\sqrt{2\pi }}\cdot\exp\left(-\frac{\Vert \bmu \Vert^2}{2\sigma^2} \right)\int_{0}^{+\infty} z^2\exp\left(-\frac{z^2}{2}\right)\exp\left(z\left(\frac{\Vert\bmu\Vert}{\sigma}-\rho \sigma \Vert \bmu \Vert\right) \right)dz\\&\geq \frac{\sigma^2\Vert \bmu\Vert^2}{\sqrt{2\pi }}\cdot\exp\left(-\frac{\Vert \bmu \Vert^2}{2\sigma^2}-\frac{1}{2}-\left \vert \frac{\Vert \bmu \Vert}{\sigma}-\rho \sigma \Vert \bmu \Vert \right\vert  \right)\int_{0}^1 z^2 dz.
    \end{align*}
Here the second to last inequality follows from the change of variables $z\rightarrow z\sigma\Vert \bmu\Vert$. The last inequality follows from restricting the integral's domain to $[0,1]$ and also lower bounding $-\frac{z^2}{2}$ and $z\left(\frac{\Vert\bmu\Vert}{\sigma}-\rho \sigma \Vert \bmu \Vert\right)$ by $\frac{-1}{2}$ and $-\left\vert\frac{\Vert\bmu\Vert}{\sigma}-\rho \sigma \Vert \bmu \Vert\right\vert$ respectively. We see that, for $\rho \in [\tfrac{3}{4}\rho^*,\tfrac{5}{4}\rho^*]$,  the term $\exp\left(-\frac{\Vert \bmu \Vert^2}{2\sigma^2}-\frac{1}{2}-\left \vert \frac{\Vert \bmu \Vert}{\sigma}-\rho \sigma \Vert \bmu \Vert \right\vert  \right)$ is lower bounded by $\exp\left(-\frac{1}{2c^2}-\frac{1}{4c}-\frac{1}{2} \right)$. By Lemma \ref{lem:convex_analysis_lemma}, the result follows with the constant $A$ computed as follows
\begin{equation*}\label{eq:constant_A_log}
    A=\frac{1}{12\sqrt{2\pi}}\cdot\exp\left(-\frac{1}{2c^2}-\frac{1}{4c}-\frac{1}{2} \right).
\end{equation*}
\item \textbf{Hinge loss.} We begin by defining the function $h(\rho)=f(\rho\bmu)$. Therefore
\[
f(\rho \bmu)=\bE_{\bxi}[\ell(\rho\bxi^T\bmu)]=\bE_{\bxi}[(1-\rho \bxi^T\bmu)1_{\{\rho\bxi^T\bmu\leq 1 \}}].
\]
Hence, it holds that 
\[
h'(\rho)=\bmu^T\nabla f(\rho \bmu)=-\bE_{\bxi}[\bxi^T\bmu1_{\{ \rho\bxi^T\bmu\leq 1\}}].
\]
From \eqref{eq:fact_affine}, we obtain that $\bmu^T\bxi \sim N(\Vert \bmu \Vert^2,\sigma^2\Vert \bmu \Vert^2)$.  For $\rho>0$, therefore, it holds that
\begin{equation}\label{der_phi_pos}
    h'(\rho)=\frac{-1}{\sigma\Vert \bmu \Vert\sqrt{2\pi}}\int_{-\infty}^{\frac{1}{\rho}}z\exp\left(-\frac{1}{2}\cdot\left(\frac{z}{\sigma\Vert \bmu \Vert}-\frac{\Vert \bmu \Vert}{\sigma} \right)^2\right)dz.
\end{equation}
Applying chain rule thus yields 
\[
h''(\rho)=\frac{1}{\rho^3\sigma\Vert \bmu \Vert\sqrt{2\pi}}\exp\left(-\frac{1}{2}\cdot \left(\frac{1}{\rho\sigma\Vert \bmu \Vert}-\frac{\Vert \bmu \Vert}{\sigma} \right)^2 \right)\quad \text{for all $\rho>0$}.
\]
Hence, for all $\rho \in [\tfrac{3}{4}\rho^*,\tfrac{5}{4}\rho^*]$ it holds that
\[
h''(\rho)\geq \frac{64}{125{\rho^*}^3\sigma\Vert \bmu \Vert\sqrt{2\pi}}\exp\left(-\frac{1}{2}\cdot \Gamma^2 \right),
\]
where $\Gamma:=\max\left\{\left|\frac{4}{3\rho^*\sigma\Vert \bmu \Vert}-\frac{\Vert \bmu \Vert}{\sigma} \right|,\left|\frac{4}{5\rho^*\sigma\Vert \bmu \Vert}-\frac{\Vert \bmu \Vert}{\sigma} \right| \right\}$. Therefore, by Lemma \ref{lem:convex_analysis_lemma} and $\vert \rho-\rho^*\vert \geq \tfrac{1}{2}\rho^*$,  it holds that 
\begin{equation}\label{eq:constant_r_hinge_loss}
\begin{aligned}
    f(\rho\bmu)-f(\btheta^*)&\geq \frac{4}{125\sqrt{2\pi}}\cdot\frac{\sigma}{r\Vert \bmu \Vert}\cdot\exp\left(-\frac{1}{2}\cdot \Gamma^2 \right).
\end{aligned}
\end{equation}
Here $r=\rho^*\sigma^2$. Note that $r>0$ by Lemma \ref{lem:minimizers}. We aim to lower bound the right-hand side of \eqref{eq:constant_r_hinge_loss}. We denote by $w=\frac{\sigma}{r\Vert \bmu \Vert}-\frac{\Vert \bmu \Vert}{\sigma}$ the quantity defined in Lemma \ref{lem:minimizers}. In particular, by Lemma \ref{lem:minimizers}, the following holds
\begin{equation}\label{eq:w_hinge_later}
\frac{1}{\sqrt{2\pi}}\cdot\frac{\sigma}{\Vert \bmu \Vert}=\Phi(w)\cdot \exp(\tfrac{1}{2}w^2).
\end{equation}

We consider two cases.  First suppose that $w\geq \frac{1}{(3\sqrt{2}-4)c}$. Along with the assumption $\frac{\sigma}{\Vert \bmu \Vert}\geq c$ this implies that $w\geq \frac{1}{3\sqrt{2}-4}\cdot \frac{\Vert \bmu \Vert}{\sigma}$. A simple computation shows that $w^2\geq \frac{1}{2}\cdot \Gamma^2$ for all $w\geq \frac{1}{3\sqrt{2}-4}\cdot \frac{\Vert \bmu \Vert}{\sigma}$. On the other hand, by \eqref{eq:w_hinge_later} for $w\geq 0$, we obtain that $\frac{2}{\pi}\cdot \frac{\sigma^2}{\Vert \bmu \Vert^2}\geq \exp(w^2)$. Plugging in the bounds  $w^2\geq \frac{1}{2}\cdot \Gamma^2 $,  $\exp(-w^2)\geq \frac{\pi}{2}\cdot\frac{\Vert \bmu \Vert^2}{\sigma^2}$, and $\frac{\sigma}{r\Vert \bmu \Vert}\geq w\geq \frac{1}{(3\sqrt{2}-4)c}$ into the right-hand-side of \eqref{eq:constant_r_hinge_loss}, we obtain that 
\begin{equation*}
    f(\rho\bmu)-f(\btheta^*)\geq \frac{\sqrt{2\pi}}{125(3\sqrt{2}-4)c}\cdot\frac{\Vert \bmu \Vert^2}{\sigma^2}.
\end{equation*}
Next, suppose that $w<\frac{1}{(3\sqrt{2}-4)c}$. In this case, the two factors $\frac{\sigma}{r\Vert \bmu \Vert}$ and $\exp\left(-\frac{1}{2}\cdot\Gamma^2 \right)$ in \eqref{eq:constant_r_hinge_loss} are lower bounded separately. Note that it always holds that $w\geq -\frac{\Vert \bmu \Vert}{\sigma}$ as $r>0$. Therefore, it is easy to see that the latter factor is lower bounded by $\exp\left(-\frac{1}{2} \left(\frac{4}{3(3\sqrt{2}-4)c}+\frac{1}{3c} \right)^2\right)$. Hence, it remains to bound the factor $\frac{\sigma}{r\Vert \bmu \Vert}$ in \eqref{eq:constant_r_hinge_loss}. To this end, we show that $w\geq -\frac{\Vert \bmu \Vert}{2\sigma}$ for all $\frac{\sigma}{\Vert \bmu \Vert}\geq c$.  Note that a chain of change of variables gives
  \begin{equation*}\label{eq:myeq_blah}
  \begin{aligned}
      \Phi(w)\cdot\exp\left(\frac{w^2}{2}\right)&=\frac{1}{\sqrt{2\pi}}\cdot\int_{0}^{+\infty}\exp(-\frac{1}{2}t^2)\cdot\exp(wt) \, dt.
 \end{aligned}\end{equation*}
The right-hand side of \eqref{eq:w_hinge_later} is an increasing function with respect to $w$. Therefore it suffices to show that the following holds
\begin{equation}\label{eq:sigma/bmu>c}
    \frac{1}{\sqrt{2\pi}}\cdot \frac{\sigma}{\Vert \bmu \Vert} \geq \Phi\left(-\frac{\Vert \bmu \Vert}{2\sigma} \right)\cdot \exp\left(\frac{\Vert \bmu \Vert^2}{8\sigma^2} \right) \quad \text{whenever} \quad \frac{\sigma}{\Vert \bmu \Vert}\geq c. 
\end{equation}
However, it can be verified by a plot that $\tfrac{1}{\sqrt{2\pi}}\geq t\cdot \Phi\left(-\tfrac{t}{2} \right)\cdot \exp\left(\frac{t^2}{8} \right)$ holds for all $t\in (0,\frac{1}{c})$. Therefore, we have shown that $w\geq -\frac{\Vert \bmu \Vert}{2\sigma}$ which implies that $\frac{\sigma}{r\Vert \bmu \Vert}\geq \frac{\Vert \bmu \Vert}{2\sigma}$. Finally we lower bound the quantity $\frac{\sigma}{r\Vert\bmu\Vert}$ by $c\cdot\frac{\Vert \bmu \Vert^2}{2\sigma^2}$. We have concluded \eqref{eq:lemma9} in case of hinge loss function where the constant $A$ can be computed as follows
\begin{equation*}\label{eq:constant_A_hinge}
    A=\min\left\{\frac{c}{2}\cdot\exp\left(-\frac{1}{2} \left(\frac{4}{3(3\sqrt{2}-4)c}+\frac{1}{3c} \right)^2\right), \frac{\sqrt{2\pi}}{125(3\sqrt{2}-4)c} \right\}.
\end{equation*}
\end{proof}
\end{lemma}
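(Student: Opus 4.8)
The plan is to exploit the fact that the quantity in question depends only on the scalar $\rho$. Since Lemma~\ref{lem:minimizers} identifies the minimizer as $\btheta^*=\rho^*\bmu$, I would define the one-variable function $g(\rho):=f(\rho\bmu)$, which by Lemma~\ref{lem:minimizers} is convex with a unique minimizer at $\rho^*>0$, and write $f(\rho\bmu)-f(\btheta^*)=g(\rho)-g(\rho^*)$. The hypothesis $\vert\rho-\rho^*\vert\geq\tfrac12\rho^*$ places $\rho$ outside $[\tfrac12\rho^*,\tfrac32\rho^*]$, so Lemma~\ref{lem:convex_analysis_lemma} becomes the engine of the proof: it suffices to exhibit a constant $B>0$ with $g''(\rho)\geq B$ throughout the central interval $[\tfrac34\rho^*,\tfrac54\rho^*]$, after which $g(\rho)-g(\rho^*)\geq\tfrac{\rho^*B}{8}\vert\rho-\rho^*\vert\geq\tfrac{(\rho^*)^2B}{16}$. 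Thus the entire problem reduces to producing an explicit strong-convexity constant on the central interval and then verifying that $(\rho^*)^2B$ scales like $\Vert\bmu\Vert^2/\sigma^2$.

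For the logistic loss I would differentiate $g$ twice and use \eqref{eq:fact_affine} to pass to a one-dimensional Gaussian integral in $z=\bmu^T\bxi\sim N(\Vert\bmu\Vert^2,\sigma^2\Vert\bmu\Vert^2)$, giving $4g''(\rho)=\bE_{\bxi}[(\bmu^T\bxi)^2/\cosh^2(\tfrac{\rho}{2}\bmu^T\bxi)]$. To bound this from below I would overestimate $\cosh^2(\tfrac{\rho z}{2})$ by $\exp(\vert\rho z\vert)$, complete the square inside the Gaussian exponent, change variables $z\mapsto z\sigma\Vert\bmu\Vert$, and restrict the domain to $[0,1]$ to extract a clean $\rho$-independent factor. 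Over $\rho\in[\tfrac34\rho^*,\tfrac54\rho^*]$ with $\rho^*=2/\sigma^2$, the surviving exponential $\exp(-\tfrac{\Vert\bmu\Vert^2}{2\sigma^2}-\tfrac12-\vert\tfrac{\Vert\bmu\Vert}{\sigma}-\rho\sigma\Vert\bmu\Vert\vert)$ is uniformly bounded below using $\sigma\geq c\Vert\bmu\Vert$ (so $\Vert\bmu\Vert/\sigma\leq 1/c$), yielding $B\propto\sigma^2\Vert\bmu\Vert^2$; combined with $(\rho^*)^2=4/\sigma^4$ this gives the target scaling $\Vert\bmu\Vert^2/\sigma^2$ with $A=\tfrac{1}{12\sqrt{2\pi}}\exp(-\tfrac{1}{2c^2}-\tfrac{1}{4c}-\tfrac12)$.

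The hinge loss is where I expect the main obstacle. Here $h(\rho):=f(\rho\bmu)=\bE_{\bxi}[(1-\rho\bmu^T\bxi)1_{\{\rho\bmu^T\bxi\leq1\}}]$, and differentiating through the Gaussian via \eqref{eq:fact_affine} yields the closed form $h''(\rho)=\tfrac{1}{\rho^3\sigma\Vert\bmu\Vert\sqrt{2\pi}}\exp(-\tfrac12(\tfrac{1}{\rho\sigma\Vert\bmu\Vert}-\tfrac{\Vert\bmu\Vert}{\sigma})^2)$ for $\rho>0$. Lower bounding over $[\tfrac34\rho^*,\tfrac54\rho^*]$ introduces the constant $\Gamma$, the larger of the two endpoint exponents, and $r:=\rho^*\sigma^2$, so Lemma~\ref{lem:convex_analysis_lemma} delivers a bound of the form $\tfrac{4}{125\sqrt{2\pi}}\cdot\tfrac{\sigma}{r\Vert\bmu\Vert}\exp(-\tfrac12\Gamma^2)$. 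The difficulty is converting this transcendental expression into $A\Vert\bmu\Vert^2/\sigma^2$, which forces me to invoke the defining equation \eqref{eq:w_hinge} for $w=\tfrac{\sigma}{r\Vert\bmu\Vert}-\tfrac{\Vert\bmu\Vert}{\sigma}$ and to split into cases according to the size of $w$.

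In the large-$w$ case, $w\geq\tfrac{1}{(3\sqrt2-4)c}$, I would use \eqref{eq:w_hinge} to trade $\exp(w^2)$ for $\tfrac{\sigma^2}{\Vert\bmu\Vert^2}$ and check that $w^2\geq\tfrac12\Gamma^2$, so both factors are controlled directly. In the small-$w$ case I would bound the two factors $\tfrac{\sigma}{r\Vert\bmu\Vert}$ and $\exp(-\tfrac12\Gamma^2)$ separately: the exponential is bounded below by an explicit constant since $w\geq-\tfrac{\Vert\bmu\Vert}{\sigma}$, while for the first factor I must establish $w\geq-\tfrac{\Vert\bmu\Vert}{2\sigma}$, i.e. $\tfrac{\sigma}{r\Vert\bmu\Vert}\geq\tfrac{\Vert\bmu\Vert}{2\sigma}$. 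Using that the right-hand side of \eqref{eq:w_hinge} is increasing in $w$, this reduces to the elementary inequality $\tfrac{1}{\sqrt{2\pi}}\geq t\,\Phi(-\tfrac{t}{2})\exp(\tfrac{t^2}{8})$ for $t\in(0,1/c)$, verifiable by a plot. Taking the minimum of the two case constants then yields the final $A$, completing the bound \eqref{eq:lemma9}.
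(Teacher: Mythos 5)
Your proposal is correct and follows essentially the same route as the paper's proof: the same reduction to a strong-convexity bound on $[\tfrac34\rho^*,\tfrac54\rho^*]$ via Lemma~\ref{lem:convex_analysis_lemma}, the same $\cosh^2(\tfrac{\rho z}{2})\leq\exp(\vert\rho z\vert)$ estimate and change of variables for the logistic case, and the same closed form for $h''$, constant $\Gamma$, and case split on $w$ at $\tfrac{1}{(3\sqrt{2}-4)c}$ (including the plot-verified inequality $\tfrac{1}{\sqrt{2\pi}}\geq t\,\Phi(-\tfrac{t}{2})\exp(\tfrac{t^2}{8})$) for the hinge case. The constants $A$ you arrive at match the paper's in both cases, so there is nothing to add.
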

We now have the ingredients to prove Theorem \ref{thm:high}.
\begin{proof}[Proof of Theorem \ref{thm:high}]
Consider the set $C$ and function $V$ defined in \eqref{eq:CV_high_recall}:
\begin{equation}\label{eq:CV_high_proofthm3}
    C:=\left\{\btheta: \vert \rho -\rho^*\vert<\tfrac{1}{2}\rho^* \text{ and } \sigma \Vert \tilde{\btheta}\Vert \leq c'\right\} \quad \text{ and }\quad  V(\btheta)=\frac{1}{2\alpha}\Vert \btheta-\btheta^*\Vert^2.
\end{equation}
We let $c'$ to be defined as in Lemma \ref{lem:lower_bound_1}. This means that $c'$ equals to $436$ and $8+10\rho^*\sigma^2$ in case of logistic and hinge loss respectively. We next show that there exists a positive constant $\delta$ such that the following is true
\begin{equation}
    \bP_{\bxi}\left(\bxi^T\btheta\geq 1\right)\geq \delta \quad \text{for all}\quad \btheta \in C.
    \end{equation}
Let $\btheta\in C$ and orthogonally decompose it into $\btheta=\rho\bmu+\tilde{\btheta}$. We have that $\bxi^T\btheta=\rho\bxi^T\bmu+\bxi^T\tilde{\btheta}$. Note that $\rho>0$ as $\btheta\in C$. By \eqref{eqn:fact_independence}, we see that $\bxi^T\btheta$ and $\bxi^T\tilde{\btheta}$ are independent normal random variables. It thus holds that 
\begin{equation}
    \bP_{\bxi}\left(\bxi^T\btheta\geq 1\right)\geq \bP_{\bxi}\left(\rho\bxi^T\bmu\geq 1\right)\cdot \bP_{\bxi}\left(\bxi^T\tilde{\btheta}\geq 0\right) =\frac{1}{2}\cdot\bP_{\bxi}\left(\bxi^T\bmu\geq \frac{1}{\rho}\right).
\end{equation}
Rewrite the inequality $\bxi^T\bmu\geq \frac{1}{\rho}$ by $z:=\frac{\bxi^T\bmu-\Vert \bmu \Vert^2}{\sigma\Vert \bmu \Vert}\geq \frac{\frac{1}{\rho}-\Vert \bmu \Vert^2}{\sigma\Vert 
\bmu \Vert}$. Noting that $z\sim N(0,1)$ and using the inequality $\frac{2}{\rho^*}\geq\frac{1}{\rho}$, we obtain that 
\begin{equation}\label{eq:delta_proof}
    \bP_{\bxi}\left(\bxi^T\btheta\geq 1\right)\geq \delta:= \frac{1}{2}\cdot\Phi^c\left(\frac{\frac{2}{\rho^*}-\Vert\bmu \Vert^2}{\sigma\Vert \bmu \Vert}\right).
\end{equation} 
We next show that the pair $(C,V)$ satisfies the drift equation \eqref{eq:driftequation}.  Let us rewrite \eqref{eq:quantity_split}:
\begin{equation}\label{eq:quantity_split2}
\begin{aligned}
f(\btheta_{k-1})-f(\btheta^*)= & \underbrace{f(\btheta_{k-1})-f(\rho_{k-1}\bmu)}_{(a)}+\underbrace{f(\rho_{k-1}\bmu)-f(\btheta^*)}_{(b)}.
\end{aligned}
\end{equation}
By Lemmas \ref{lem:lower_bound_1} and \ref{lem:lower_bound_2}, both terms in $(a)$ and $(b)$ in \eqref{eq:quantity_split2} are non-negative . Assume that $\btheta_{k-1}\not\in C$. Therefore, either $\sigma \Vert \tilde{\btheta}_{k-1}\Vert\geq c'$ or $\vert\rho_{k-1}-\rho^*\vert\geq \tfrac{1}{2}\rho^*$; this implies that the quantity $(a)$ is at least 1 or the quantity $(b)$ is at least $A\cdot\frac{\Vert \bmu \Vert^2}{\sigma^2}$ respectively. The constant $A$ in Lemma \ref{lem:lower_bound_2} satisfies $1\geq A\cdot \frac{\Vert \bmu \Vert^2}{\sigma^2}$ for all $\frac{\sigma}{\Vert \bmu \Vert}\geq c$. Hence it holds that 
\begin{equation}
    A\cdot\frac{\Vert \bmu \Vert^2}{\sigma^2}\leq f(\btheta_{k-1})-f(\btheta^*) \quad \text{for all } \btheta_{k-1}\not\in C.
\end{equation}
We use \eqref{eq:high_convex_tech_lemma} next to establish the drift equation \eqref{eq:driftequation}. Recall that the following holds 
    \begin{equation} 
       f(\bm{\theta}_{k-1})-f(\bm{\theta}^*)\leq \frac{1}{2\alpha}\left(\Vert  \bm{\theta}_{k-1}-\bm{\theta} ^*\Vert^2-\bE\left[\Vert \bm{\theta}_{k}-\bm{\theta}^* \Vert^2\, |\mathcal{F}_{k-1}\right] \right)+\frac{\alpha}{2}\left(\Vert \bmu \Vert^2+d\sigma^2\right).
    \end{equation}
Combining the last two displayed inequalities and using the definition of function $V$, we obtain that 
\begin{equation}
     \left(\bE\left[V(\btheta_k)|\mathcal{F}_{k-1}\right]-V(\btheta_{k-1})\right)\cdot 1_{\{\btheta_{k-1}\not\in C\}} \leq \left(\frac{\alpha}{2}(\Vert \bmu \Vert^2+d\sigma^2)-A\cdot\frac{\Vert \bmu\Vert^2}{\sigma^2}\right)\cdot 1_{\{\btheta_{k-1}\not\in C\}}.
\end{equation}
Therefore, by choosing $\alpha<A\cdot\frac{\Vert \bmu \Vert^2}{\sigma^2(\Vert \bmu \Vert^2+d\sigma^2)}$, we obtain the drift equation \eqref{eq:driftequation} holds with $b:=\frac{A}{2}\cdot\frac{\Vert \bmu \Vert^2}{\sigma^2}$. Next, we obtain bounds on $\bE[\tau_m]$ for $m\geq 1$. By Lemma \ref{lem:drift_from_meyn} and a simple induction, we obtain that 
 \begin{equation}
      \bE[\tau_m]\leq \tfrac{1}{b}V(0)+\tfrac{1}{b}(m-1)\sup_{\btheta\in C} V(\btheta).
 \end{equation}
 Compactness of set $C$ yields that, $\sup_{\btheta\in C} V(\btheta)<+\infty$. Therefore, for some constant $\gamma$, the following is true
 \begin{equation}\label{eq:bound_on_tau_m_high}
      \bE[\tau_m] \leq \gamma\cdot m.
 \end{equation}
Combining \eqref{eq:bound_on_tau_m_high}, \eqref{eq:delta_proof} and Lemma \ref{lem:ETleqET_C}, the proof immediately follows. 
\end{proof}
\subsection{Angle bound, proof of Theorem \ref{thm:angle_bound}}\label{sec:theorem_angle}

\begin{proof}[Proof of Theorem \ref{thm:angle_bound}] Recall the SGD algorithm for logistic regression uses the update
\[
\btheta_k=\btheta_{k-1}+\frac{\alpha\bxi_k}{1+\exp(\bxi_k^T\btheta_{k-1})}
\]
and for hinge regression
\[
\btheta_k=\btheta_{k-1}+\alpha 1_{\{\bxi_k^T\btheta_{k-1}\leq 1\}}\bxi_{k-1}
\]
where $\btheta_0=\bm{0}$ and $\bxi_1,\bxi_2,\cdots \stackrel{i.i.d}{\sim} N(\bmu,\sigma^2I_d)$.  It clearly holds in both cases that
\begin{equation}
    \left\vert\vert \bm{v}^T\btheta_{k}\vert-\vert \bm{v}^T\btheta_{k-1}\vert \right\vert\leq \alpha \vert \bm{v}^T\bxi_{k-1}\vert.
\end{equation}
We define a new random variable $X_k:=\vert \bm{v}^T\btheta_k\vert-k \sigma\alpha \sqrt{\frac{2}{\pi}} $. Observe that $\bE\left[\left|X_0\right|\right] = 0$ and for all $k \ge 1$, it holds that
\begin{align*}
    \bE\left[\left\vert X_k\right\vert\right]&\leq \alpha\sum_{i=1}^k \bE\left[\left\vert \bm{v}^T\bxi_k \right\vert\right]+k\sigma\alpha \sqrt{\frac{2}{\pi}} <\infty,
\end{align*}
\textit{i.e.}, $X_k \in \mathcal{L}^1$ for all $k \geq 1$. Next, we have for any $k \ge 1$
\begin{align*}
  \bE\left[\left\vert X_{k}-X_{k-1}\right\vert\,|\,\mathcal{F}_{k-1} \right] \leq  \bE\left[\left\vert \vert \bm{v}^T\btheta_k\vert -\vert \bm{v}^T\btheta_{k-1}\vert\right\vert\,|\,\mathcal{F}_{k-1} \right]+\sigma\alpha  \sqrt{\frac{2}{\pi}}  &\leq 2\sigma\alpha  \sqrt{\frac{2}{\pi}} .
\end{align*}
Here we used that $\bm{v}^T\bxi_k \sim N(0,\sigma^2)$ along with \eqref{fact:norm_Gaussians}. We also see that \begin{align*}
    \bE\left[\vert \bm{v}^T\btheta_{k}\vert\,|\,\mathcal{F}_{k-1} \right]&\leq \vert \bm{v}^T\btheta_{k-1}\vert+ \sigma \alpha\sqrt{\frac{2}{\pi}} \quad \Rightarrow \quad  \bE\left[X_{k}\,|\,\mathcal{F}_{k-1} \right]\leq X_{k-1}.
\end{align*}
Therefore, we have shown that $X_0,X_1,\cdots$ is a super-martingale. By Theorem \ref{thm:Durret_Martingale}, we have $\bE\left[X_{T}\right]\leq 0$. The result follows.

\end{proof}

\section{Numerical Experiments} \label{sec:Num_Experiment}

We investigate the performance of our termination test on two popular data sets, MNIST \citep{MNIST} and CIFAR-10 \citep{cifar10}, as well as synthetic data generated from Gaussians and heavy-tailed student t-distributions. All tests were performed using our zero overhead stopping criteria outlined in \eqref{eq: practical_termination_test}; experiments using our test which required an extra sample \eqref{eq: termination_test} are not presented since the behaviors of the two criteria were indistinguishable on all data sets.

\paragraph{Comparison with a popular stopping criterion.} We include as a baseline a popular termination test, the small validation set (SVS) \citep{Prechelt2012}. The SVS termination test is as follows. One fixes a validation set of $p$ instances $(\bzeta^{\rm V}_1,y^{\rm V}_1)$, \ldots, $(\bzeta^{\rm V}_p,y^{\rm V}_p)$ drawn from the same distribution as the training data. Then for $m = 1, 2, \ldots$, one checks the fraction correct of the current classifier $\btheta_{ml}$, where $ml$ is the iteration index, on the $p$ instances.  In other words, the SVS test is run once every $l$ iterations.   If the fraction correct fails to increase compared to the last run of the SVS, then the SGD iterations are terminated. 

Note the computational overhead of running the small validation set is about $p$ times the cost of one SGD iteration.  Therefore, in order to make the overhead only a constant factor, we choose $l=2p$, meaning an approximately 50\% overhead for SVS. In contrast, the overhead for \eqref{eq: practical_termination_test} is
0. The value of $p$ is a tuning parameter for SVS; we exhibit results for three different $p$ values (see Figs.~\ref{fig:normal_scatter}, \ref{fig:ht_scatter}, \ref{fig:mnist_scatter}, \ref{fig:cifar_scatter} ).

\begin{figure*}[htp!]
\begin{center}
\subfigure[]{\includegraphics[height=3cm]{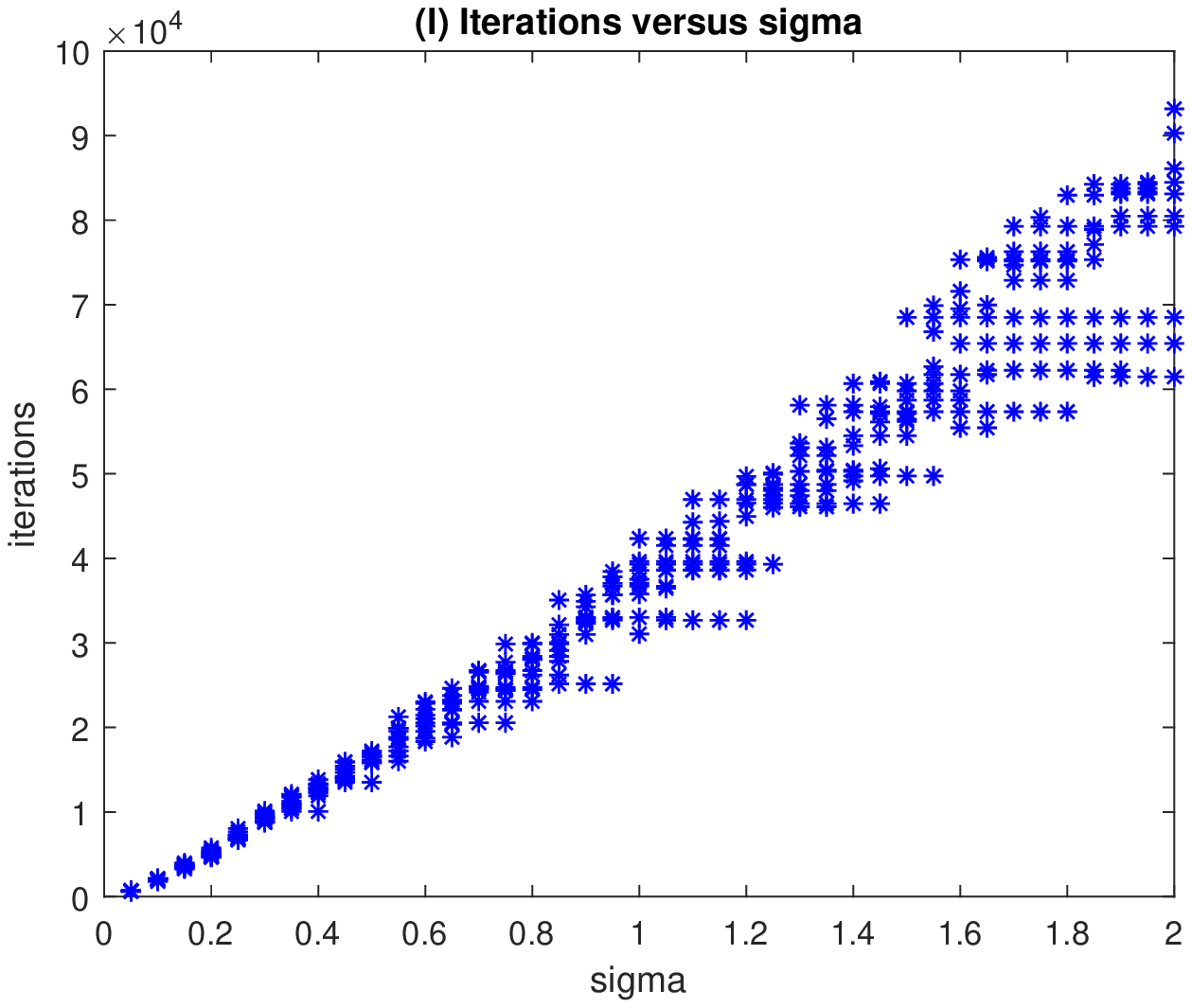}} \quad
\subfigure[\label{fig:black}]{\includegraphics[height=3cm]{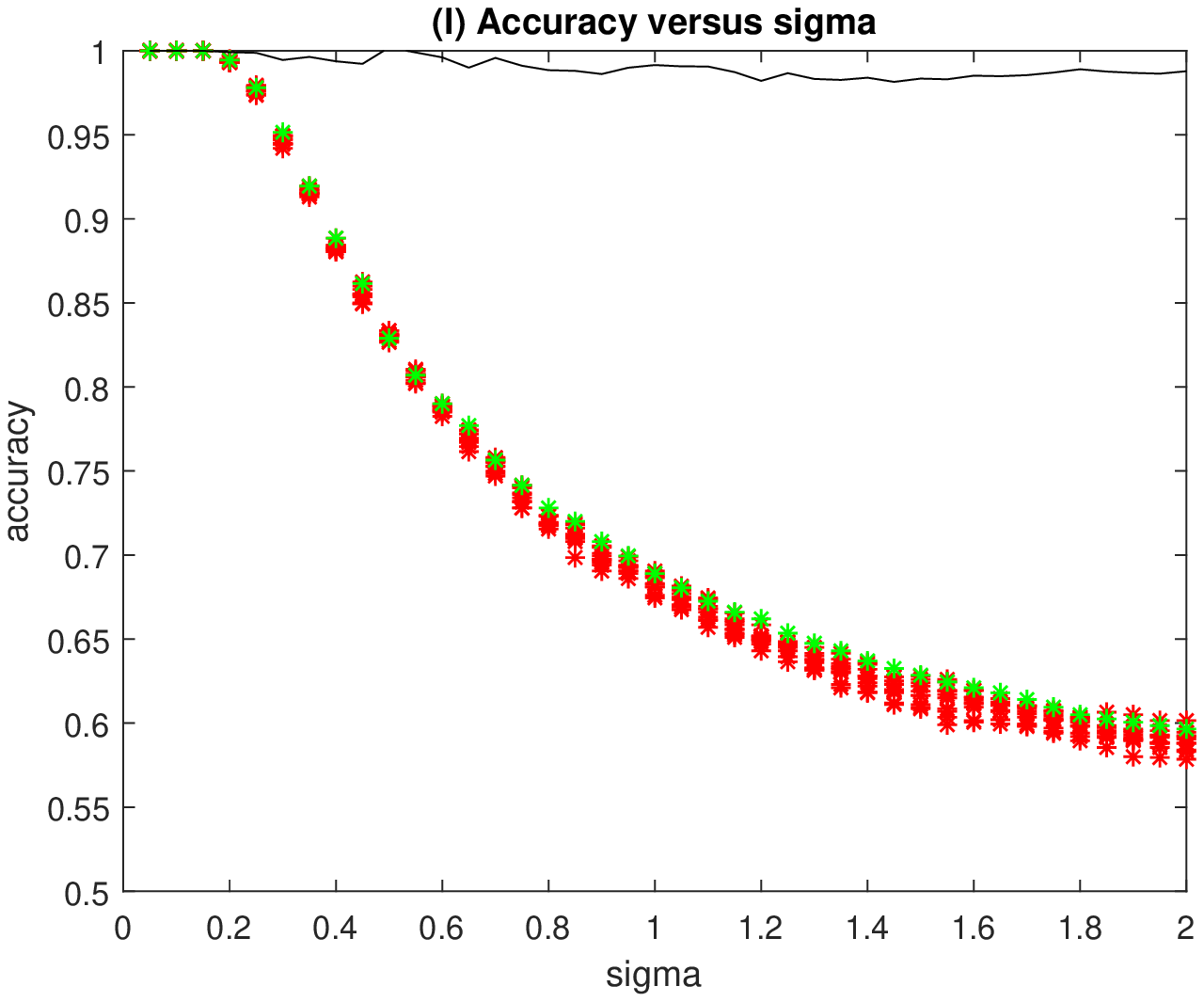}} \quad 
\subfigure[]{\includegraphics[height=3cm]{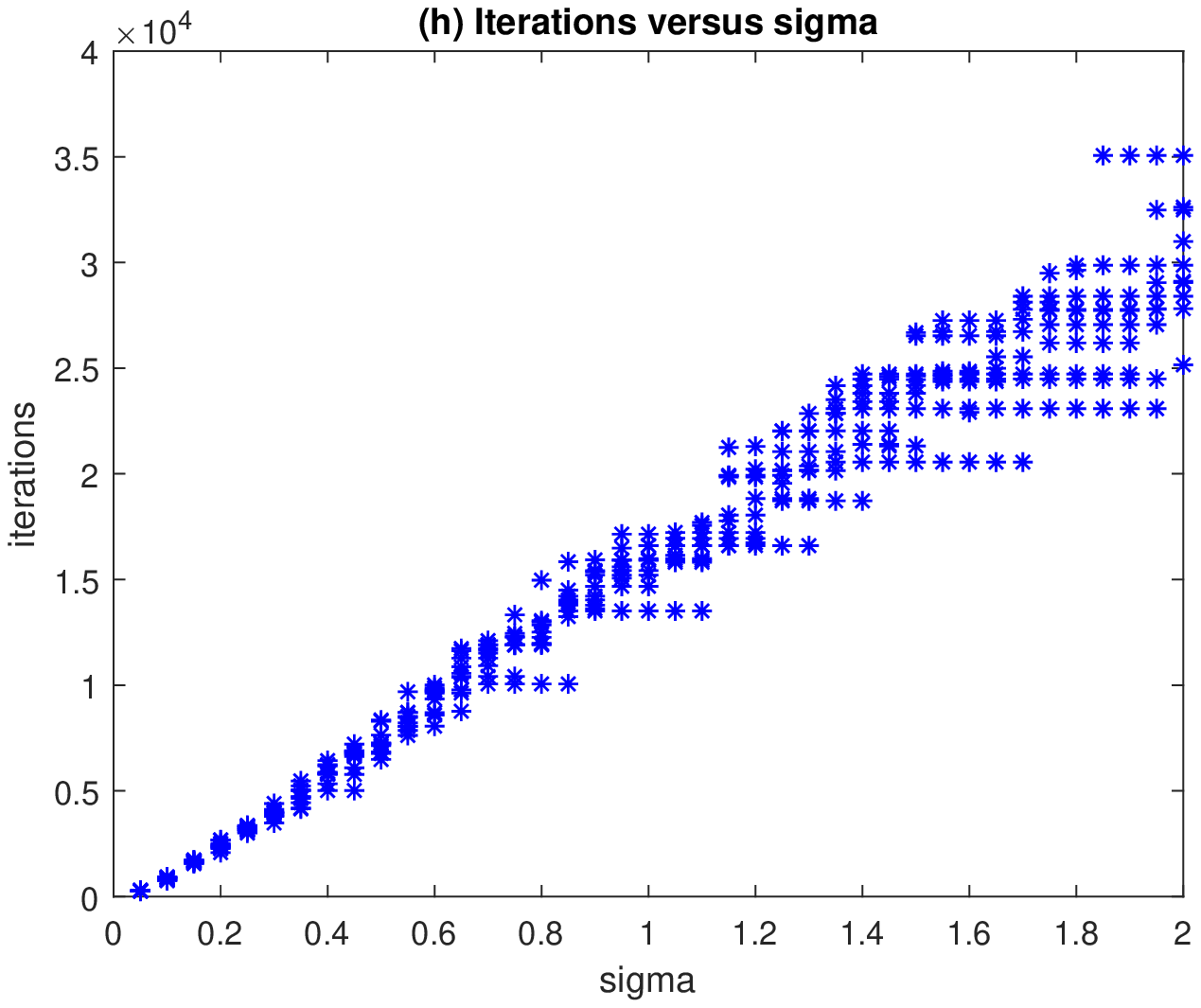}} \quad \subfigure[]{\includegraphics[height=3cm]{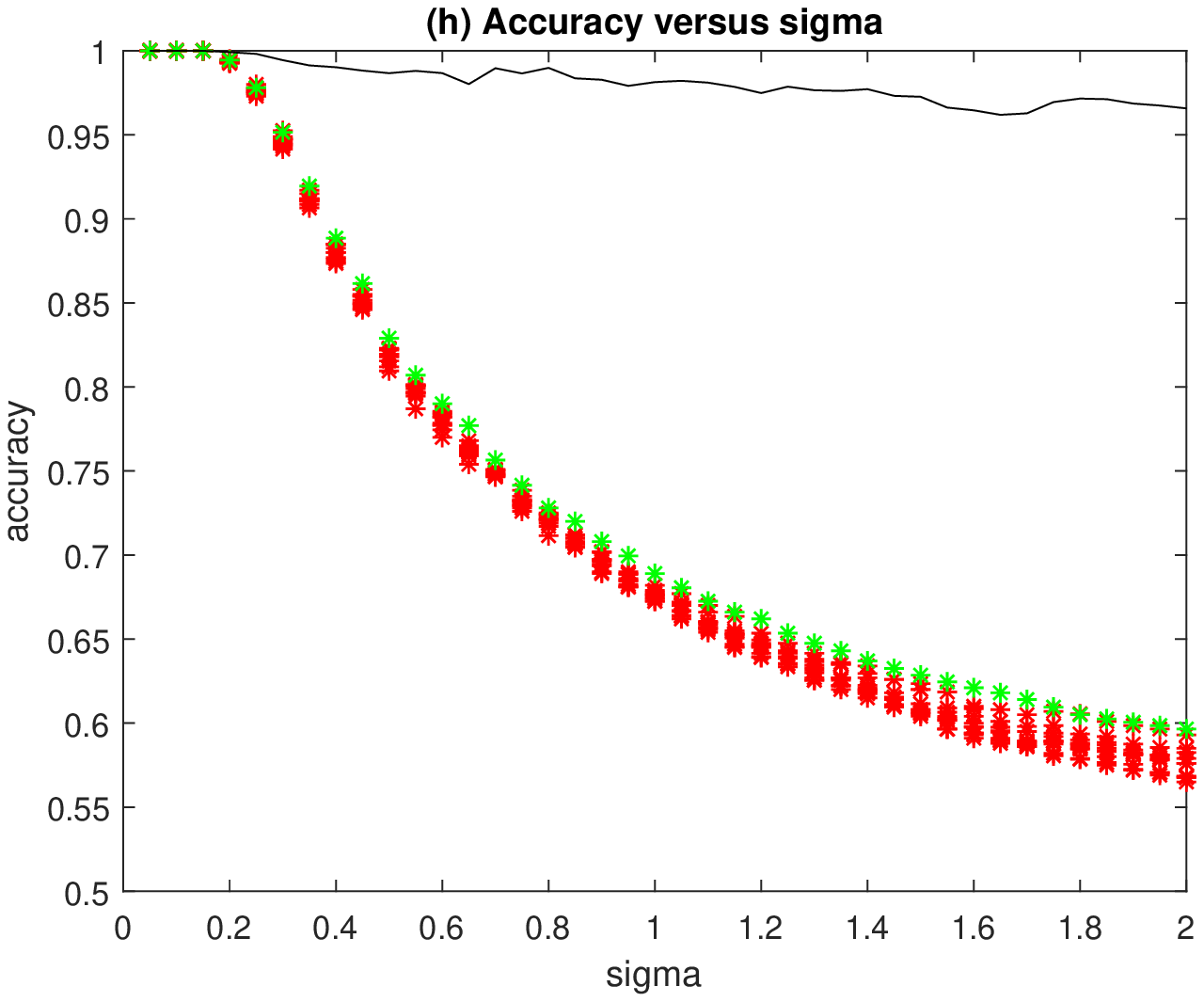}}
\end{center}
\caption{Performance of stopping criterion \eqref{eq: practical_termination_test} on a mixture of Gaussians as $\sigma$ is varied.  Plots $(a),(b)$ are logistic and $(c),(d)$ are hinge.  All plots show tests for values of $\sigma$ equally spaced from 0.05 to 2.0.  For each value of $\sigma$, ten trials were run. Plots $(a),(c)$ show the relationship between $\sigma$ and $k$, the iteration number when \eqref{eq: practical_termination_test} first holds. Plots $(b),(d)$ show the accuracy as red asterisks.  The green asterisks show the accuracy of the optimal classifier.  The black curve on the right is the ratio of the average accuracy (over 10 trials) of the classifier 
when \eqref{eq: practical_termination_test} holds to the accuracy of the optimal classifier.}
\label{fig:acctime}
\end{figure*}

\paragraph{Measuring the accuracy.} In all the experiments, we measure the performance of a method with a score, generally known as ``accuracy," that is the fraction correct on a large validation set drawn from the same distribution as the training data.  Thus, 1.0 is perfect accuracy, while 0.5 means that $\btheta_k$ is no better at classifying than random guessing.  It is important to note that even on data for which the means $\bmu_0,\bmu_1$ are known a priori ({\em e.g.}, synthetic data), the score of the optimal $\btheta^*$
will not be 1.0 because the large validation set itself is noisy.  

We center the data so that the linear classifier is homogeneous. In a preliminary phase, 100 samples are drawn from the training set. From this, $\bmu_0$ and $\bmu_1$ are estimated, and then the average of these estimates is used to offset training instances during SGD.

\paragraph{Parameter settings.} After centering, the vectors $\btheta$ and $\bxi$ scale inversely, so the step-size parameter $\alpha$ should scale as $1/\sigma^2$.  Therefore, we take the step-size to be 
$\tilde\alpha/\tilde\sigma^2$.  Here, $\tilde\sigma^2$ is the average of $\norm{\bzeta_j-\tilde\bmu_{y_j}}^2$, and $\tilde\bmu_i$ ($i=0$ or $i=1$)  is the estimate of $\bmu_i$, averaged over the two classes. We compute the quantities $\tilde\sigma^2$ and $\tilde\bmu_i$ using the 100 samples described in the preceding paragraph. Note that for the Gaussian mixture model, the expected value of $\tilde\sigma^2$ is $\sigma^2d$.  
For the synthetic data, the means and variances are known exactly a priori, so the estimation procedures described in the previous two paragraphs are unnecessary.  However, we used them anyway in order to be consistent with the tests on the realistic data.

The parameter $\tilde\alpha$ described in the last paragraph is a scale-free tuning parameter.
It is known (see, e.g., \cite{Nemirovski_Robust_Stochastic_1}) that a smaller $\tilde\alpha$ corresponds to more iterations but greater ultimate accuracy under a reasonable model of the data.  Our termination test is obviously sensitive to the choice of $\tilde\alpha$: the condition $\bxi_{k+1}^T\btheta_k\ge 1$ cannot hold unless $\norm{\btheta_k}\ge 1/\norm{\bxi_{k+1}}$, but $\bE\left[\norm{\btheta_k}\right]\le O(\alpha k)$. See also Theorems~\ref{thm:low} and \ref{thm:high}.
On the other hand, SVS is only mildly sensitive to $\tilde\alpha$, according to our testing.  Indeed, there is an upper bound of $pl$ on the total number of iterations possible before termination using the SVS condition, independent of $\tilde\alpha$ and of all other aspects of the problem.  The dependence of the termination test on $\tilde\alpha$ is evidently desirable because the user is presumably seeking greater accuracy when a smaller value of $\tilde\alpha$ is selected.

\subsection{Experiments with synthetic data}
\label{sec:comp-sim}

\begin{figure*}[htp!]
\centering
  \includegraphics[height=2.5cm]{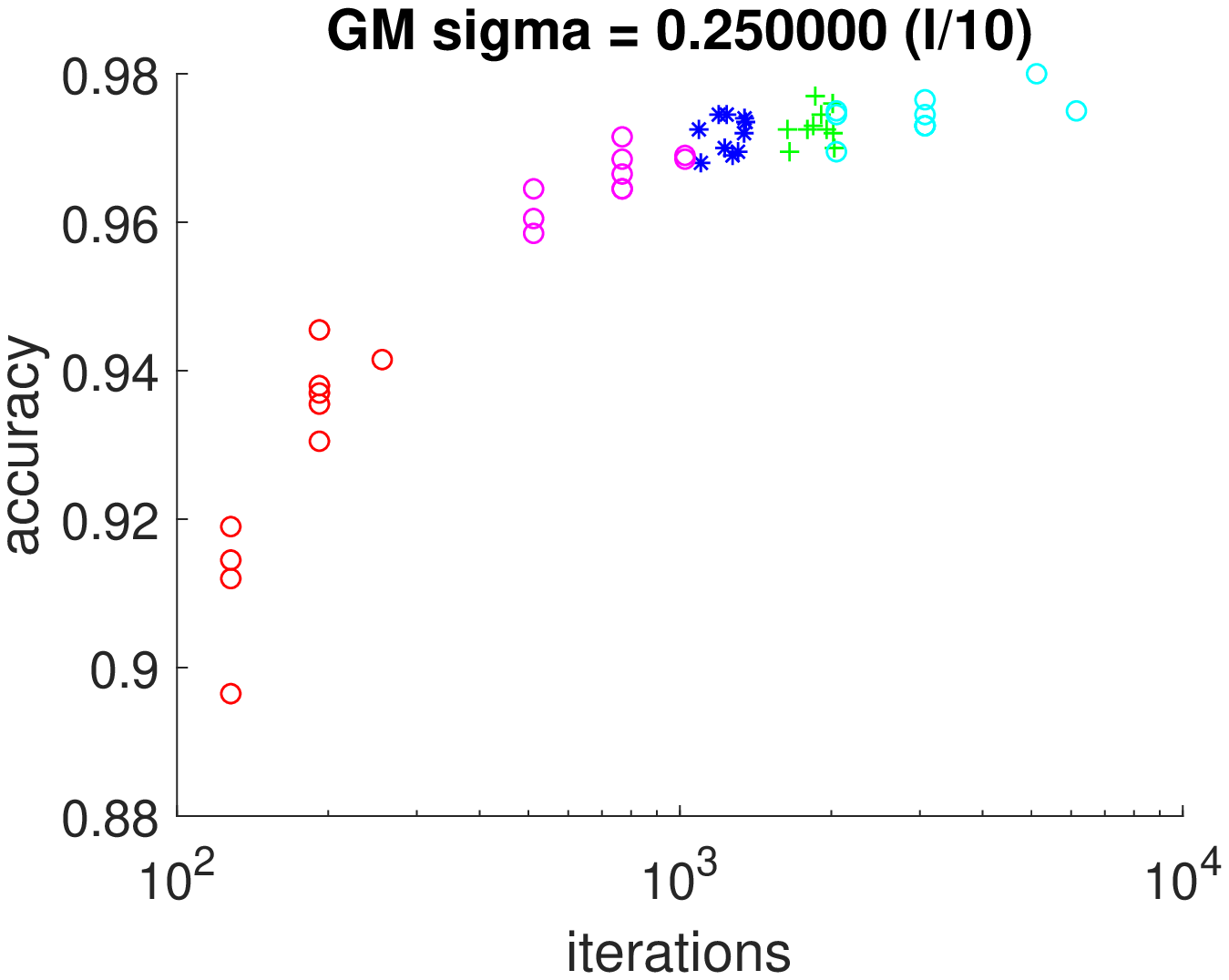} \qquad \includegraphics[height=2.5cm]{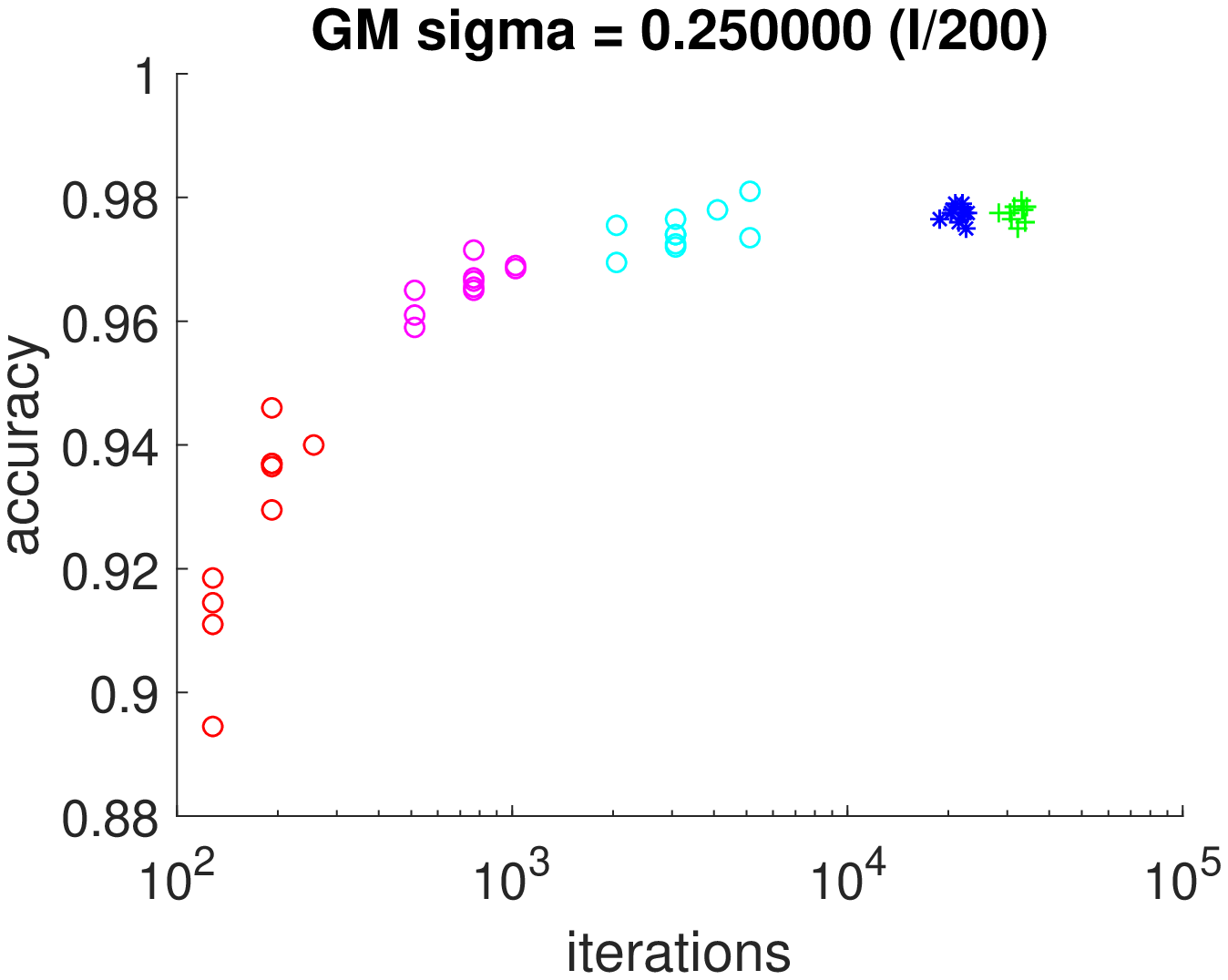} \qquad
\includegraphics[height=2.5cm]{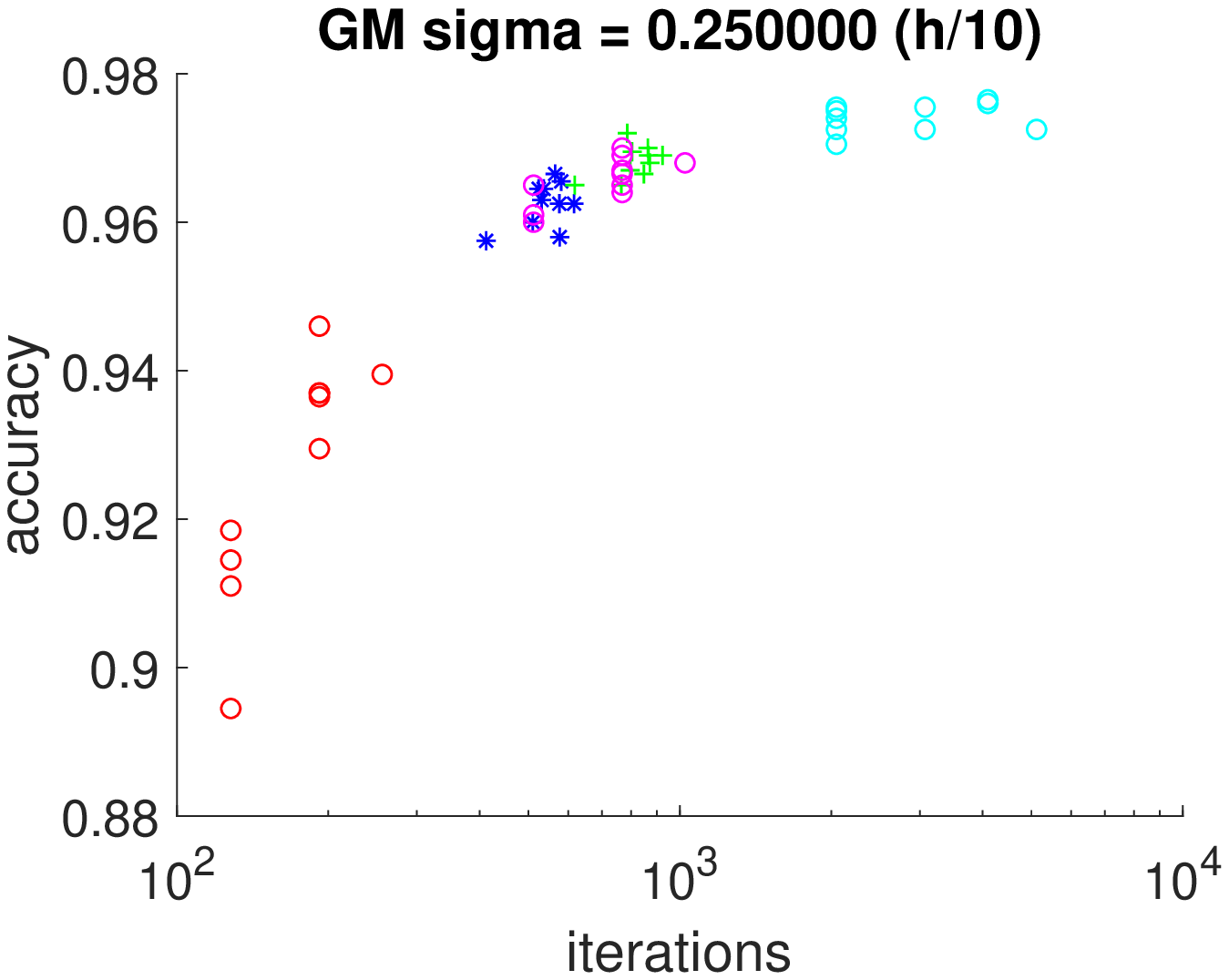}\qquad \includegraphics[height=2.5cm]{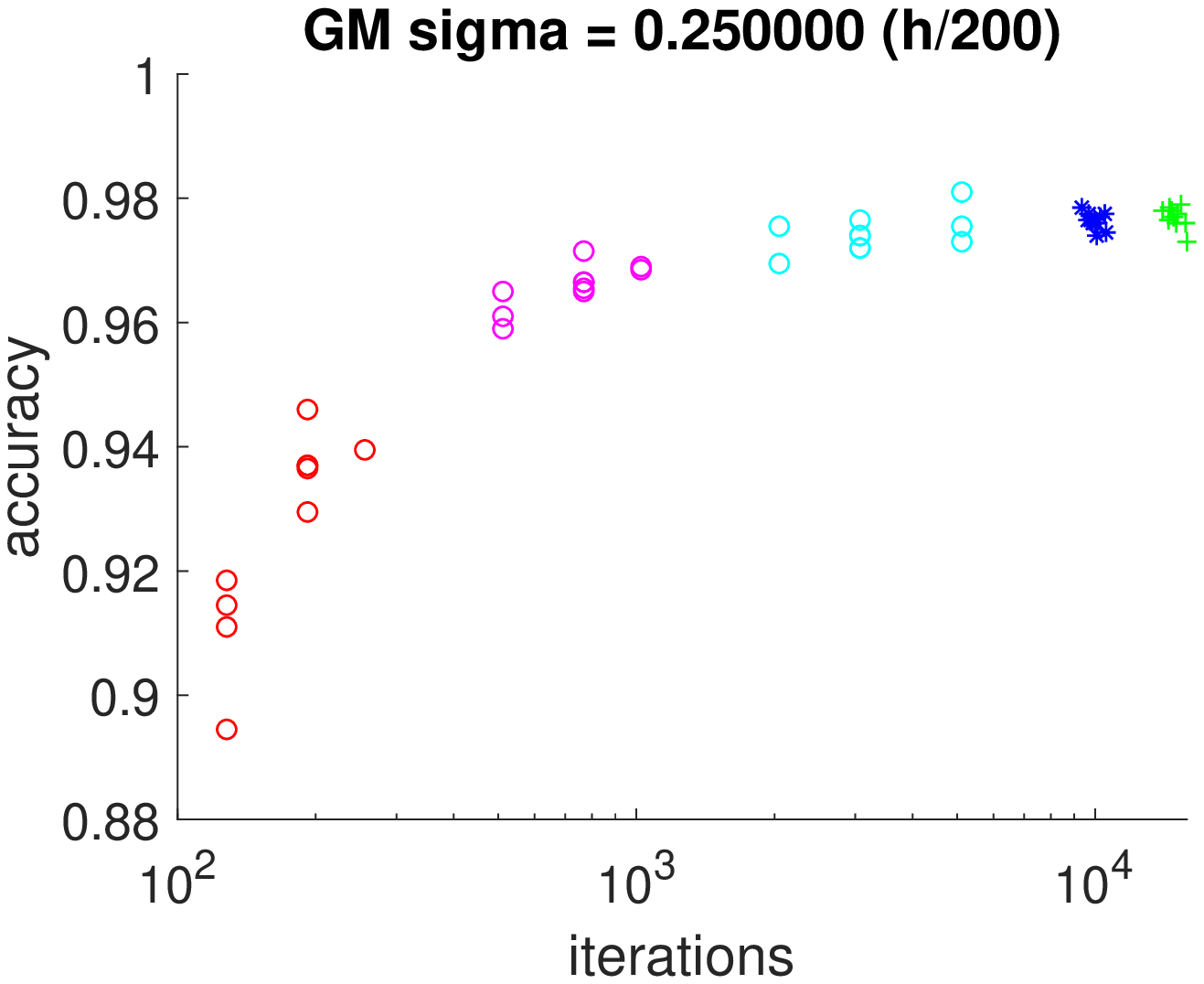} \\ \vspace{2 mm}
\includegraphics[height=2.5cm]{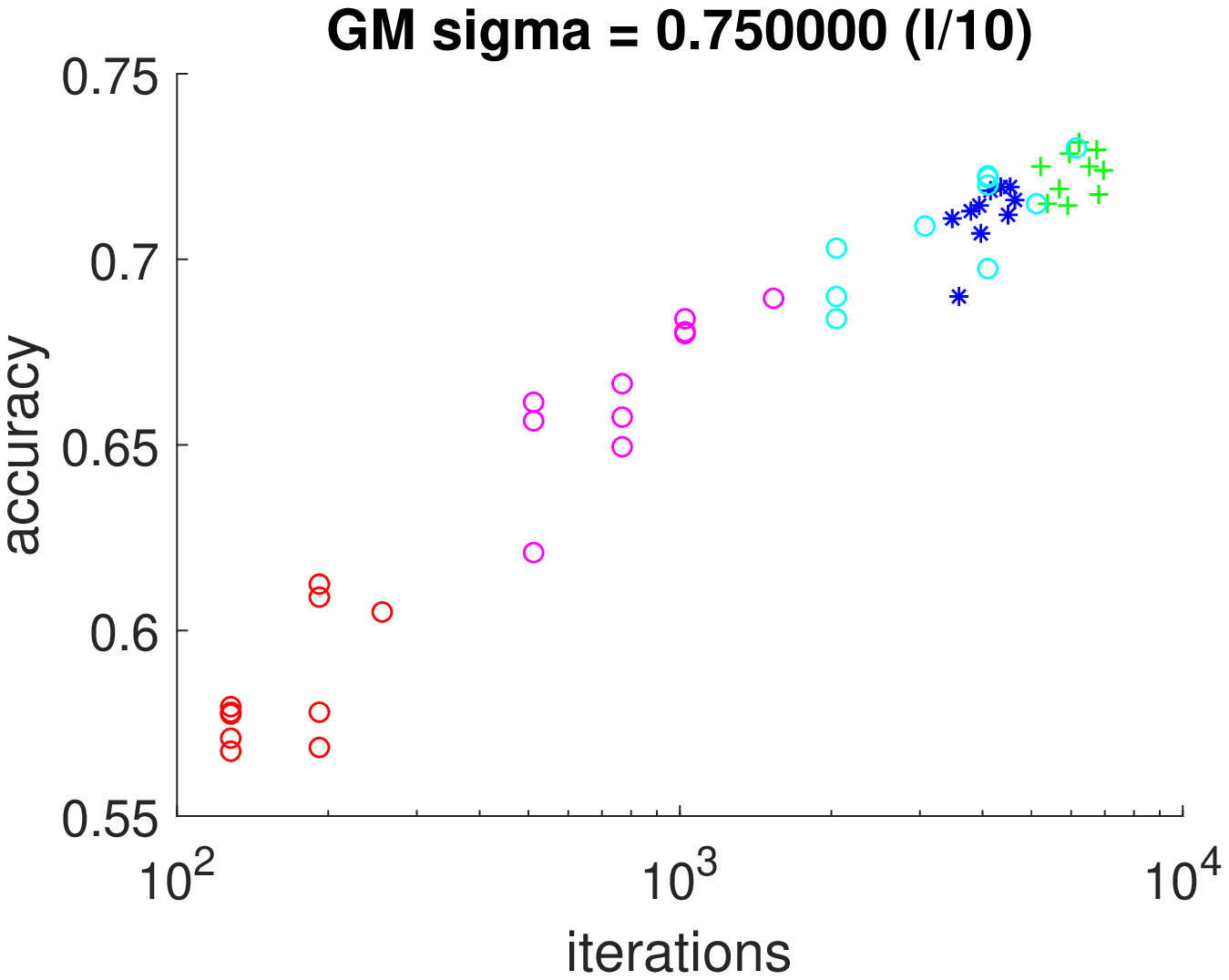}\qquad \includegraphics[height=2.5cm]{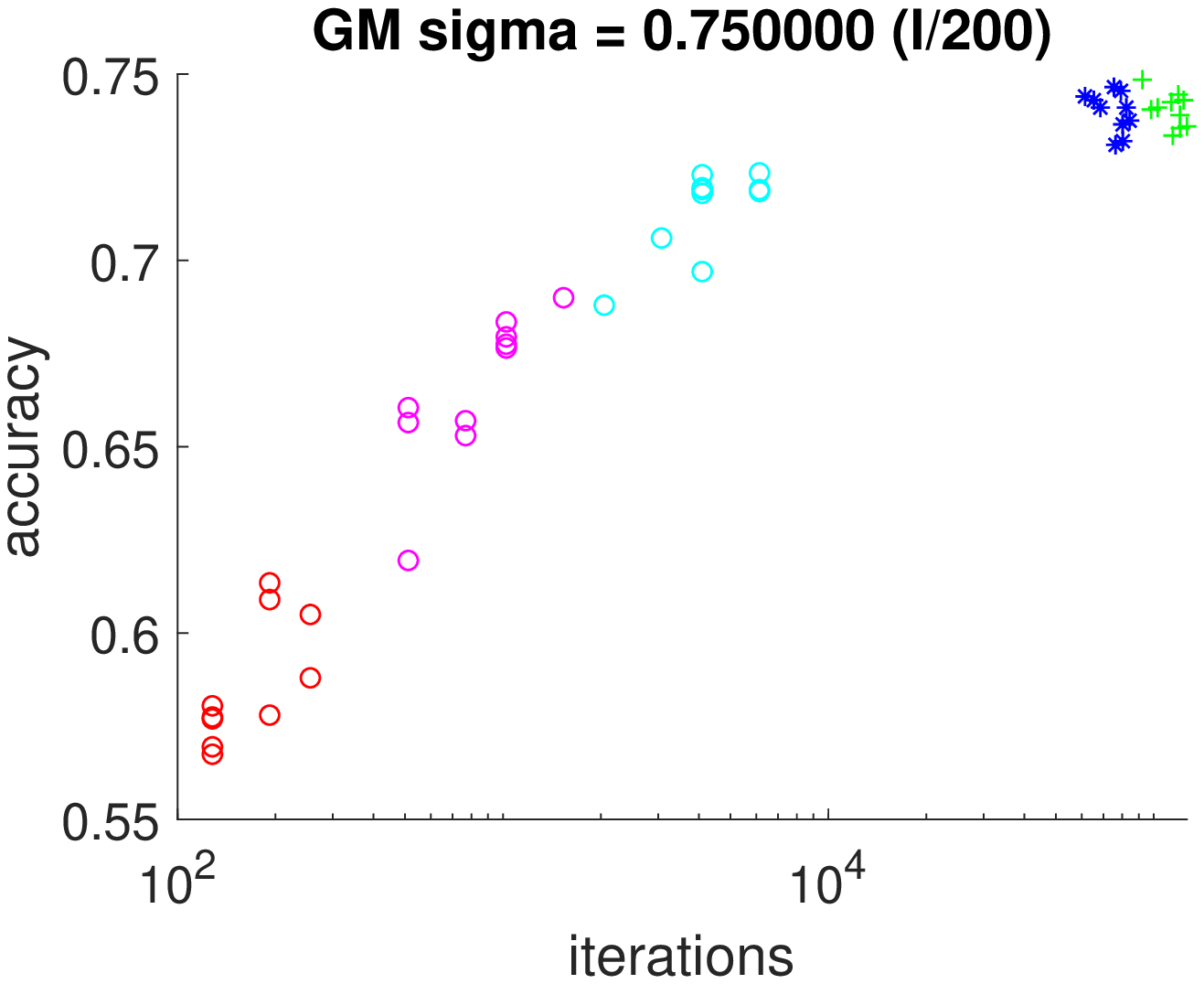} \qquad
\includegraphics[height=2.5cm]{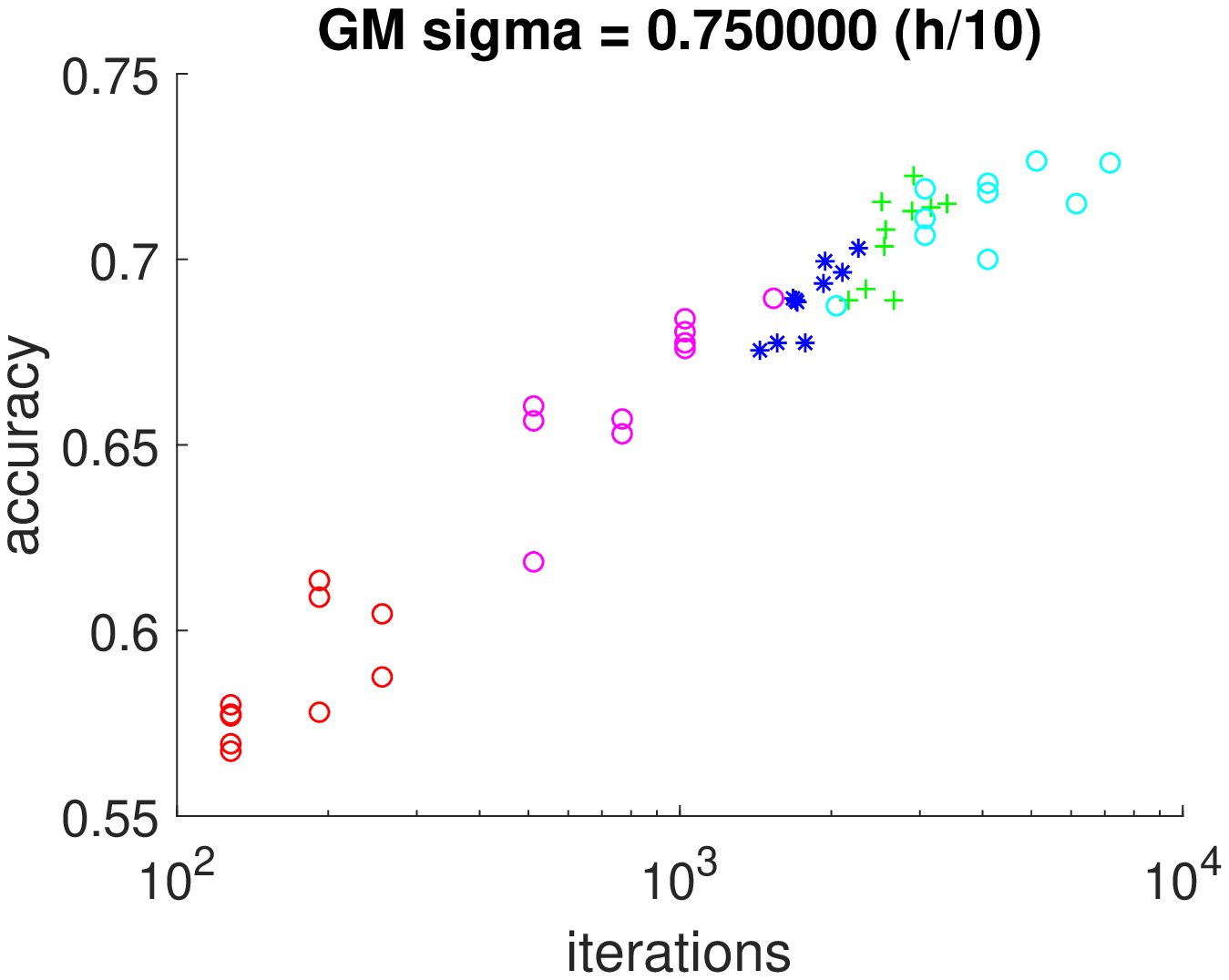}\qquad \includegraphics[height=2.5cm]{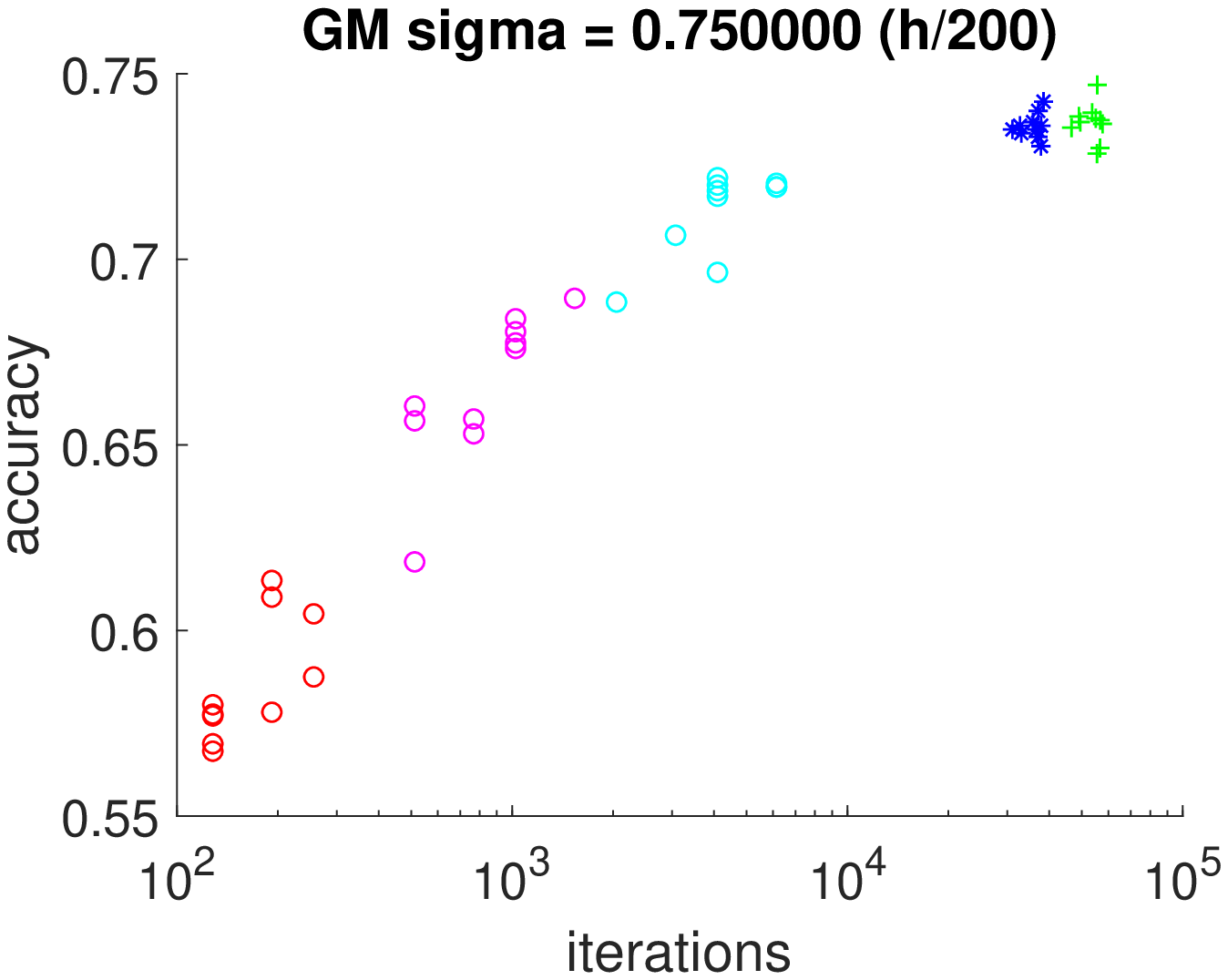} \\ \vspace{2 mm}
 \includegraphics[height=2.5cm]{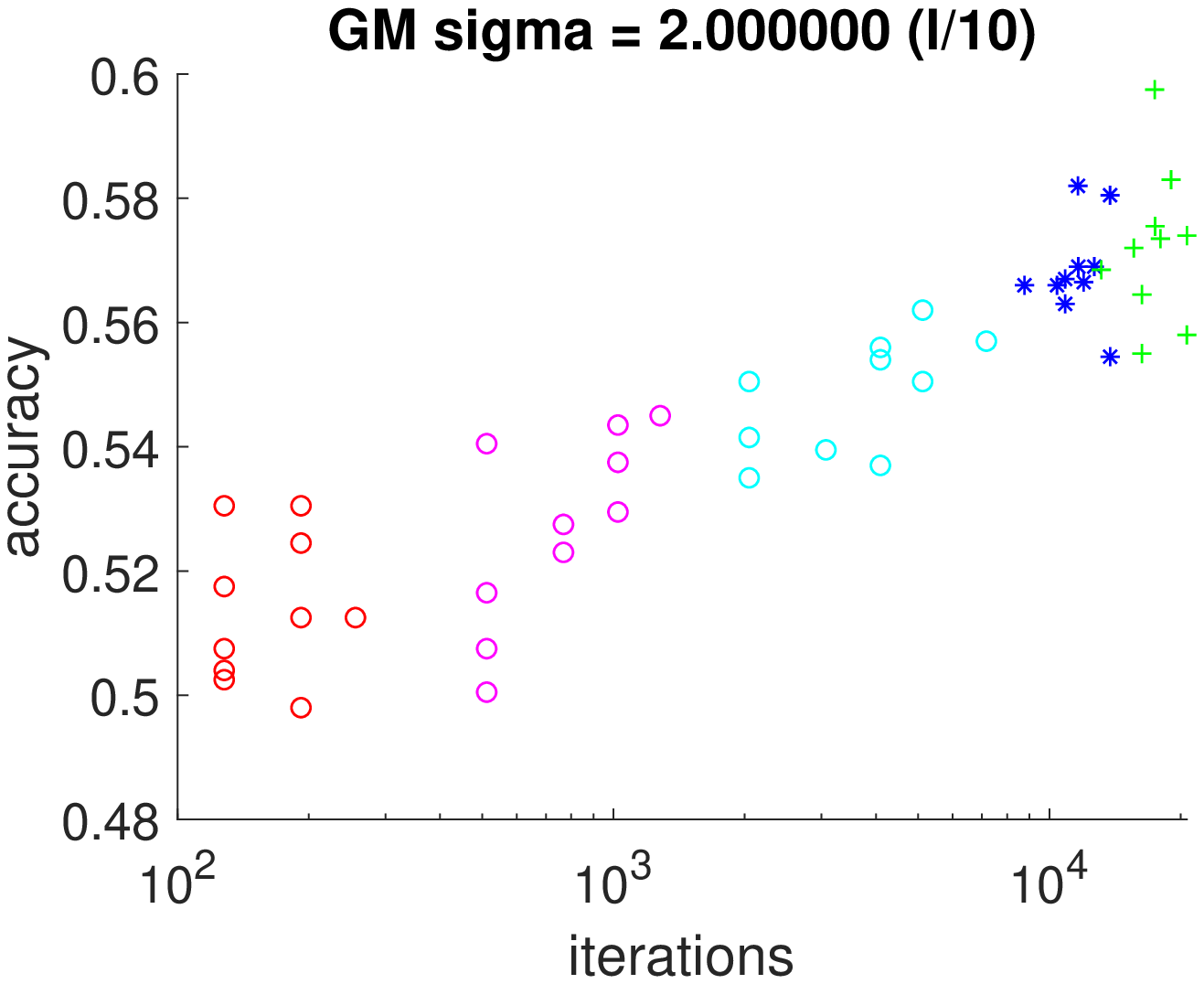}\qquad \includegraphics[height=2.5cm]{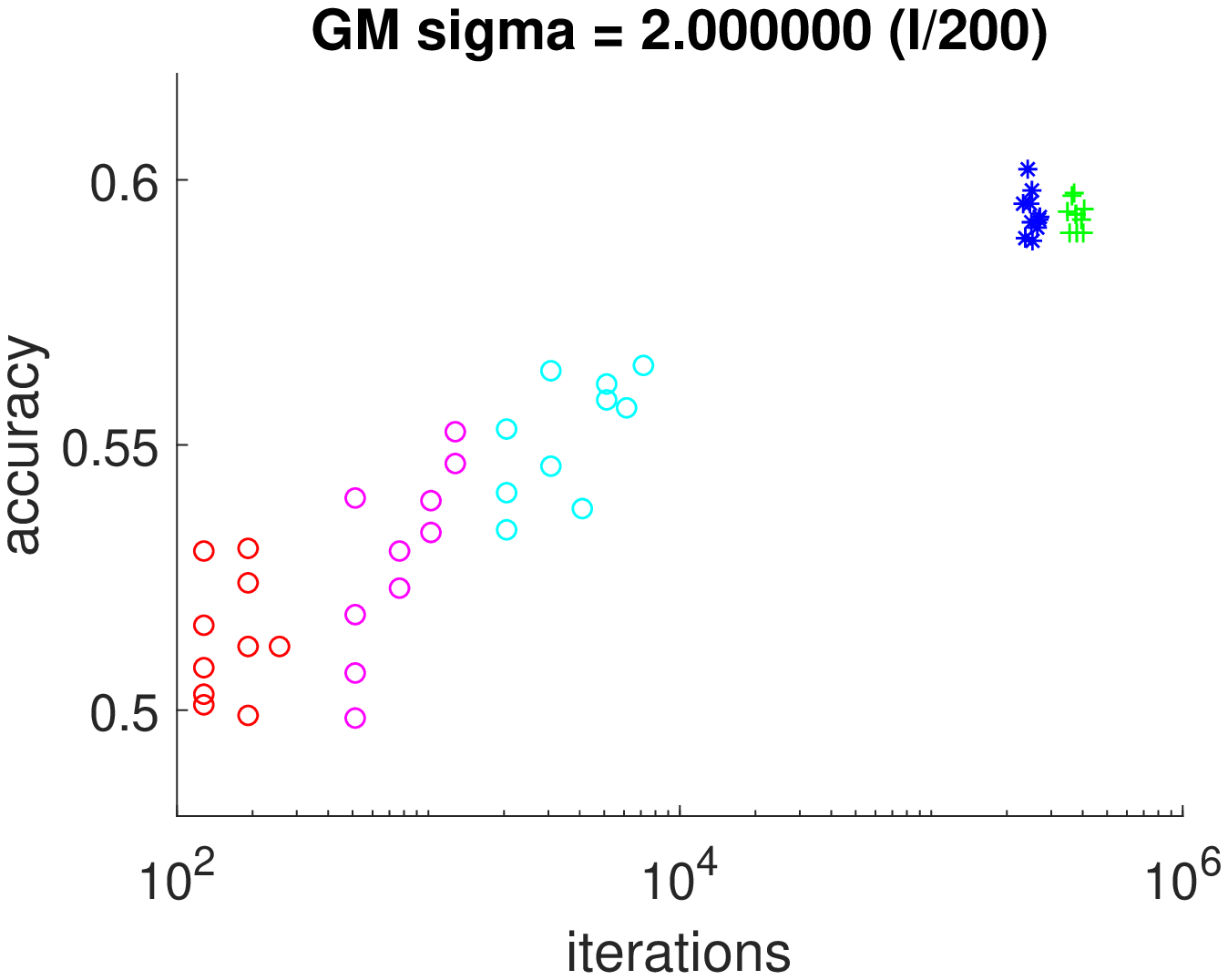} \qquad    \includegraphics[height=2.5cm]{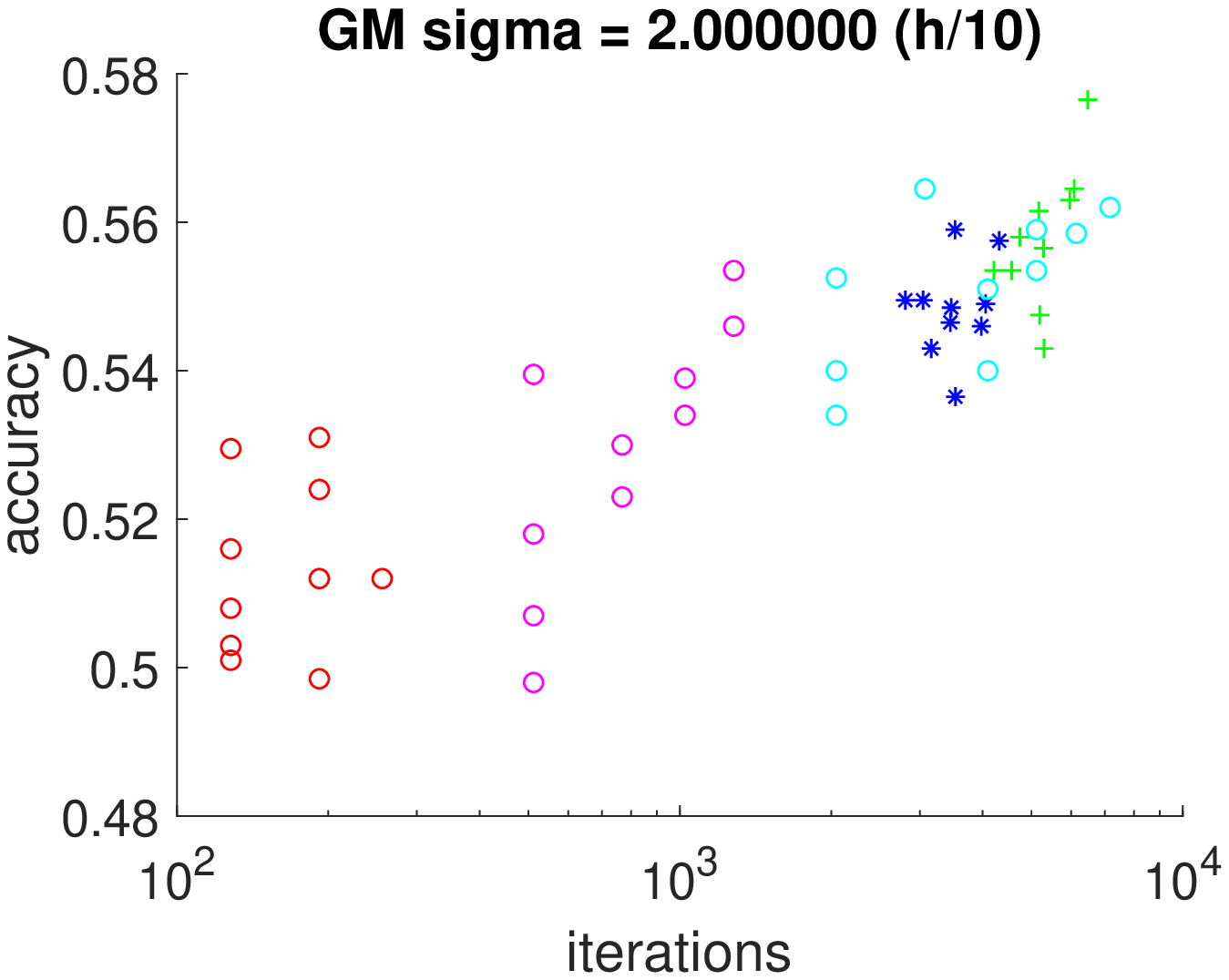}\qquad \includegraphics[height=2.5cm]{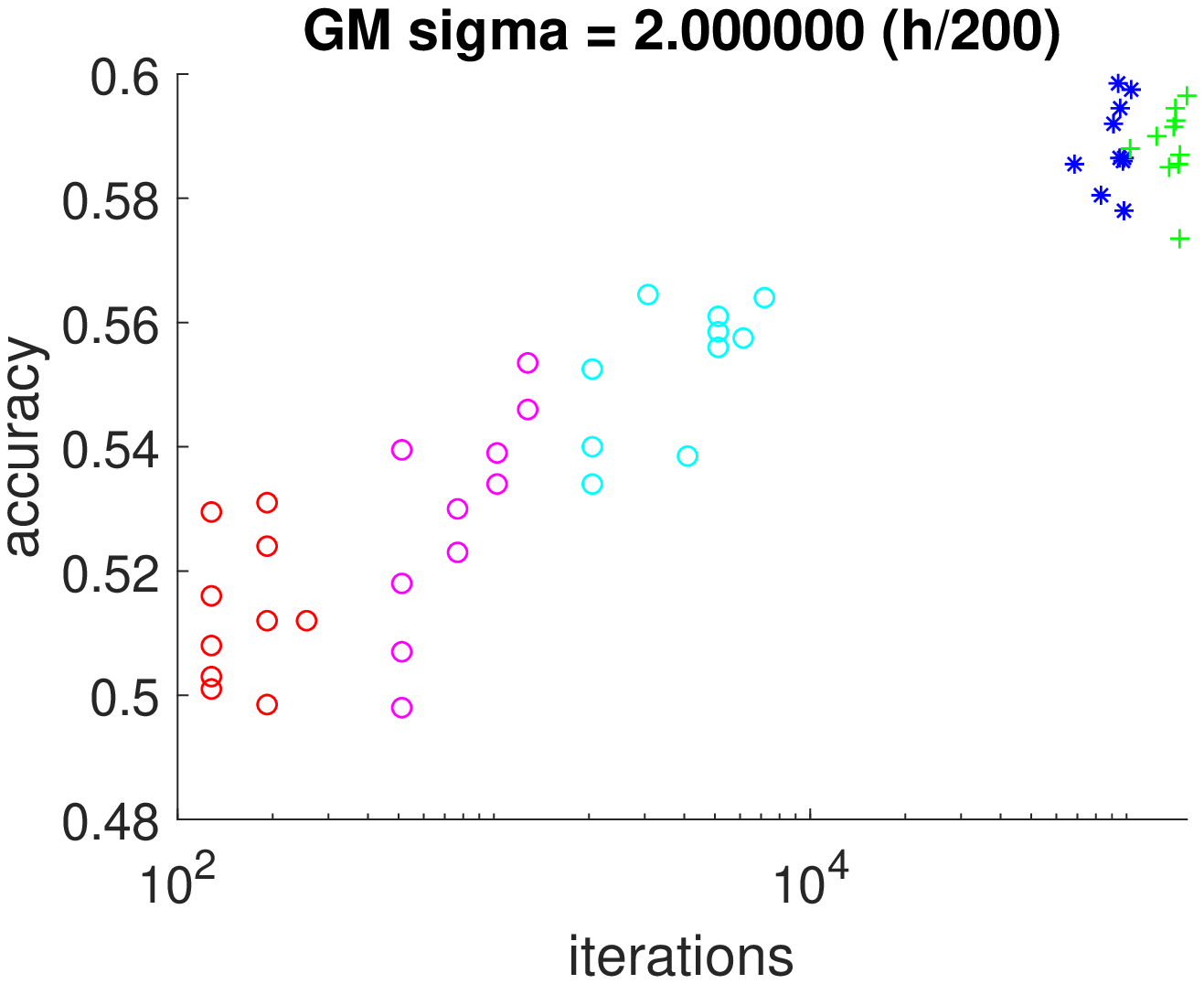}
 \caption{Each plot shows 10 random runs of SGD applied to normally distributed data with indicated values of $\sigma$ and for a fixed dimension $d=500$. For each of the ten runs, five termination tests corresponding to five colors were applied.  SVS was tried with $p=32, 128, 512$, depicted as red, magenta and cyan circles respectively.  Test \eqref{eq: practical_termination_test} is indicated with a blue asterisk. A green `+' corresponds to termination after $1.5k$ iterations, where $k$ is the iteration index that \eqref{eq: practical_termination_test} first holds.  The notation $(l/200)$ means logistic loss with $\tilde\alpha=1/200$; simillarly $(h/10)$ means hinge loss with $\tilde\alpha = 1/10$, and so on.}
\label{fig:normal_scatter}
\end{figure*}

\paragraph{Normal distribution.} We generated test and training data using a mixture of Gaussians given by $N(\bz,\sigma^2I)$ for the 0-class and $N(\bm{e}_1,\sigma^2I)$ for the 1-class, where $\bm{e}_1 = (1,0,\hdots, 0)^T \in \R^d$.  

In Fig.~\ref{fig:acctime}, we present the running time and accuracy (fraction correct) of our termination test for a fixed dimension $d=500$ and $\sigma$ ranging from $0.05$ to $2$. We record 10 runs for each value of $\sigma$. The performance of the classifier when our termination test \eqref{eq: practical_termination_test} holds almost matches the optimal classifier; in particular, the averaged accuracy of our classifier/accuracy of the optimal classifier over the 10 runs, black curve in Fig.~\ref{fig:black}, never dips below $0.95$.

In Fig.~\ref{fig:normal_scatter}, we compare performance of \eqref{eq: practical_termination_test} against SVS termination. One axis shows accuracy while the other shows iteration count. We continued to run SGD for an additional $1.5k$ iterations where $k$ is the first iteration at which \eqref{eq: practical_termination_test} holds (green '+') to test whether accuracy improves after termination.
The tests (for several values of $\sigma$, both hinge and logistic, and two values of $\tilde\alpha$) in Fig.~\ref{fig:normal_scatter} indicate that \eqref{eq: practical_termination_test} is more accurate than SVS, more predictable (i.e., there is less spread in the scatter plot), and that running until $1.5k$ iterations does not significantly improve the solution.  As expected,
for a large $\tilde\alpha$, \eqref{eq: practical_termination_test} requires fewer iterations than SVS with $p=512$, while the opposite relationship holds for a small $\tilde\alpha.$

\begin{figure}
    \begin{center}
    \includegraphics[height=2.5cm]{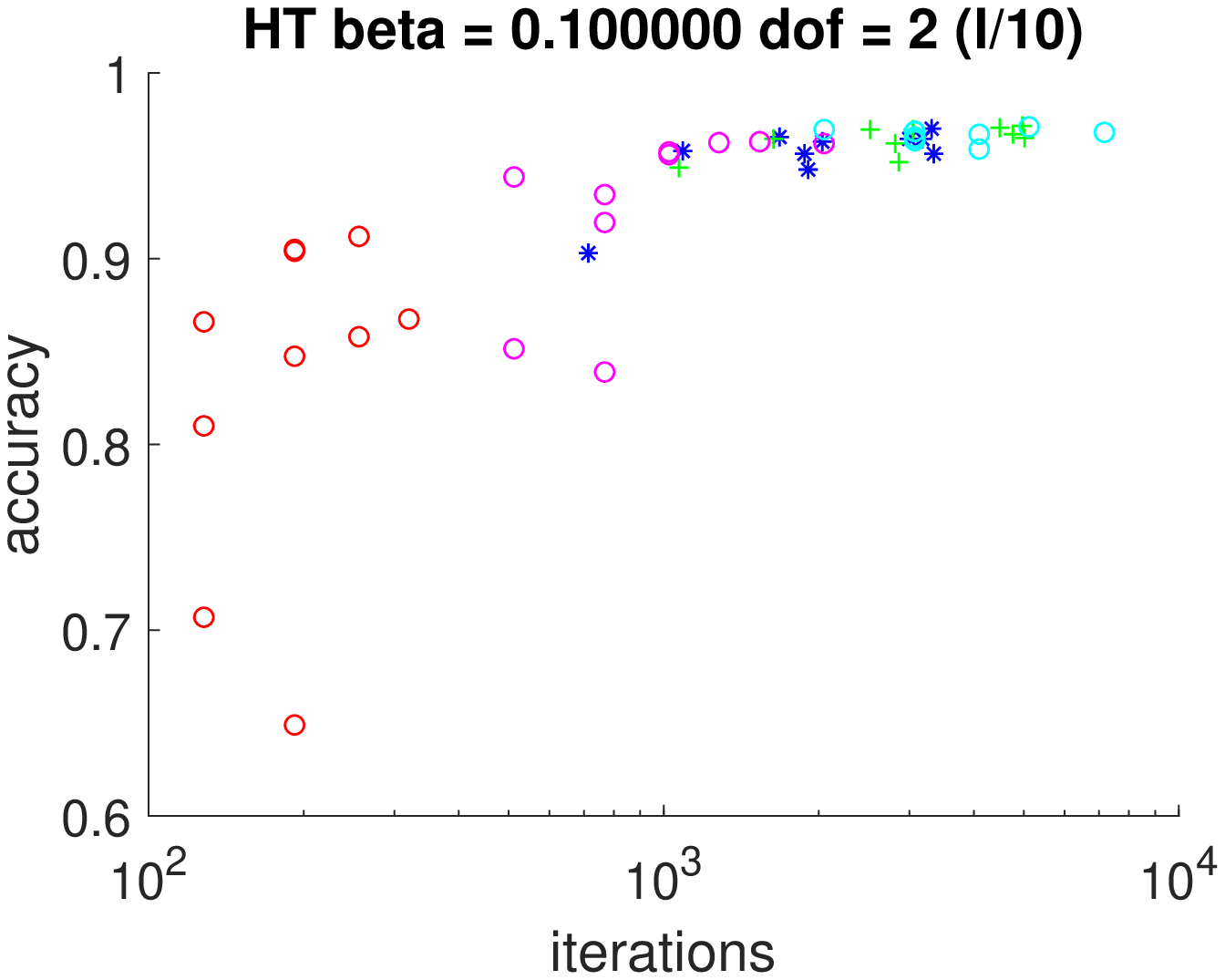} \qquad \includegraphics[height=2.5cm]{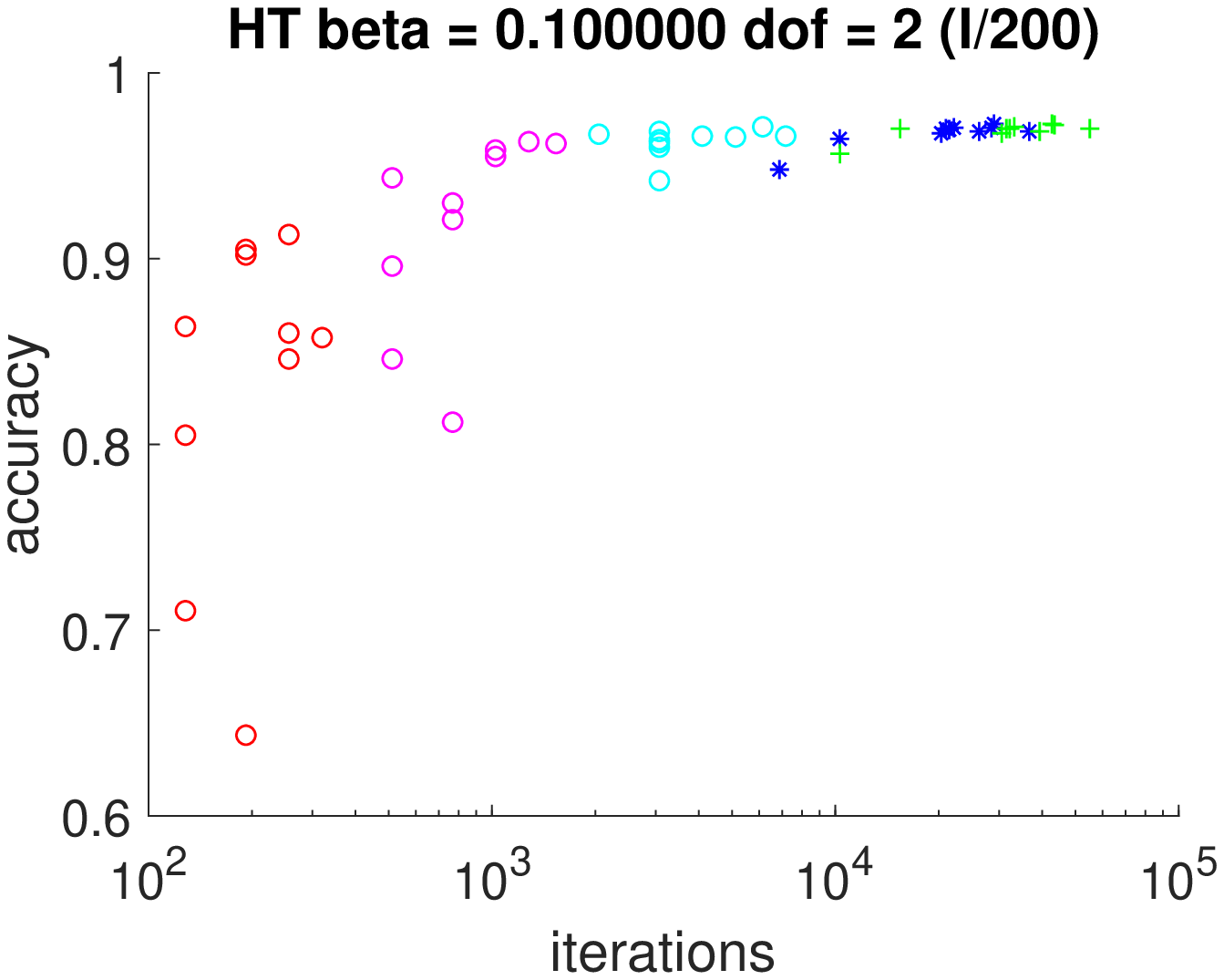} \qquad \includegraphics[height=2.5cm]{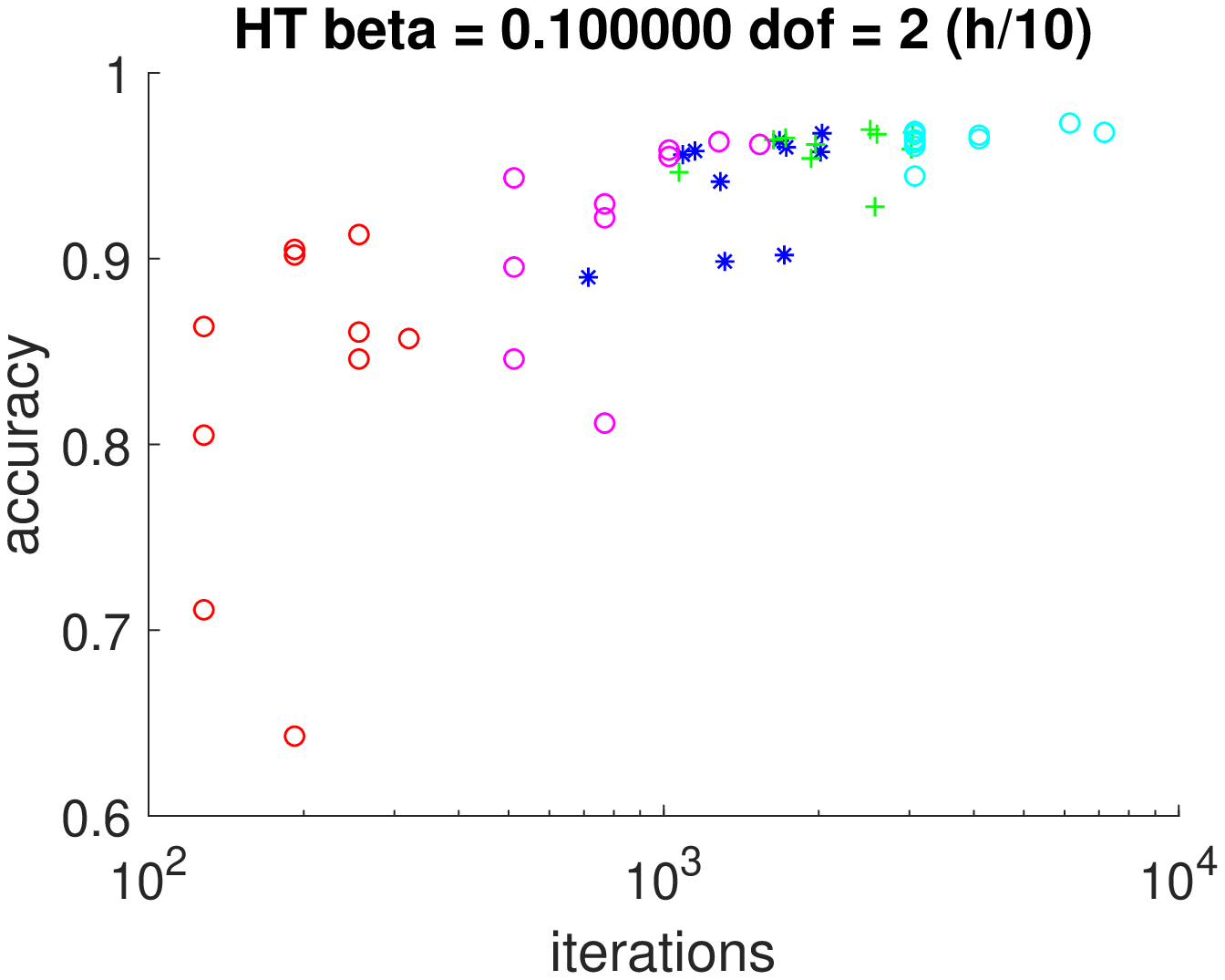} \qquad  
 \includegraphics[height=2.5cm]{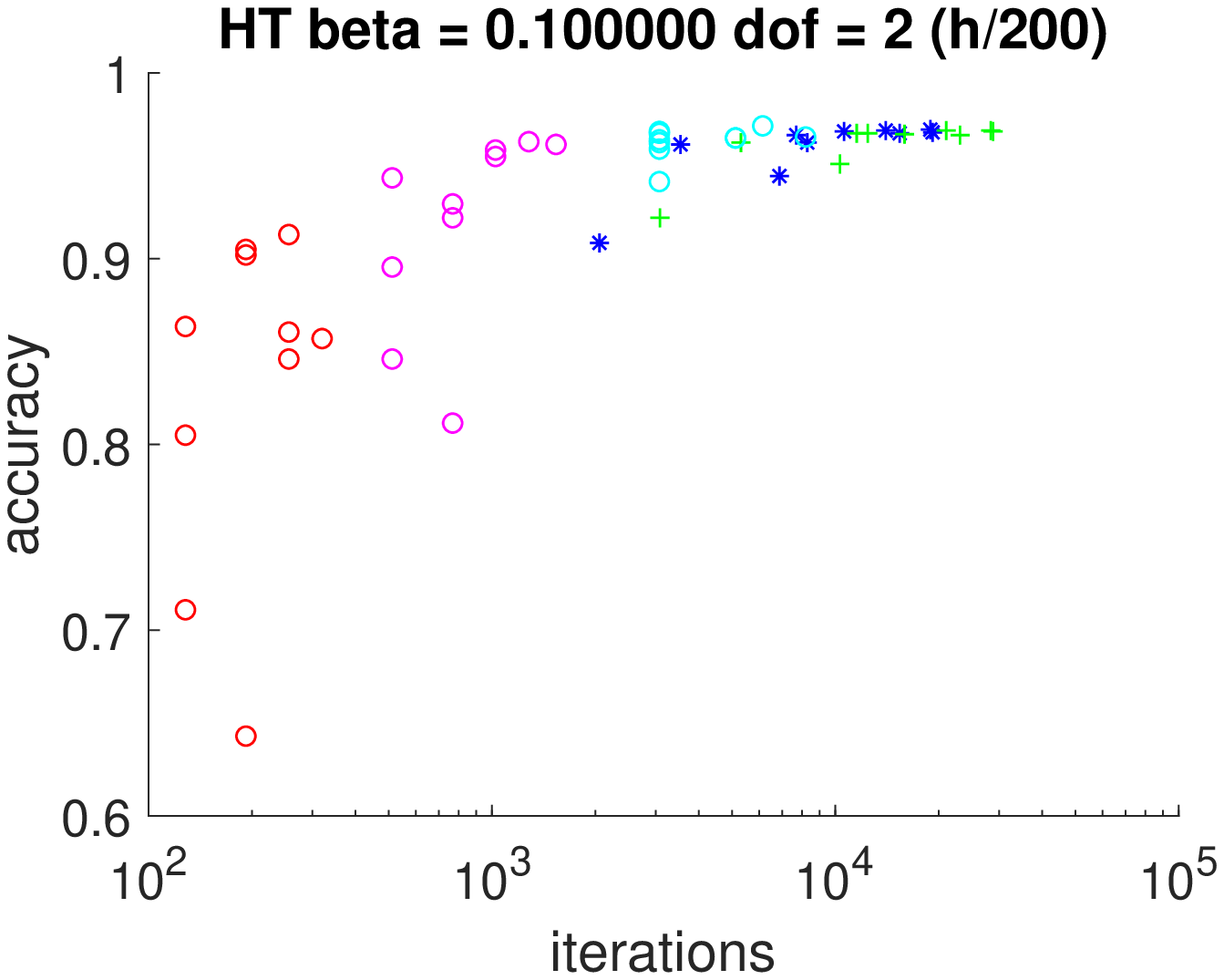} \qquad       \includegraphics[height=2.5cm]{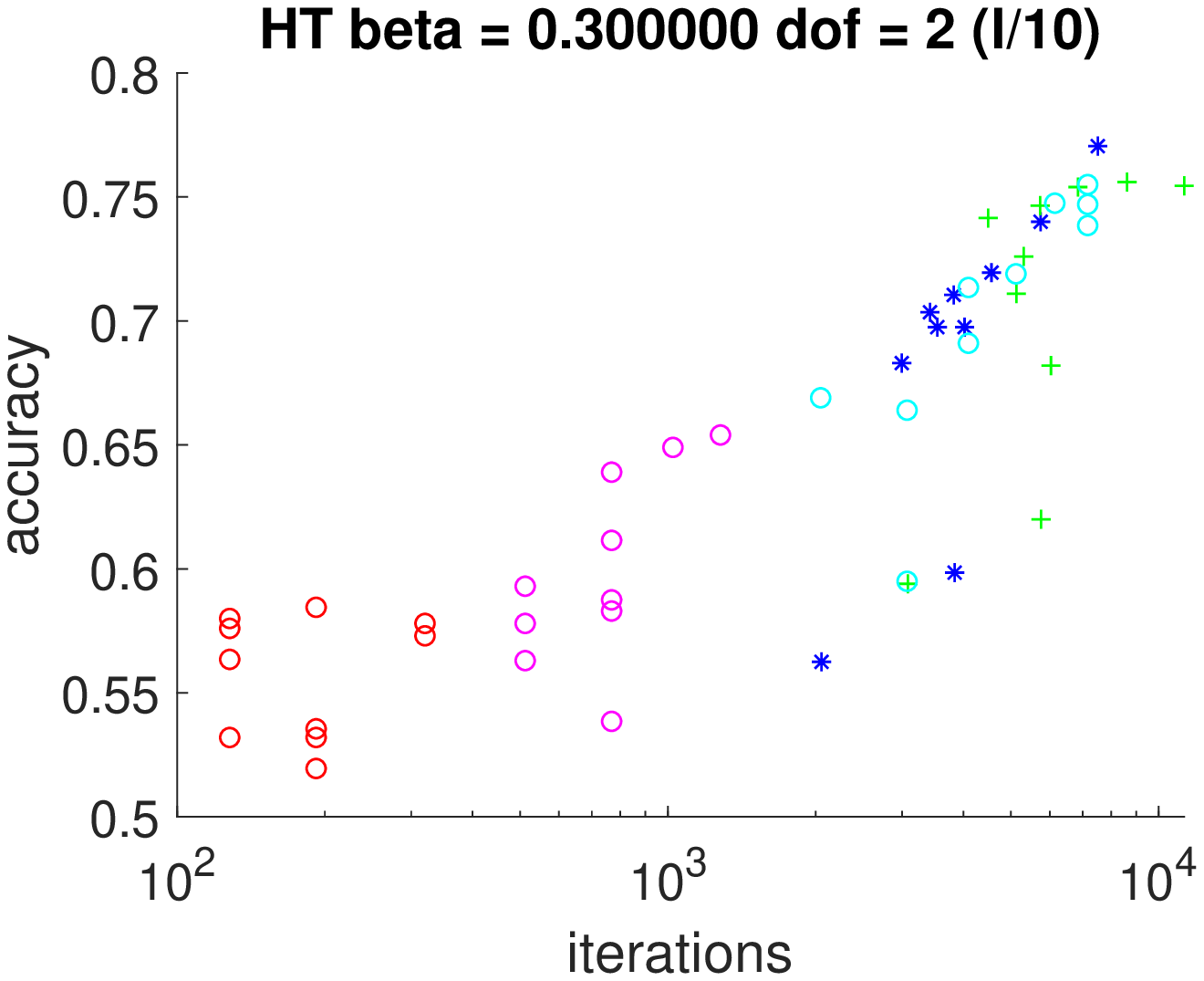} \qquad \includegraphics[height=2.5cm]{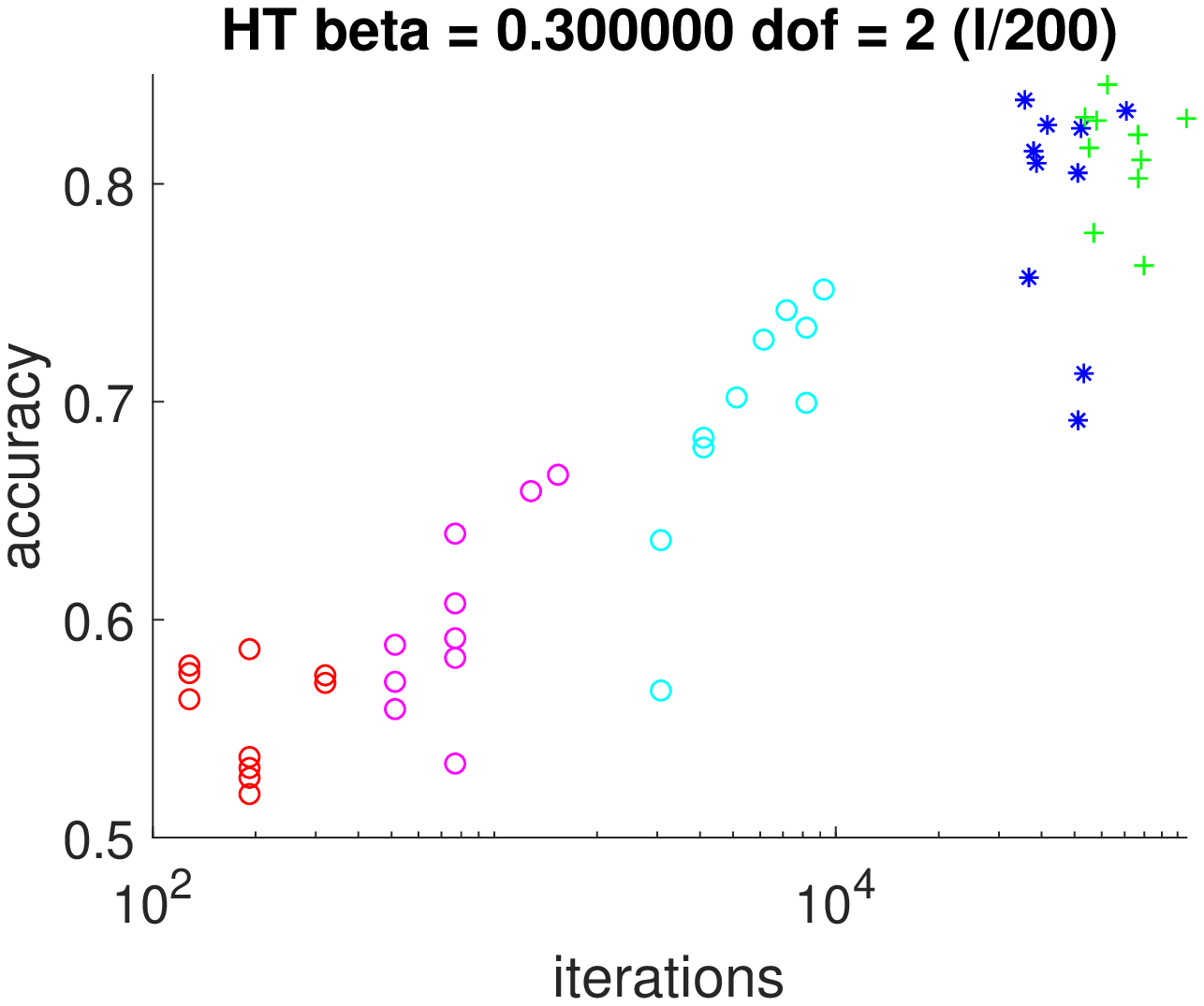} \qquad \includegraphics[height=2.5cm]{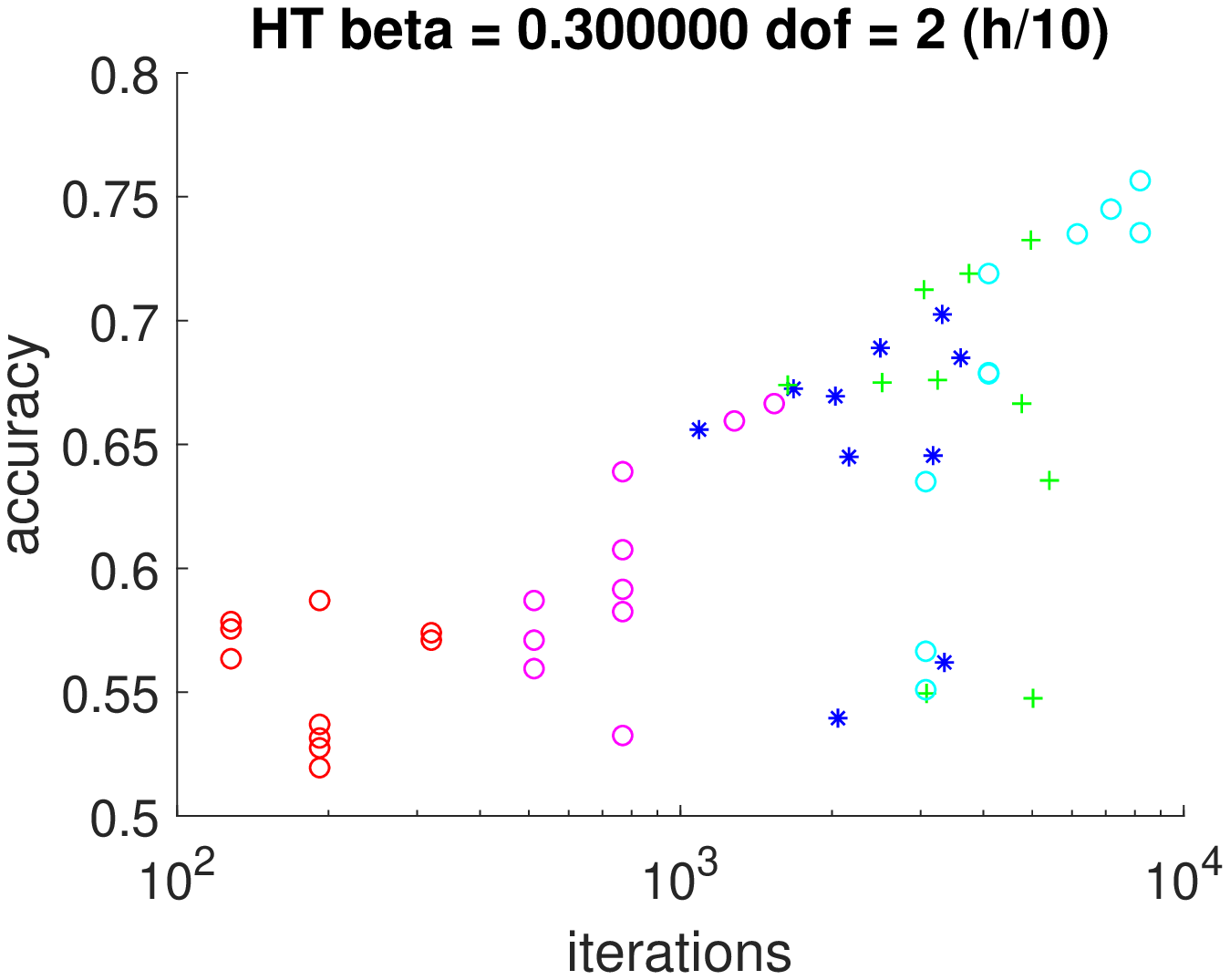} \qquad \includegraphics[height=2.5cm]{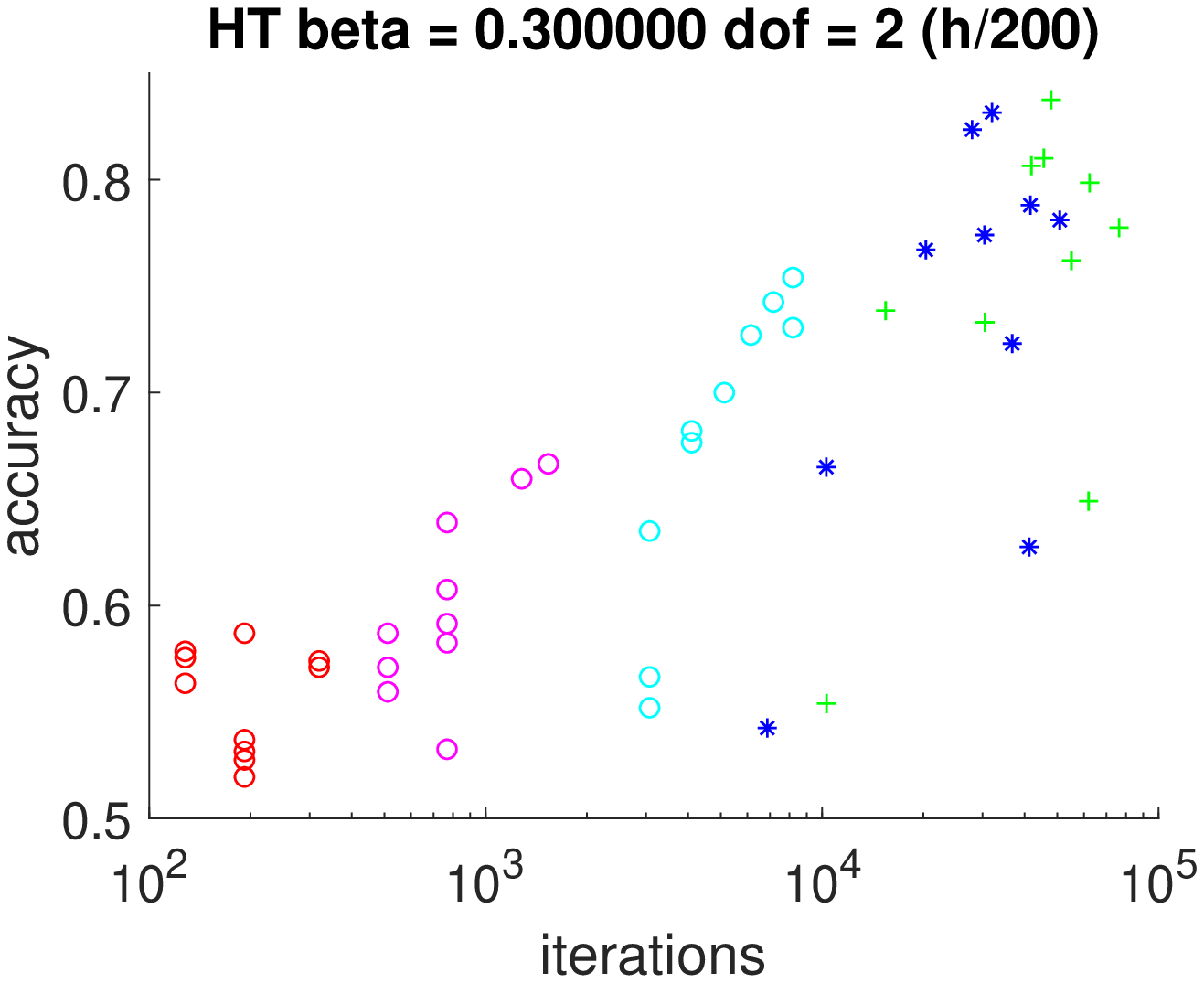} \\ \vspace{2 mm}
    \includegraphics[height=2.5cm]{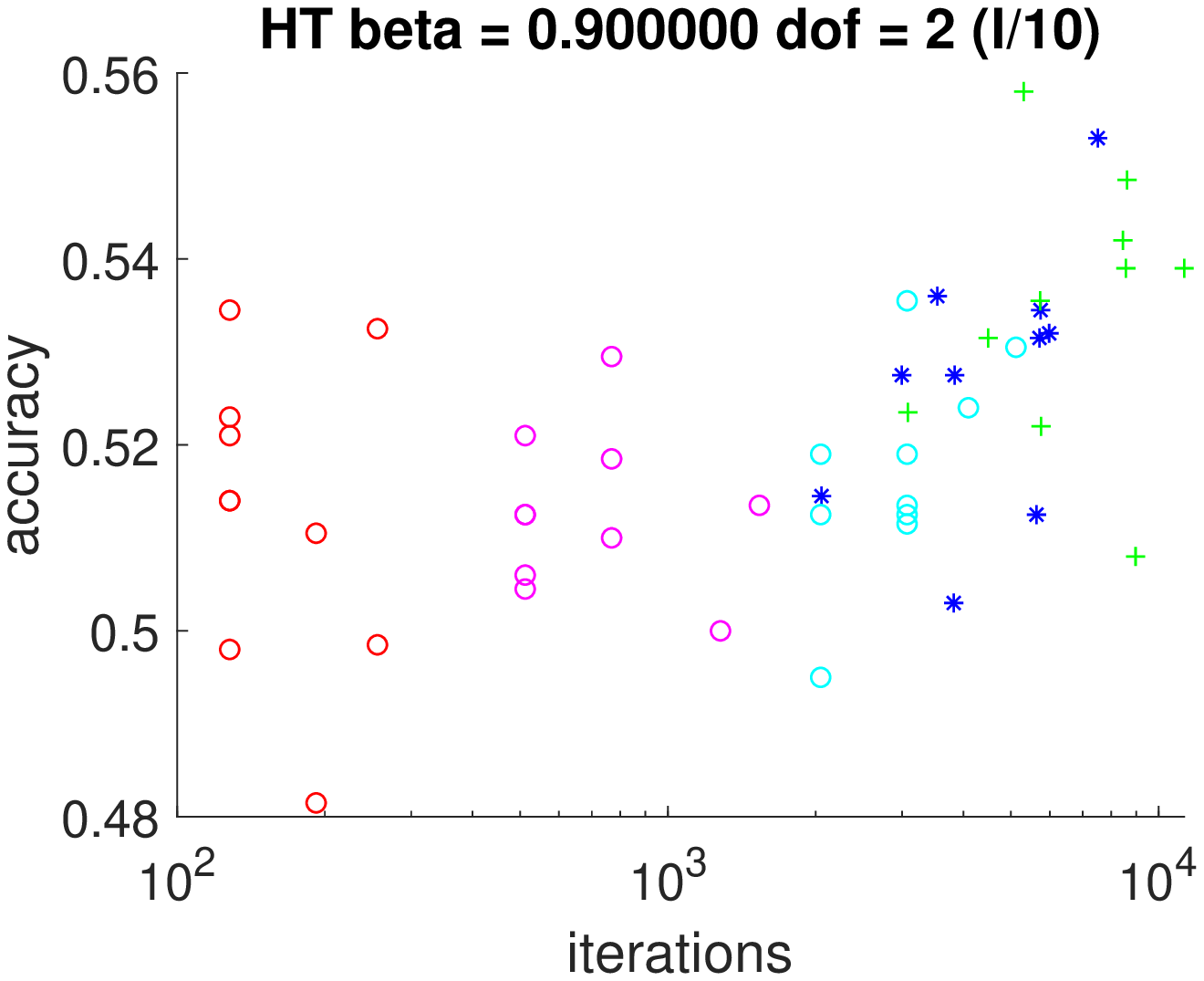} \qquad \includegraphics[height=2.5cm]{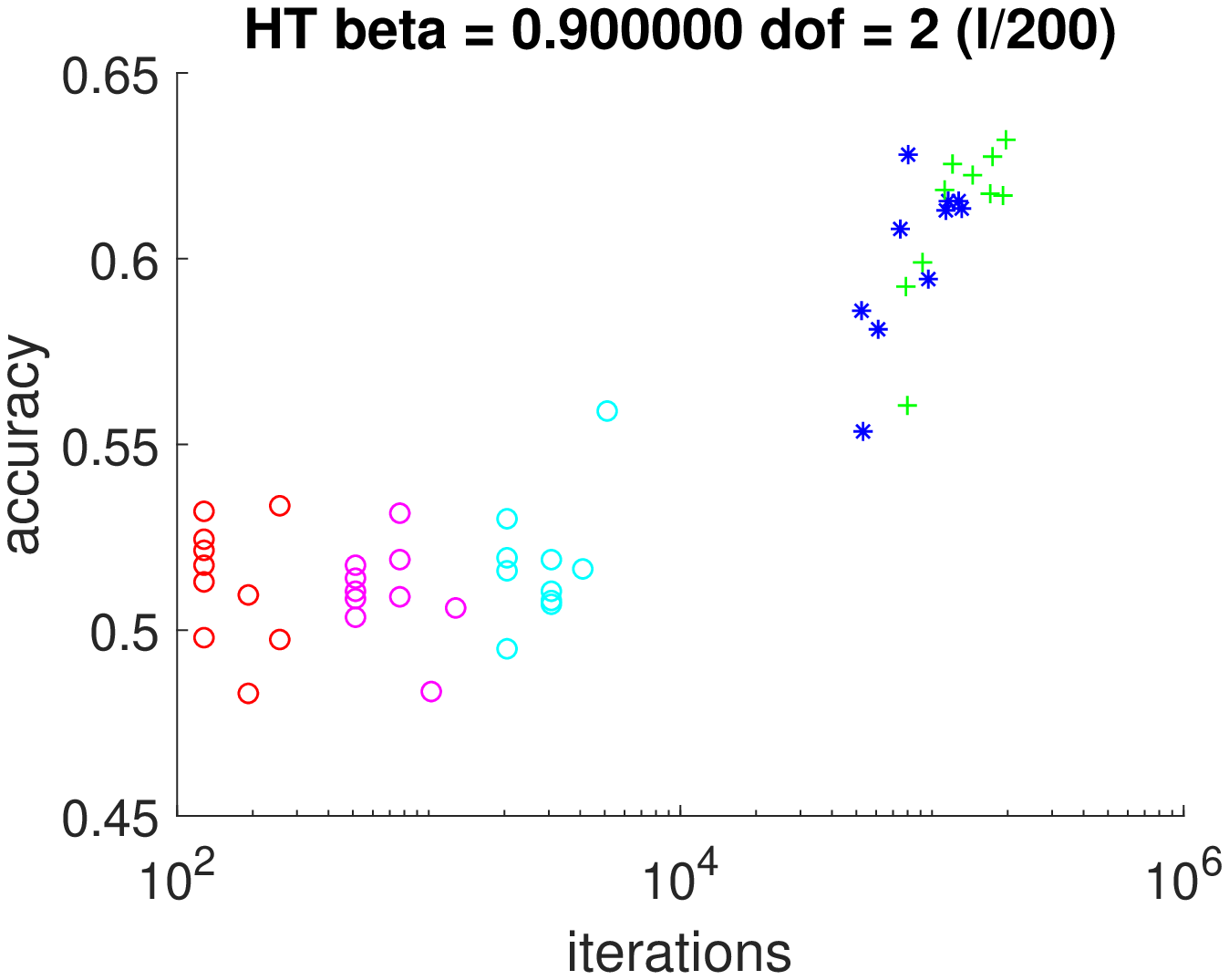} \qquad \includegraphics[height=2.5cm]{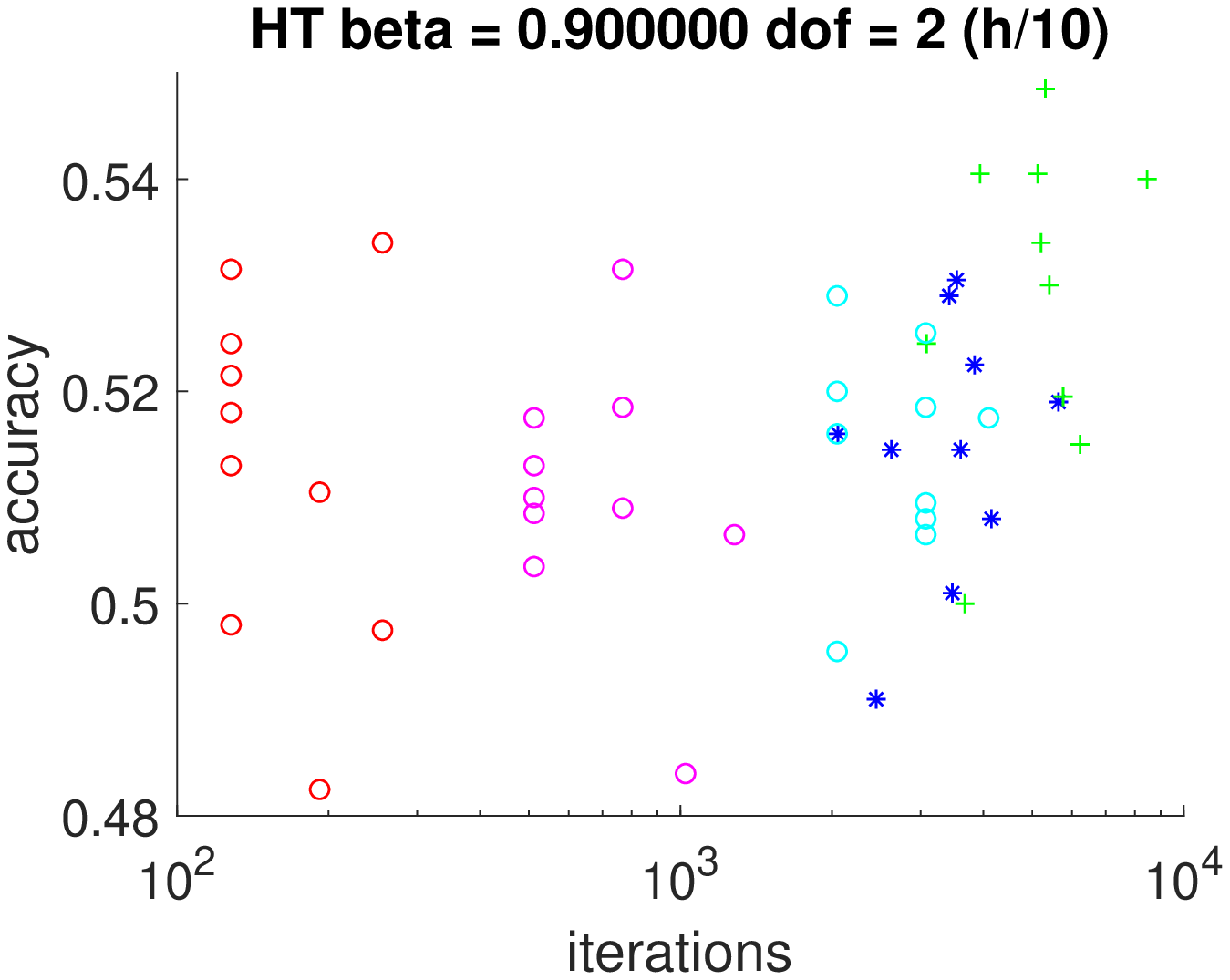} \qquad \includegraphics[height=2.5cm]{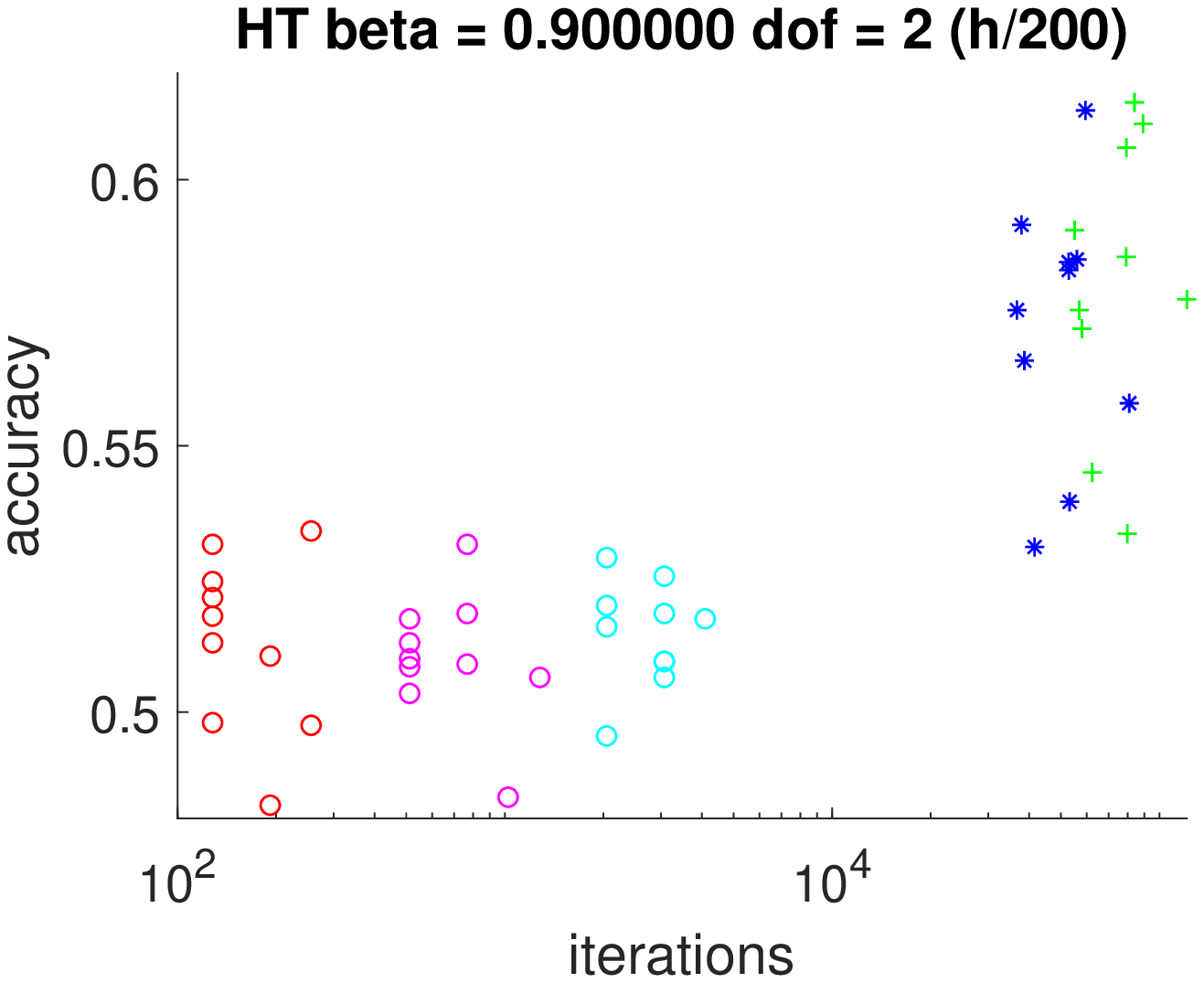} \\
    \end{center}
        \caption{Tests on the student-t distribution (heavy tailed) with two degrees of freedom and the indicated value of parameter $\beta$.  See the caption of Fig.~\ref{fig:normal_scatter} for explanation of the plots.}
    \label{fig:ht_scatter}
\end{figure}

\paragraph{Heavy-tailed distribution.}
We consider the student t-distribution with two degrees of freedom.  This
distribution is heavy-tailed since some of its
higher moments are infinite.  

The two classes were generated as follows.  For $\bzeta$ in the 0-class, each of the $d$ entries of $\bzeta$ is chosen as
$\beta\eta$, where $\beta$ is varied in the experiments and
$\eta$ is drawn from the student t-distribution with two degrees of freedom.  For the 1-class, $\bzeta$ is chosen in the same way except that the first entry is incremented by 1. Fig.~\ref{fig:ht_scatter} shows our performance against SVS.
The results in this table show similar trends as in the normally distributed case.  One difference is that the accuracy achieved by our termination test \eqref{eq: practical_termination_test} is more spread out presumably because of the heavy-tailed nature of the data set.

\subsection{Experiments with real data} \label{sec:comp-realistic}

\paragraph{MNIST handwritten digits.} We compared our termination test on the MNIST handwritten digit set \citep{MNIST} ($d=784$, no preprocessing of the data other than centering between the two means).  Two trials are shown: distinguishing 1 from 8 (easy case) and distinguishing 7 from 9 (more difficult case).  The test runs are obtained by running through the training data in different randomized orders.  The plots in Fig.~\ref{fig:mnist_scatter} show similar trends as before.  As expected, the accuracy is overall higher for 
$\tilde\alpha=1/200$ than for $\tilde\alpha=1/10$.

\begin{figure}
    \centering
    \includegraphics[height=2.5cm]{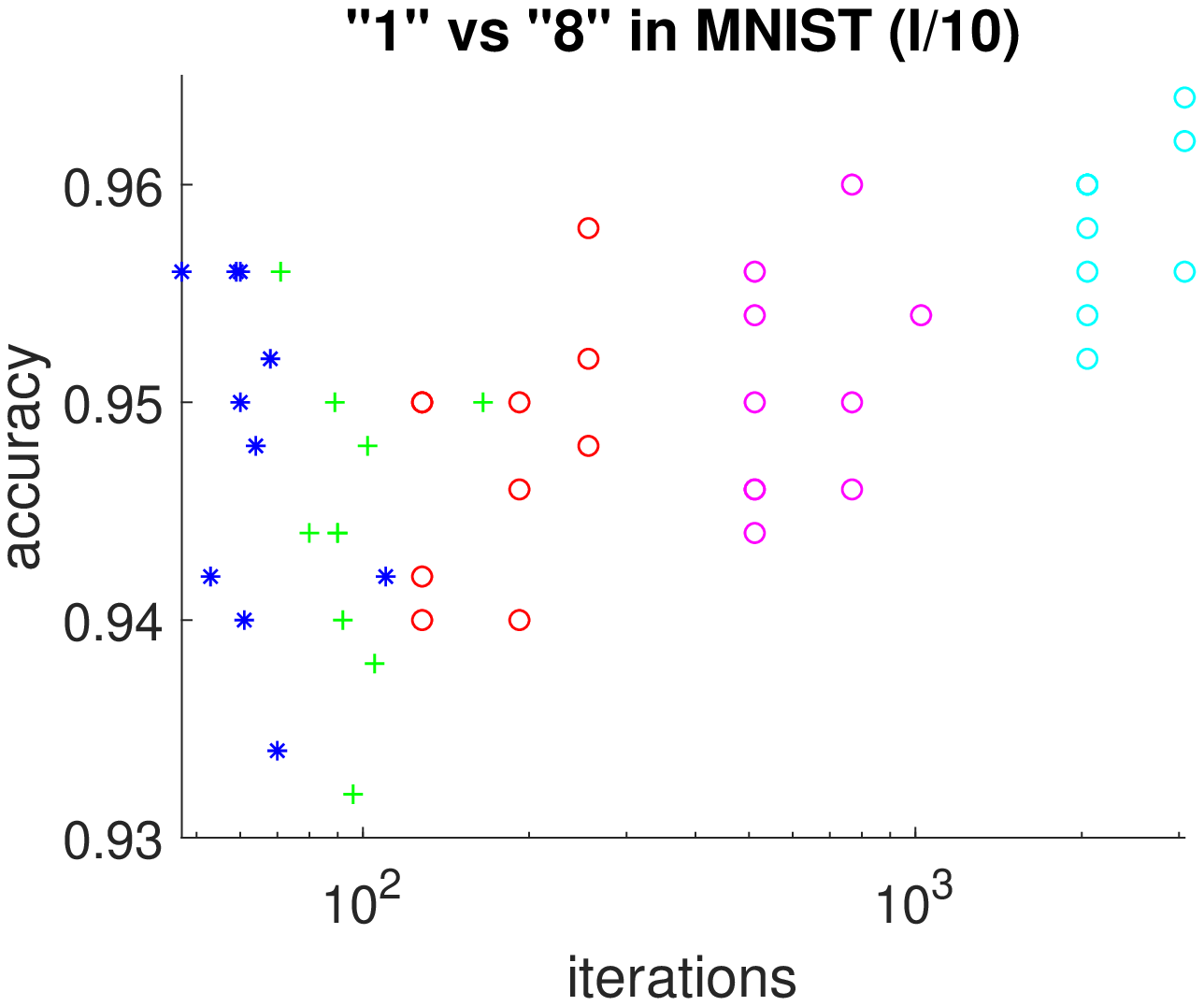} \qquad     \includegraphics[height=2.5cm]{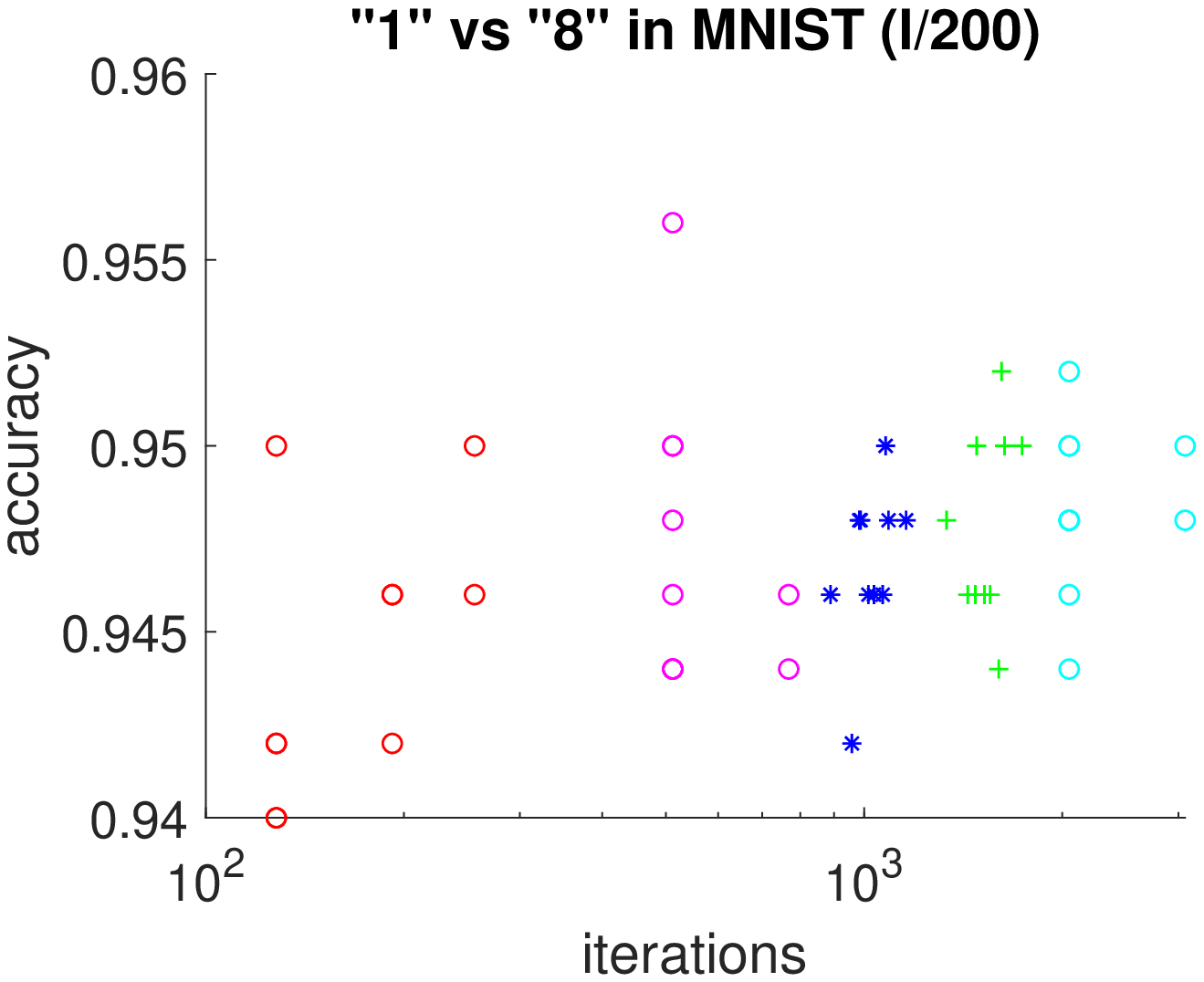} \qquad  \includegraphics[height=2.5cm]{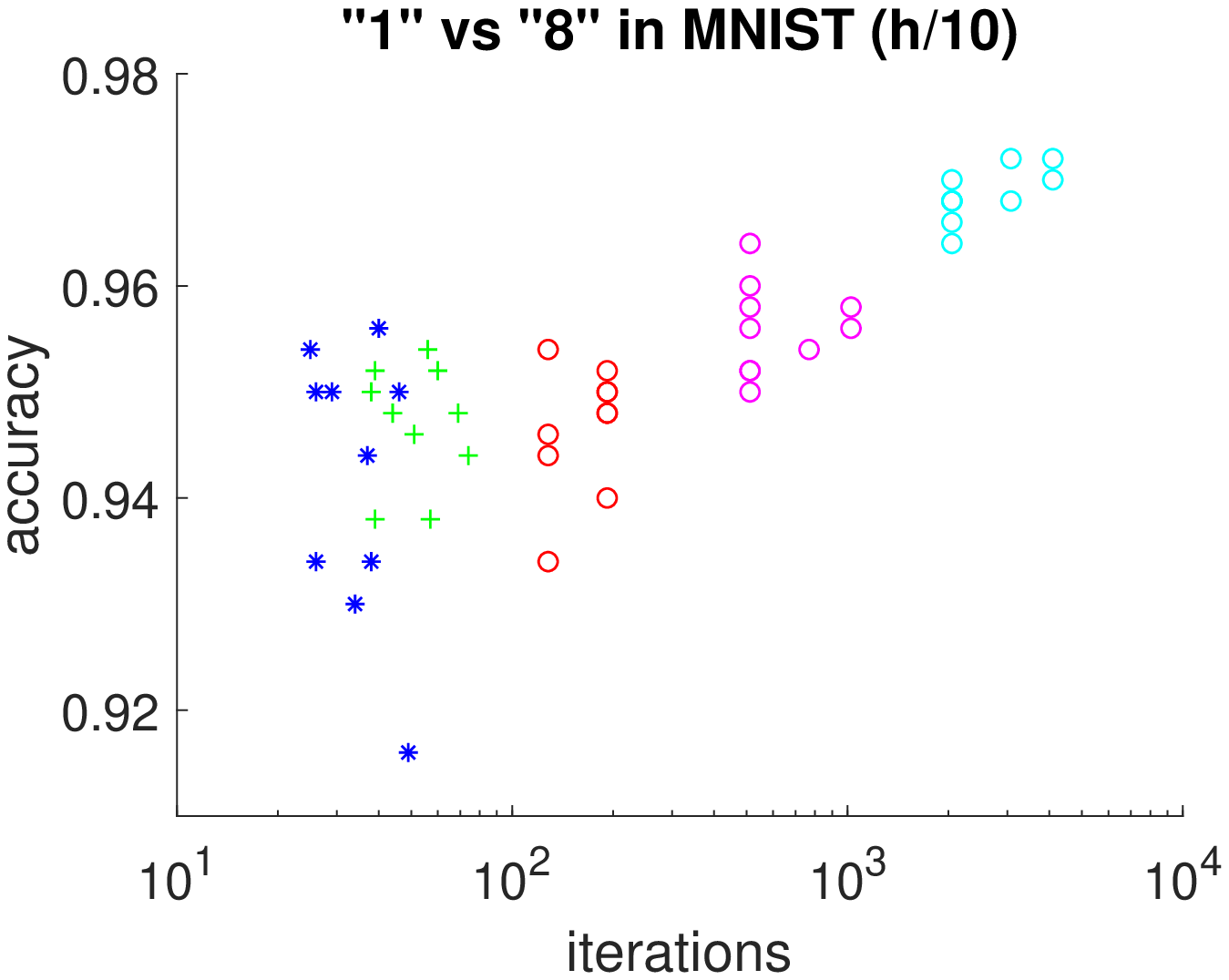} \qquad     \includegraphics[height=2.5cm]{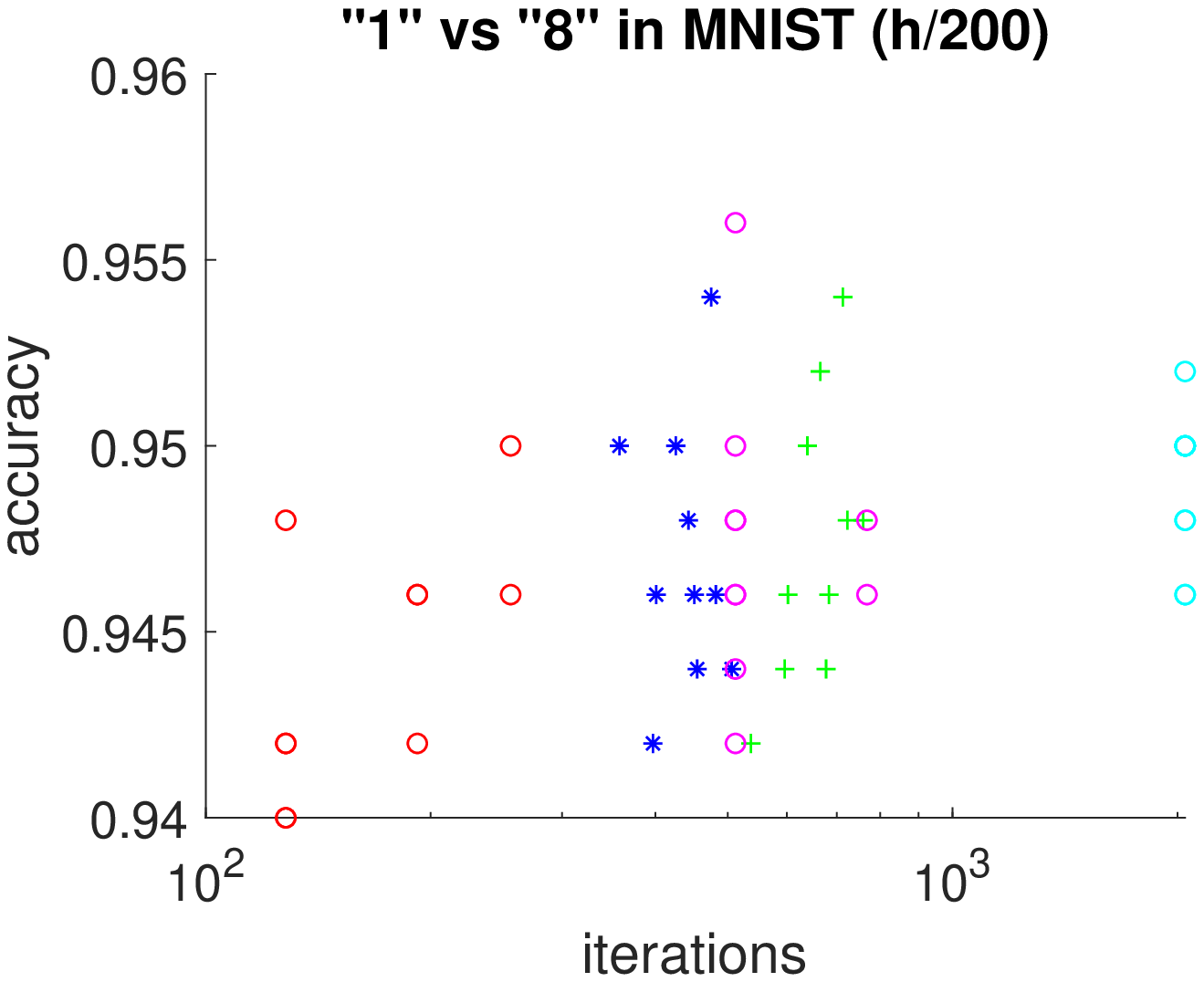} \\  \vspace{2 mm} \includegraphics[height=2.5cm]{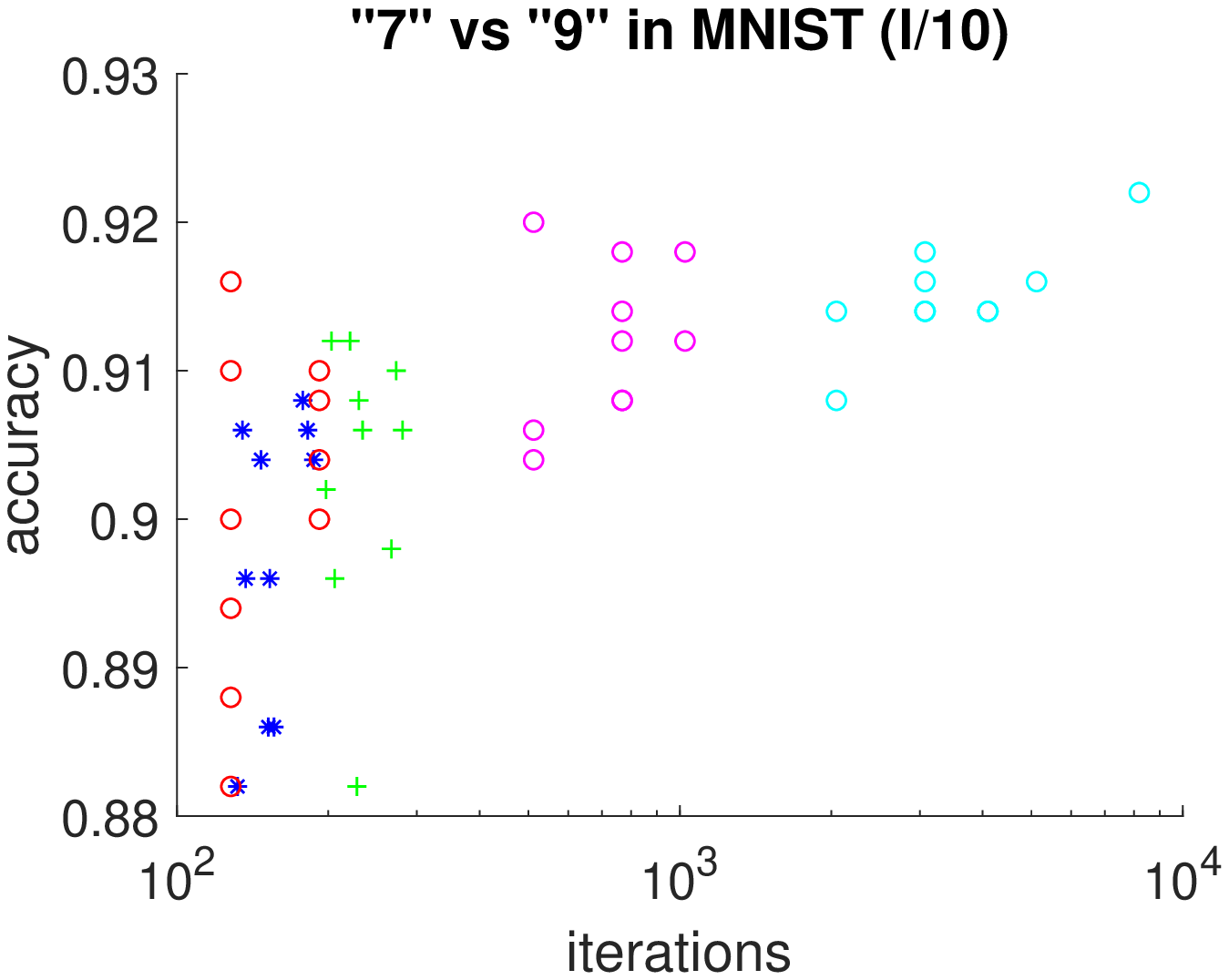} \qquad     \includegraphics[height=2.5cm]{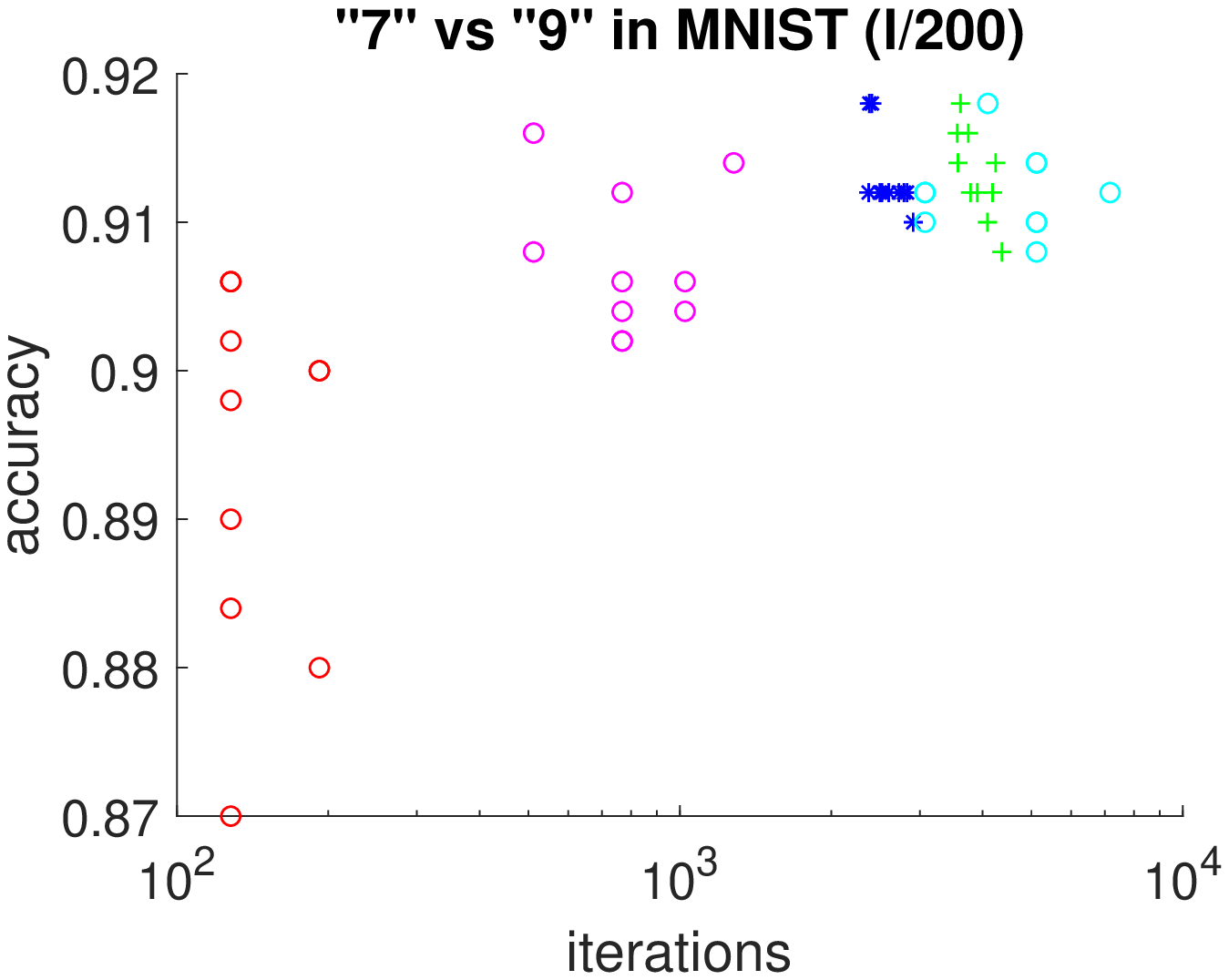} \qquad \includegraphics[height=2.5cm]{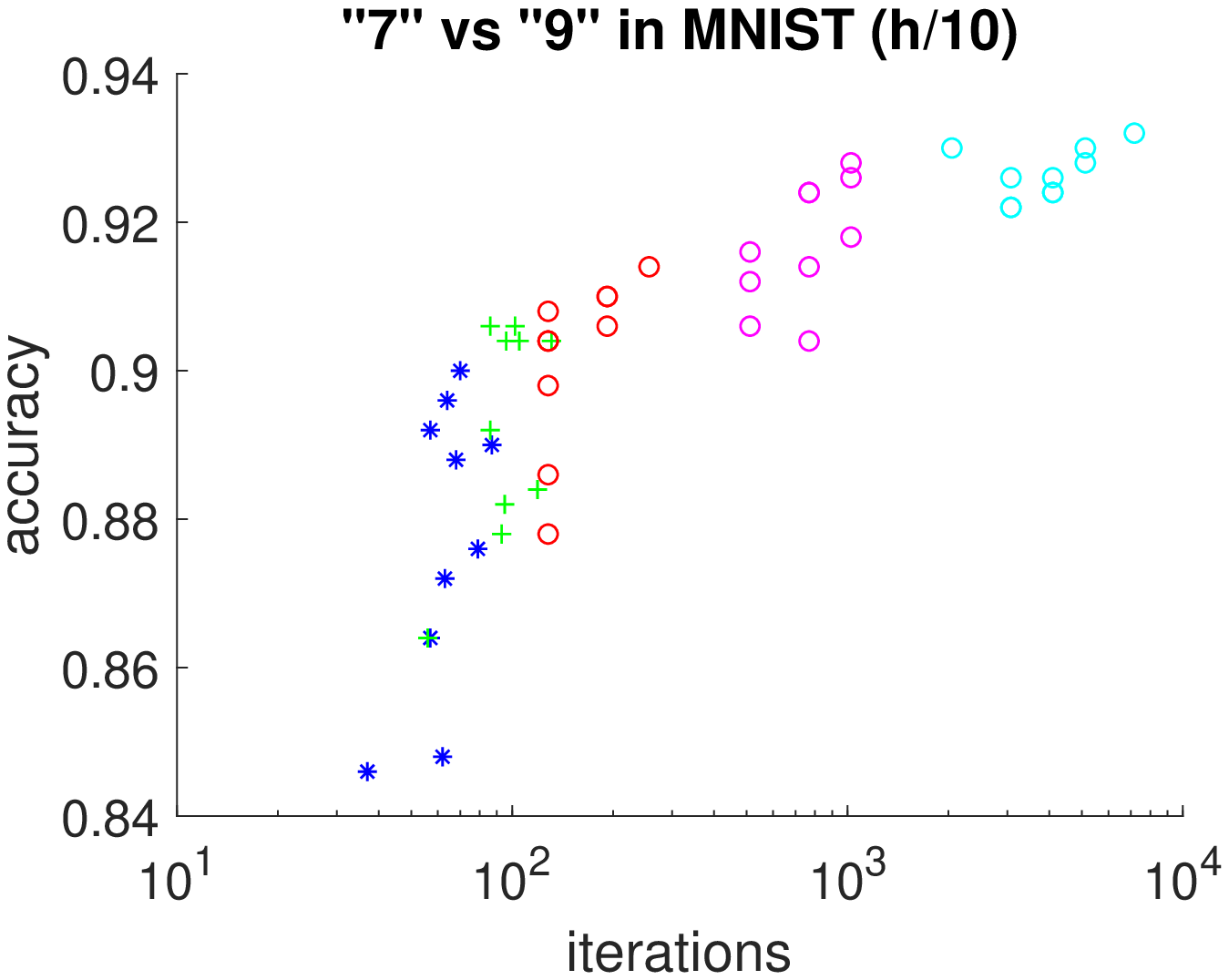} \qquad     \includegraphics[height=2.5cm]{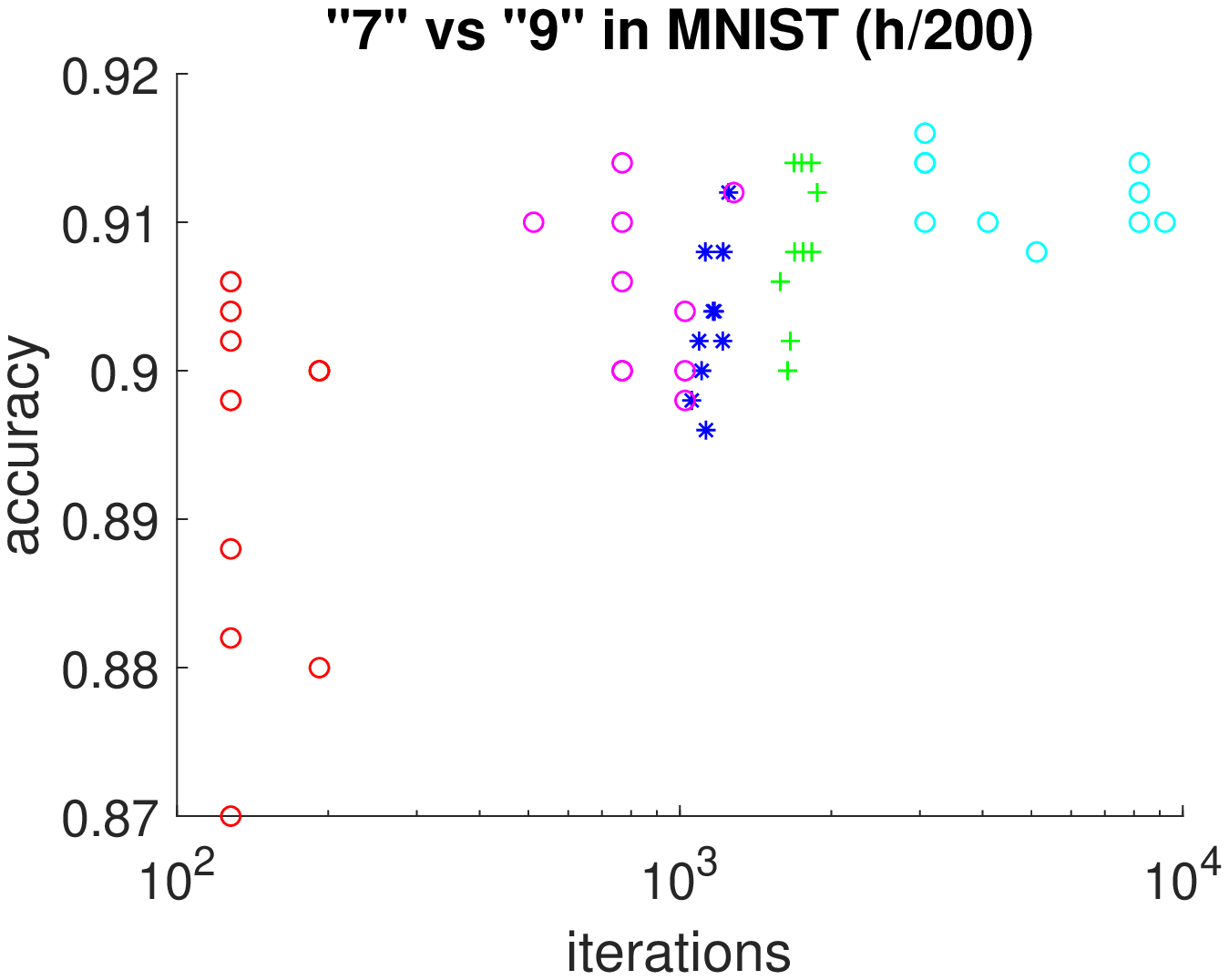} \\
    \caption{Tests on the MNIST handwritten digit data set for discerning ``1'' from ``8'' and ``7'' from ``9'' for both hinge and logistic, and for both $\tilde\alpha=1/10$ and $\tilde\alpha=1/200$.  Refer to the caption of Fig.~\ref{fig:normal_scatter} for the key to the plots.}
    \label{fig:mnist_scatter}
\end{figure}

\paragraph{CIFAR-10 image set.} We compared our termination test on the CIFAR-10 \citep{cifar10} ($d=3072$, no preprocessing of the data other than centering between the two means as described earlier).  Two trials are shown: distinguishing deer from airplanes and frogs from trucks. As in MNIST, test runs are obtained by running through the training data in different randomized orders.

\begin{figure}
    \centering
    \includegraphics[height=2.5cm]{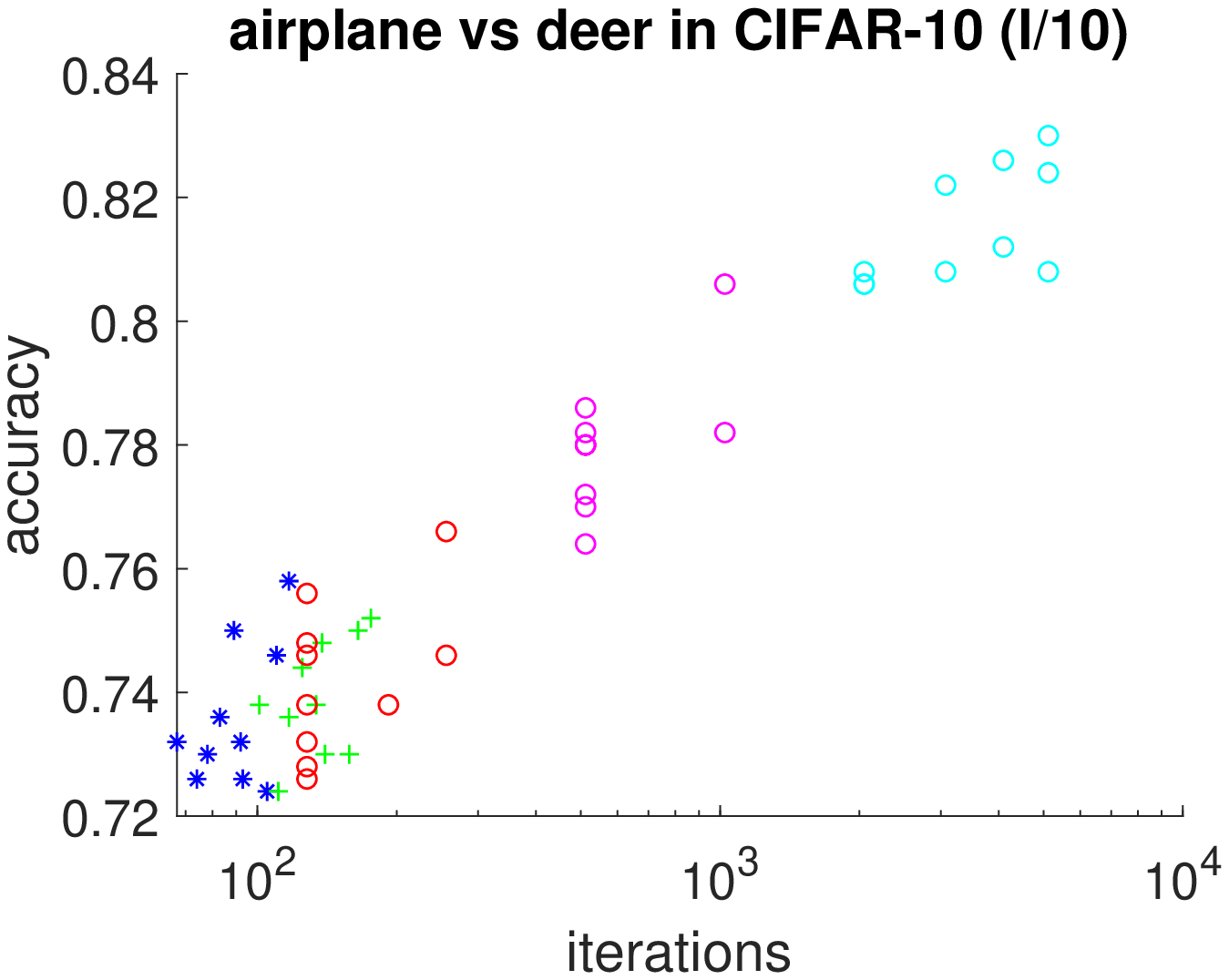} \qquad \includegraphics[height=2.5cm]{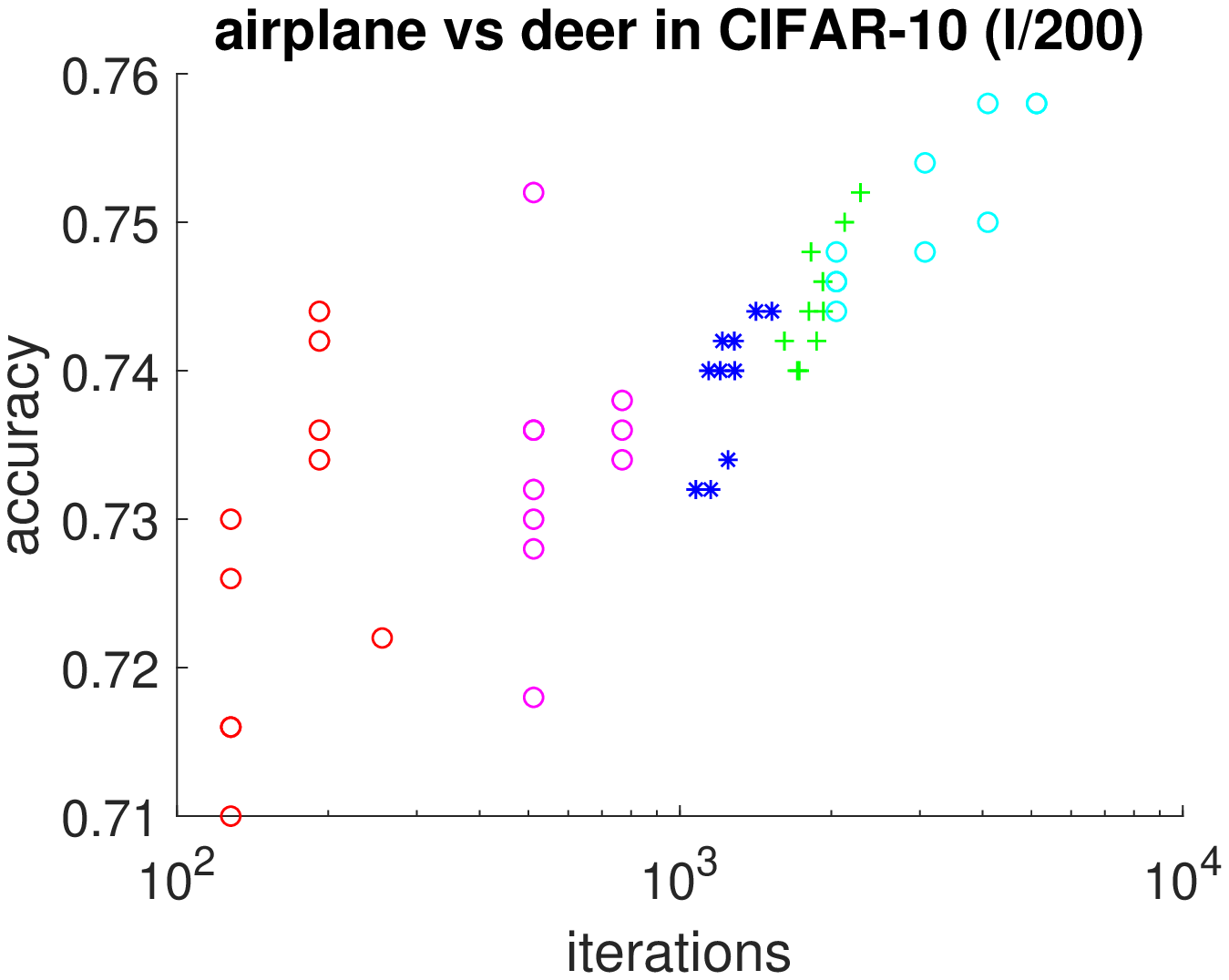} \qquad \includegraphics[height=2.5cm]{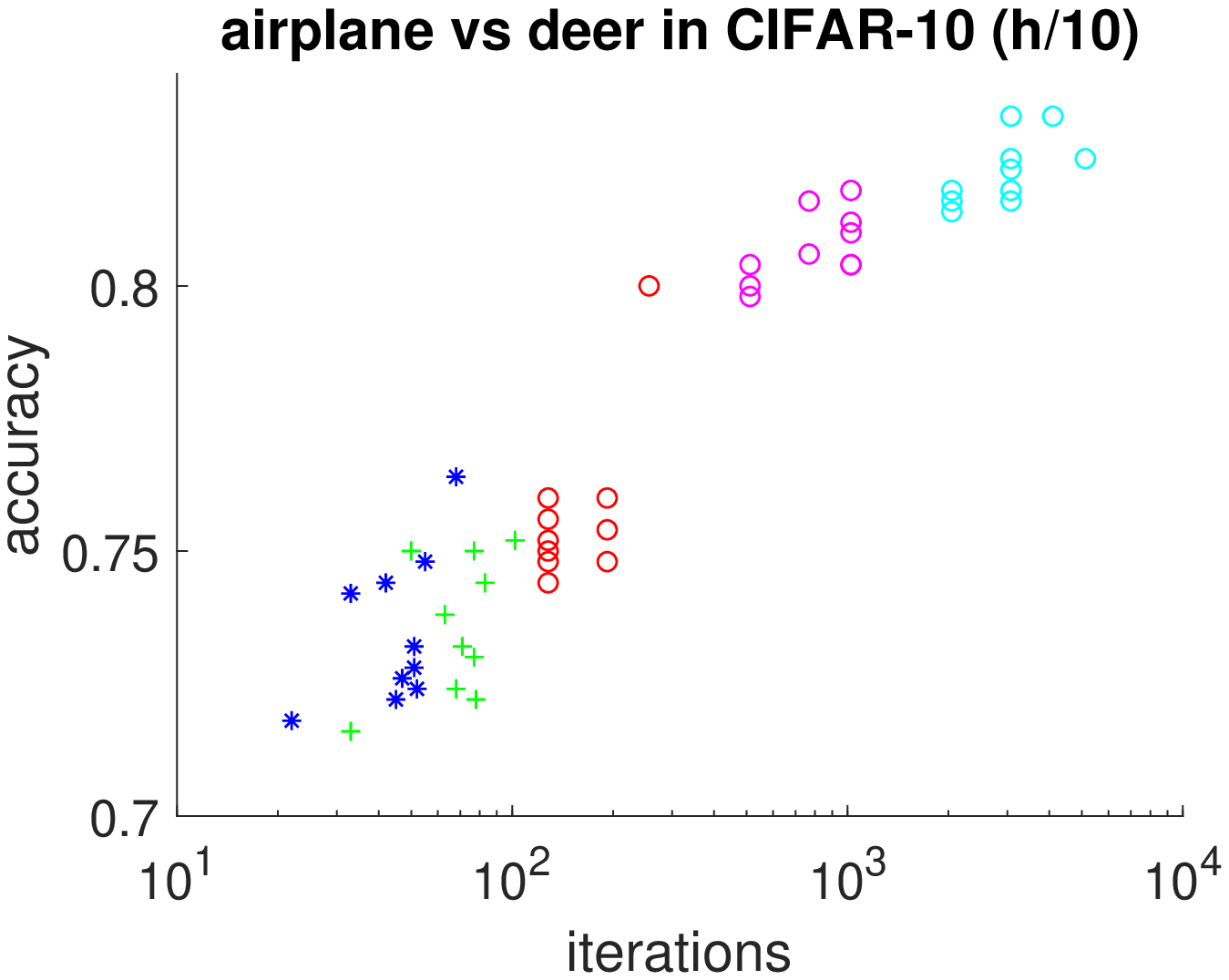} \qquad \includegraphics[height=2.5cm]{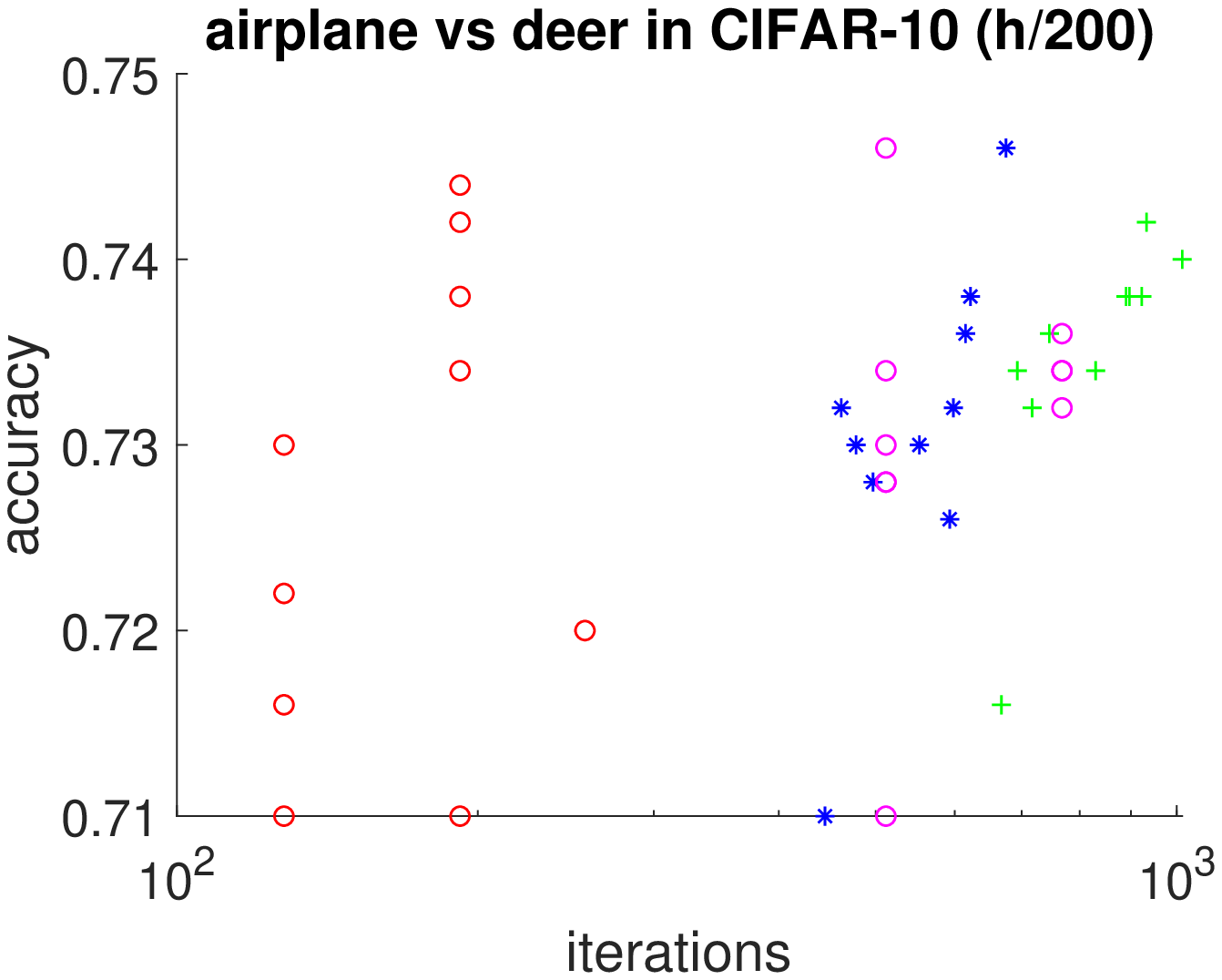} \\ \vspace{2 mm}
\includegraphics[height=2.5cm]{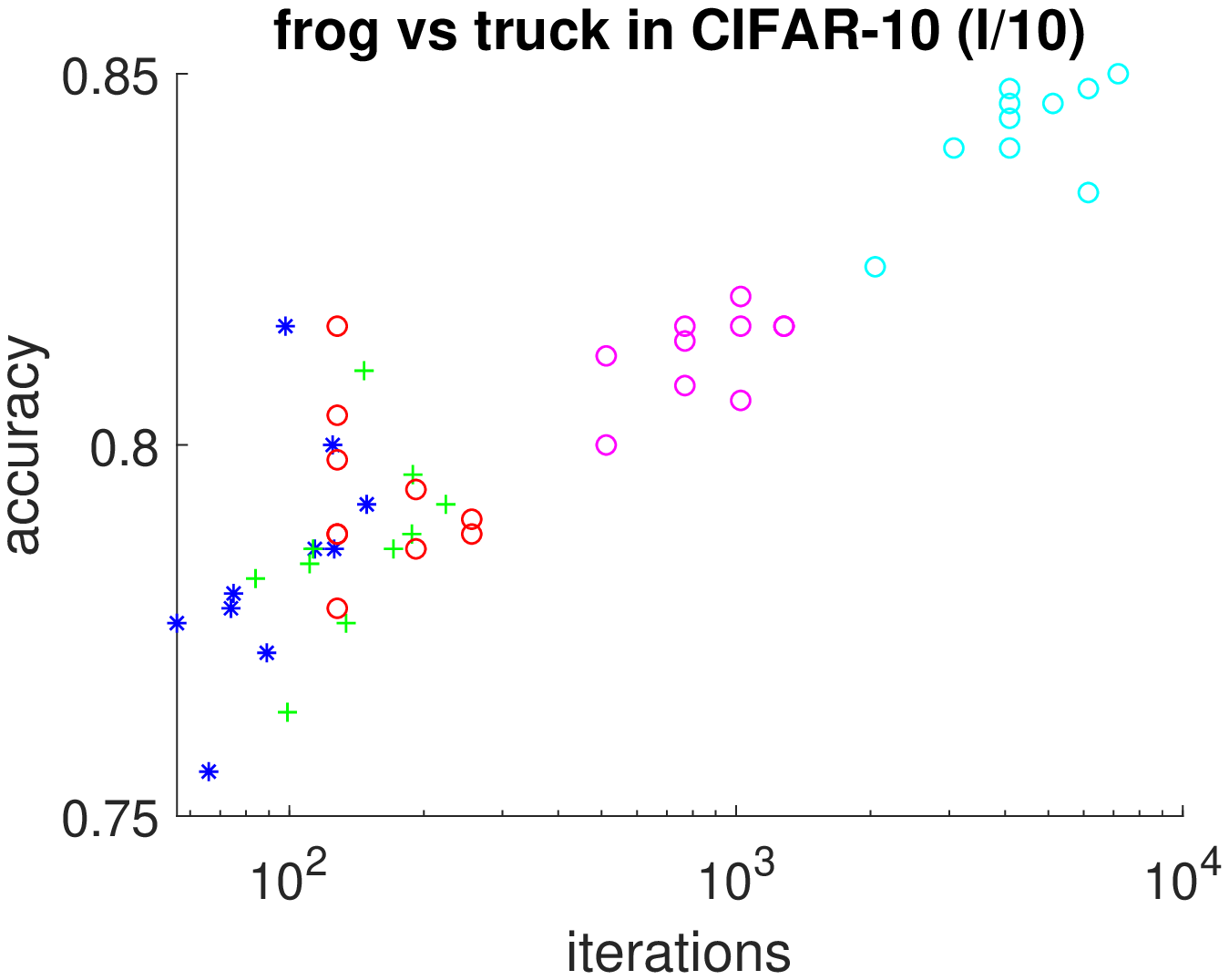} \qquad
 \includegraphics[height=2.5cm]{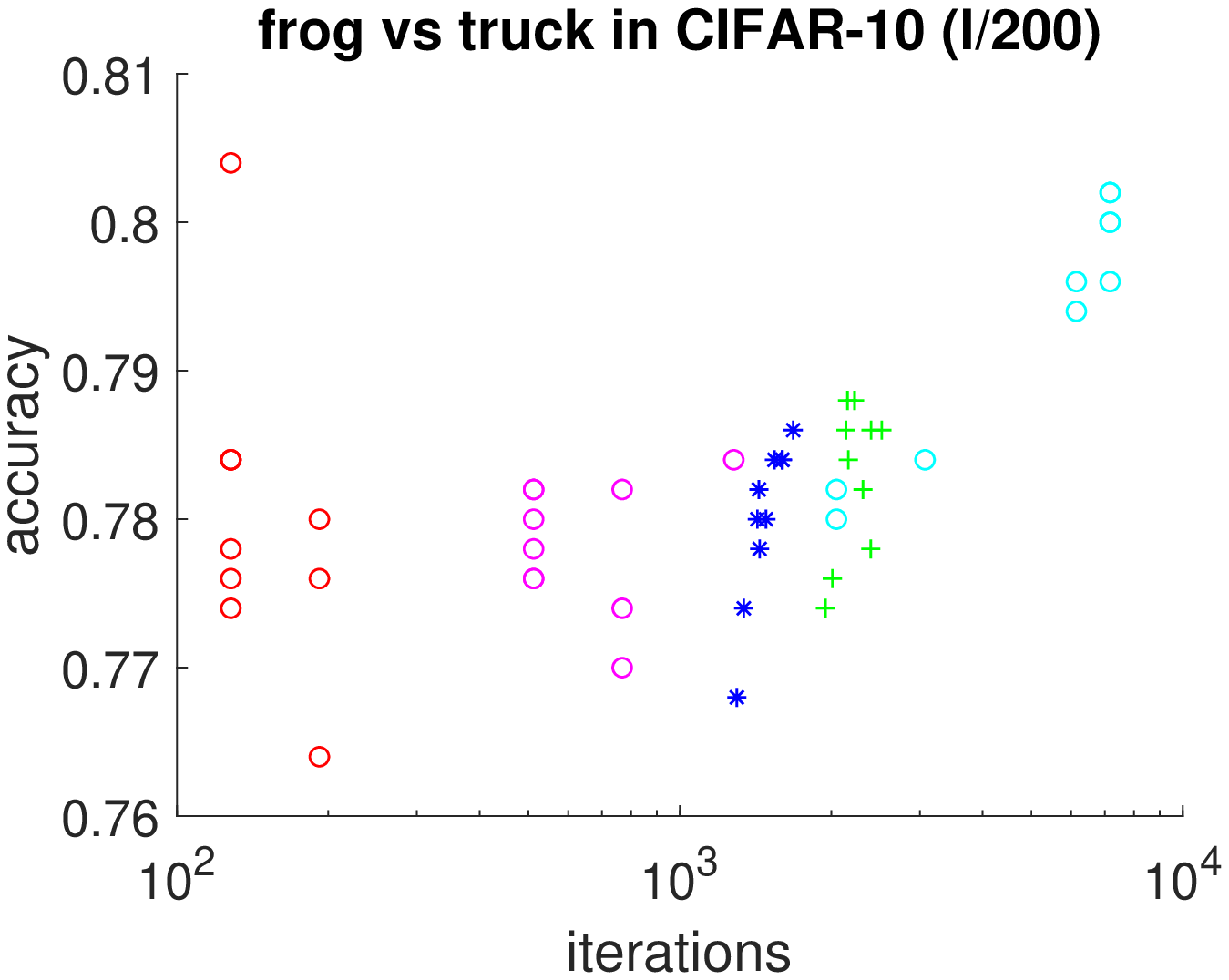} \qquad \includegraphics[height=2.5cm]{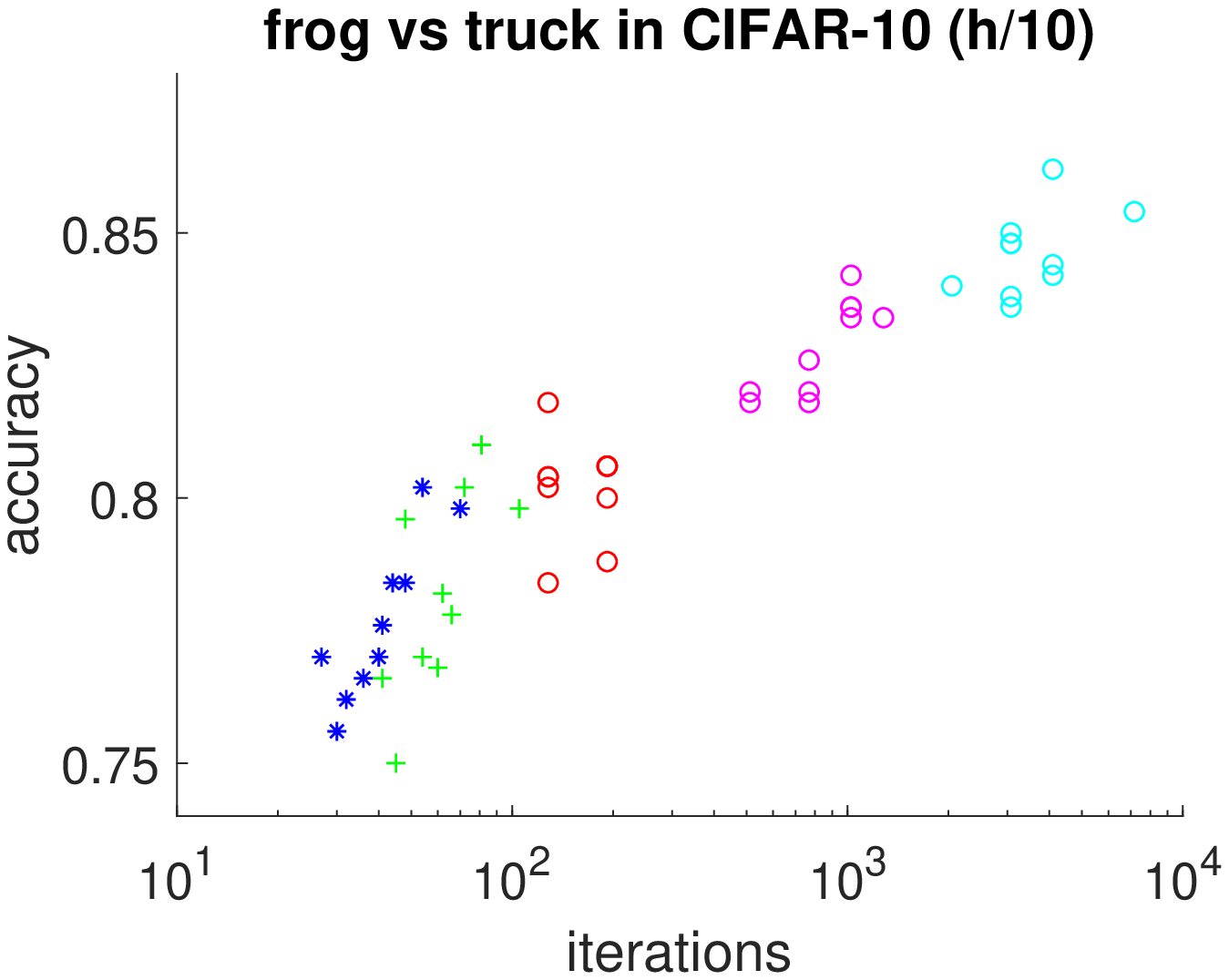} \qquad \includegraphics[height=2.5cm]{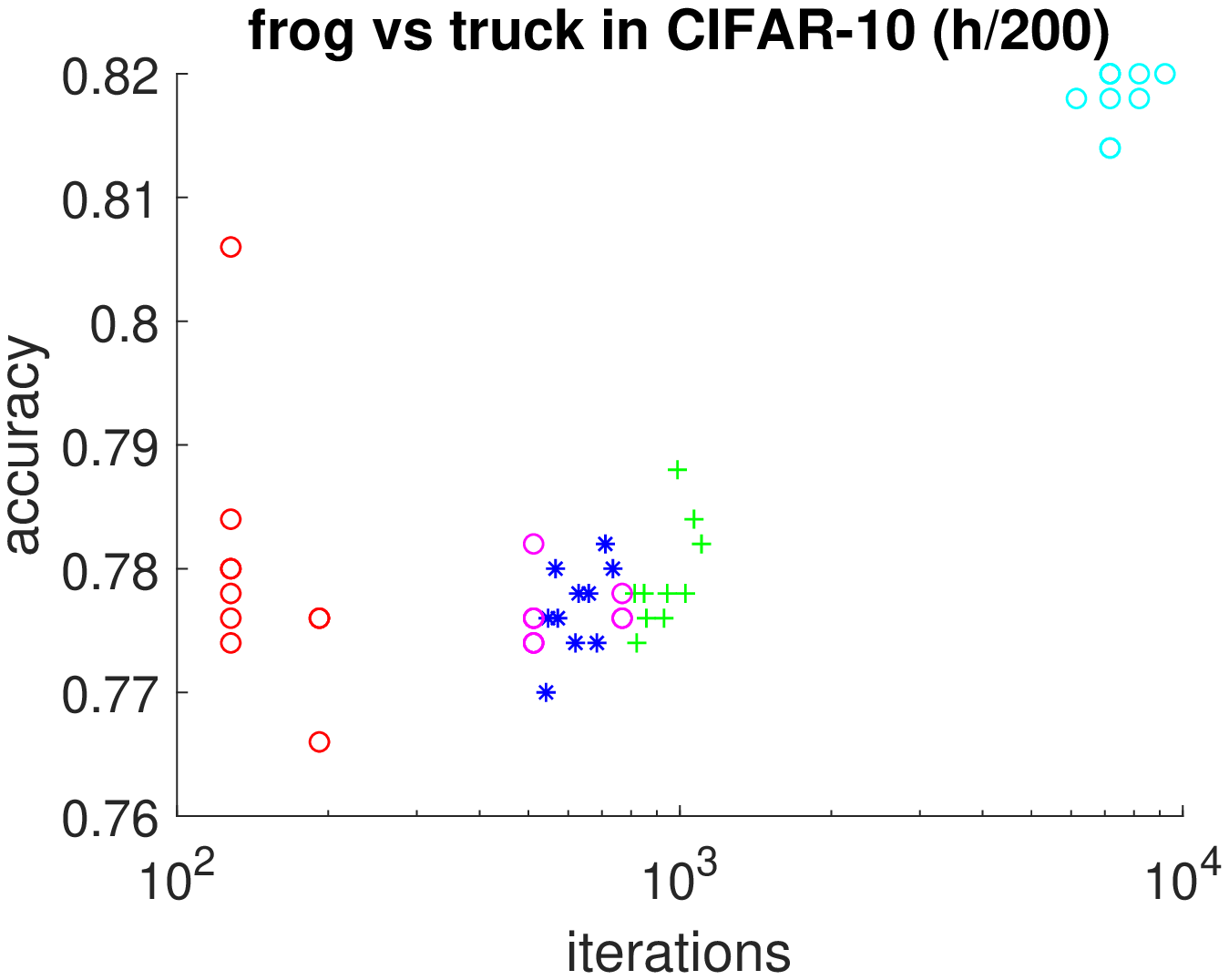}
        \caption{Tests on the CIFAR-10 image set for two tasks, for logistic and hinge losses, and for $\tilde\alpha=1/10$ and $\tilde\alpha=1/200$.   Refer to the caption of Fig.~\ref{fig:normal_scatter} for the key to the plots.  The plot in the first row, right, does not include cyan circles because the training data was exhausted before the SVS test could activate for $p=512$.}
    \label{fig:cifar_scatter}
\end{figure}

\section{Conclusions}
We have proposed a simple and computationally free termination test for SGD for binary classification, supported by both theoretical and experimental results.
The theoretical results show that the test will stop SGD after a finite time with a bound on the expected accuracy of the resulting classifier.  The bounds that we proved are weaker than what we observed in our experiments.  Therefore, the first obvious question left open by this work is whether the theoretical bounds can be improved.

In our experimental results,
the plots in Figs.~\ref{fig:normal_scatter} through \ref{fig:cifar_scatter} show a consistent pattern that
\eqref{eq: practical_termination_test} achieves low accuracy but is faster than SVS for $\tilde\alpha=1/10$, while it achieves higher accuracy with more iterations when $\tilde\alpha=1/200$.  This is useful behavior in practice, compared to SVS, since it puts the accuracy/iterations tradeoff in the hands of the user who selects the stepsize $\tilde\alpha$. Another benefit of \eqref{eq: practical_termination_test} apparent from all plots is that the number of iterations is more consistent across random trials, which is beneficial in the case that SGD is used as a subproblem of a larger computation.  

This work did not explore regularization via early stopping.  As mentioned in the introduction, experiments showed that as SGD iterations continued, the accuracy on the test set eventually levels off but does not decrease significantly, {\em i.e.}, SGD for binary classification is not prone to overfitting.  Because the test accuracy never shows marked decline, there is no opportunity for early stopping to regularize.  However, we know of other settings in which early stopping has a strong regularizing effect ({\em e.g.}, conjugate gradient iterations for image deconvolution, already known in \cite{CG_implicit_regularization}), so if \eqref{eq: practical_termination_test} is extended beyond binary classification in future work, there will likely also be an opportunity to explore regularization.

\bibliographystyle{plainnat}
\bibliography{bib}

\end{document}